\newcommand{\scr}{\mathbb}
\newcommand{\bld}{\textbf}
\newcommand{\cP}{\mathcal P}
\newcommand{\gM}{\mathfrak{M}}
\newcommand{\cF}{\mathcal F}
\newcommand{\con}{\mathfrak c}
\newcommand{\om}{\omega}
\newcommand{\ZFC}{\mathsf{ZFC}}
\newcommand{\AAA}{\protect{\mathcal A}}
\newcommand{\FF}{{\mathcal F}}
\newcommand{\MM}{{\mathcal M}}
\newcommand{\DD}{{\mathcal D}}
\newcommand{\ZZ}{{\mathcal Z}}
\newcommand{\er}{\mathbb R}
\newcommand{\supp}{{\rm supp} }
\newcommand{\conv}{{\rm conv}\,}
\newcommand{\sub}{\subseteq}
\newtheorem{thm}{Theorem}[section]
\newtheorem{lem}[thm]{Lemma}
\newtheorem{cor}[thm]{Corollary}
\newtheorem{prop}[thm]{Proposition}
\newtheorem{prob}[thm]{Problem}
\newtheorem*{clm}{Claim}
\newtheorem{claim}{Claim}
\begin{document}

\title{Sequential closure in the space of measures}

\subjclass[2010]{46E27, 28A60, 54D80} 
\keywords{sequential closure, space of measures, weak$^*$ topology, asymptotic density, Boolean algebras}
\thanks{Date: 19/03/2012}
\thanks{The first author was 
partially supported by the grant N N201 418939 from the Polish Ministry of Science and Higher Education.}

\author[Piotr Borodulin-Nadzieja]{Piotr Borodulin-Nadzieja}
\address{Instytut Matematyczny, Uniwersytet Wroc\l awski}
\email{pborod@math.uni.wroc.pl}

\author[Omar Selim]{Omar Selim}
\address{School of Mathematics, University of East Anglia}
\email{oselim.mth@gmail.com}

\begin{abstract} 
We show that there is a compact topological space carrying a measure which is not a weak$^*$ limit of finitely supported measures
but is in the sequential closure of the set of such measures. We construct compact spaces with measures of arbitrarily 
high levels of complexity in this sequential hierarchy. It follows that there is a compact space in which the sequential closure cannot be obtained in countably many steps. However, we show that this is not the case for our spaces where the sequential closure is always obtained in countably many steps.
\end{abstract}
\maketitle
\section{Introduction}\label{introduction}
\noindent
Given a compact Hausdorff topological space $K$ we denote by $\MM(K)$ the collection of all signed Borel measures on $K$. We let $C(K)$ be the collection of all real valued (bounded) continuous functions on $K$ equipped with the supremum norm.
By the Riesz representation theorem one can view $\MM(K)$ as the continuous dual of $C(K)$ and equip $\MM(K)$ with the weak$^*$ topology. Since $K$ forms a closed subspace of $\MM(K)$, the space $(\MM(K),\mbox{weak$^*$})$ is a natural topological vector space containing $K$.\\\\
We are interested in the sequential properties of $\MM(K)$. A sequence $(\mu_n)_{n}$ of measures converges to $\mu$ in the weak$^*$ topology precisely when for every $f\in C(K)$ one has
$$
\int_K f d \mu_n \rightarrow \int_K f d \mu.
$$
If in addition $K$ is zero-dimensional, for example if $K$ is the Stone space of a Boolean algebra, then the above condition is equivalent to asserting that $\mu_n(C)\rightarrow \mu(C)$ for every clopen set $C$.\\\\
Rather than $\MM(K)$ we will instead consider the collection $P(K)\subseteq \MM(K)$ of (non-negative) probability measures on $K$. The space $P(K)$ is a slightly simpler object to deal with. This is justified, since $P(K)$ is a closed subspace of $\MM(K)$ and for every $\mu\in \MM(K)$ one can find unique $\mu_1,\mu_2\in P(K)$ such that $\mu = c_1\mu_1 - c_2\mu_2$, for appropriate constants $c_1,c_2\in \scr{R}$. For details on the above discussion see \cite{Bogachev}.\footnote{However, notice that in \cite{Bogachev} and in many other sources ``weak$^*$ convergence'' is called ``weak 
convergence''}\\\\
Spaces of measures are in general very complicated. Indeed, even for well known spaces $K$ there are many simply stated problems about $P(K)$ which remain open (see for example \cite{Koszmider-survey}). Arguably the simplest measures in $P(K)$ are the so called \emph{Dirac} measures of the form 
$$
\delta_x(A) = \left\{\begin{array}{cl} 1 ,\quad&\mbox{if
$x\in A$;}\\
0,\quad&\mbox{otherwise.}\end{array}\right.
$$
for $x\in K$. The collection of Dirac measures forms a closed copy of $K$ in $P(K)$. A slightly more complicated space is the collection of measures of finite support. These are linear combinations of Dirac measures with positive coefficients that sum to $1$. If $\MM(K)$ is given the usual vector lattice structure then the measures of finite
support may be viewed as the convex hull of the collection of Dirac measures. Accordingly for $A\subseteq K$ we let $\conv(\delta_x\colon x\in A)$ be the collection of probability measures finitely supported on $A$.\\\\
We are interested in the following sequential hierarchy. For $A\subseteq K$ let  $S_0(A) = \conv(\delta_x\colon x\in A)$. For $\alpha\leq \omega_1$ if $S_\beta$ has been defined for each $\beta<\alpha$ then let 
$$
S_\alpha(A) = \{\lim_n \mu_n\colon (\forall n)(\exists \beta<\alpha)(\mu_n\in S_\beta(A))\}.
$$
We let $S(A) = S_{\omega_1}(A)$.\\\\
The collection $S_0(K)$ will be dense in $K$ but since $P(K)$ is not necessarily a sequential space in general we might not have $S_1(K) = P(K)$. To illustrate we have the following simple fact. 
\begin{prop} \label{carrier} 
Assume $\mu$ is a probability measure on $K$. 
If $\mu\in S(K)$, then $\mu$ has a separable closed subset of full measure.
\end{prop}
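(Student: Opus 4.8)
The plan is to prove by transfinite induction on $\alpha \leq \omega_1$ that every probability measure in $S_\alpha(K)$ admits a separable closed set of full measure; since $S(K) = S_{\omega_1}(K)$, the case $\alpha = \omega_1$ is exactly the proposition. The base case $\alpha = 0$ is immediate: a measure $\mu \in S_0(K) = \conv(\delta_x \colon x\in K)$ is finitely supported, so its finite support is a separable closed set carrying all of its mass.

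For the inductive step, fix $\mu \in S_\alpha(K)$ with $\alpha > 0$ and choose a sequence $(\mu_n)_n$ converging weak$^*$ to $\mu$ with each $\mu_n \in S_{\beta_n}(K)$ for some $\beta_n < \alpha$. Each $\mu_n$ is again a probability measure, since $P(K)$ is weak$^*$-closed in $\MM(K)$, so the induction hypothesis supplies a separable closed set $F_n$ with $\mu_n(F_n) = 1$. I would then pick a countable dense $D_n \subseteq F_n$ and set $F = \overline{\bigcup_n D_n}$. As the closure of a countable set, $F$ is separable and closed, and since $F_n = \overline{D_n} \subseteq F$ we get $\mu_n(F) = 1$ for every $n$. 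The only remaining task is to show $\mu(F) = 1$.

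This last point is where the real work lies. The key ingredient is the upper semicontinuity of mass on closed sets under weak$^*$ convergence: for every closed $C \subseteq K$ one has $\limsup_n \mu_n(C) \leq \mu(C)$. Applying this with $C = F$ gives $1 = \limsup_n \mu_n(F) \leq \mu(F) \leq 1$, completing the induction. To establish the inequality I would use that the measures arising here are Radon (via the Riesz representation theorem) and in particular outer regular: given a closed $C$ and any open $U \supseteq C$, normality of the compact Hausdorff space $K$ together with Urysohn's lemma produces a continuous $f$ with $\chi_C \leq f \leq \chi_U$, so that $\mu_n(C) \leq \int_K f\, d\mu_n \to \int_K f\, d\mu \leq \mu(U)$; taking the infimum over open $U \supseteq C$ and invoking outer regularity of $\mu$ yields $\limsup_n \mu_n(C) \leq \mu(C)$. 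I expect the main obstacle to be precisely this regularity step: because $K$ is not assumed metrizable, one cannot quote the metric Portmanteau theorem directly and must instead lean on the Radonness of the representing measures and on Urysohn separation.
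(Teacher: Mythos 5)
Your proof is correct. Note that the paper itself offers no proof of Proposition \ref{carrier} --- it is stated as a ``simple fact'' and left to the reader --- so there is nothing to compare against; your argument is exactly the natural one the authors presumably had in mind. The two ingredients you isolate are the right ones: the transfinite induction works uniformly at successor and limit stages (including $\alpha=\omega_1$) because by definition every element of $S_\alpha(K)$ with $\alpha>0$ is a weak$^*$ limit of a single sequence from lower levels, and the Portmanteau-type inequality $\limsup_n \mu_n(C)\leq\mu(C)$ for closed $C$ is proved correctly: your Urysohn argument gives $\limsup_n\mu_n(C)\leq\mu(U)$ for every open $U\supseteq C$, and outer regularity of the Radon measure $\mu$ (which holds for finite Radon measures on a compact Hausdorff space, metrizability being irrelevant) finishes the job. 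One cosmetic remark: the fact that each $\mu_n$ is a probability measure is cleanest to fold into the induction itself (all of $S_\alpha(K)\subseteq P(K)$ because $S_0(K)\subseteq P(K)$ and $P(K)$ is weak$^*$ closed), rather than to invoke separately, but this does not affect the validity of your argument.
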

\noindent
In particular, it follows that if $\mu$ is a strictly positive probability measure on a non-separable space $K$, 
then $\mu \in P(K)\setminus S(K)$. For example, if $\gM$ is the complete Boolean algebra of Borel subsets of the unit interval modulo Lebesgue null sets, $\lambda$ is the Lebesgue measure and $K$ is the Stone space of $\gM$ then $\lambda\in P(K)\setminus S(K)$.\\\\
Actually we can prove a more general fact here. Recall that a measure $\mu$ on a space $K$ is purely atomic if it is of the form 
\[ \mu = \sum_{n\in\om} a_n \delta_{x_n} \]
for $\{x_n\colon n\in\om\}\sub K$ and a sequence of positive real numbers $(a_n)_{n\in\om}$. Recall also that a space is extremely disconnected if closures of open sets remain open. 
\begin{prop} \label{extremely_disconnected}
If $K$ is a compact extremely disconnected Hausdorff space, then every measure in $S(K)$ is
purely atomic.
\end{prop}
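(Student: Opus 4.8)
The plan is to prove by transfinite induction on $\alpha\le\omega_1$ that every measure in $S_\alpha(K)$ is purely atomic. The base case is trivial, since a measure in $S_0(K)=\conv(\delta_x\colon x\in K)$ is finitely supported, hence purely atomic; and at a limit or successor stage the whole difficulty is contained in the following assertion, which I will prove directly: \emph{if $K$ is compact extremely disconnected and $(\mu_n)_n$ is a sequence of purely atomic probability measures on $K$ with $\mu_n\to\mu$ in the weak$^*$ topology, then $\mu$ is purely atomic.} Indeed, if $\mu=\lim_n\mu_n$ with $\mu_n\in S_{\beta_n}(K)$ and $\beta_n<\alpha$, then the inductive hypothesis guarantees that each $\mu_n$ is purely atomic, and the assertion then yields that $\mu$ is purely atomic.

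The first --- and only genuinely topological --- step is to upgrade weak$^*$ convergence to setwise convergence on the Borel $\sigma$-algebra. Here I would invoke the classical theorem of Grothendieck that $C(K)$ is a Grothendieck space whenever $K$ is compact and extremely disconnected; consequently the weak$^*$-convergent sequence $(\mu_n)$ is in fact weakly convergent in $\MM(K)=C(K)^*$. Since every bounded Borel function on $K$, and in particular every indicator $\chi_E$ of a Borel set $E$, defines a bounded linear functional on $\MM(K)$ and hence an element of $C(K)^{**}$, weak convergence gives $\mu_n(E)\to\mu(E)$ for \emph{every} Borel set $E\subseteq K$.

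With setwise convergence in hand the rest is soft measure theory. Put $\theta=\sum_n 2^{-n-1}\mu_n$; this is a probability measure, it dominates a multiple of each $\mu_n$ (so that $\mu_n\ll\theta$ for every $n$), and, being a countable convex combination of purely atomic measures, it is itself purely atomic and therefore carried by a countable set $C\subseteq K$. Because $\mu_n(E)\to\mu(E)$ for all Borel $E$ and every $\mu_n$ is countably additive and absolutely continuous with respect to $\theta$, the Vitali--Hahn--Saks theorem applies and shows that the family $\{\mu_n\}$ is uniformly absolutely continuous with respect to $\theta$: for each $\varepsilon>0$ there is $\delta>0$ with $\theta(E)<\delta\Rightarrow\mu_n(E)<\varepsilon$ for all $n$. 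Letting $n\to\infty$ this inequality is inherited by $\mu$, so $\mu\ll\theta$; as $\theta(K\setminus C)=0$ we get $\mu(K\setminus C)=0$, i.e.\ $\mu$ is carried by the countable set $C$ and is therefore purely atomic.

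I expect the main obstacle to be the passage from weak$^*$ to setwise convergence in the second paragraph, and it is precisely here that extremal disconnectedness is indispensable: the conclusion is false for general zero-dimensional compacta (on $[0,1]$ or the Cantor set one readily writes atomless measures as weak$^*$ limits of finitely supported ones), and what rules this out is the Grothendieck property of $C(K)$. Should one wish to avoid citing Grothendieck's theorem, the alternative is to establish the setwise convergence $\mu_n(E)\to\mu(E)$ by hand --- e.g.\ by a gliding-hump argument exploiting that disjoint open subsets of $K$ have disjoint clopen closures, so that any attempted escape of mass to an atomless part of the limit is detected by a single clopen set and contradicts weak$^*$ convergence. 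Once setwise convergence is available, the Vitali--Hahn--Saks step is entirely routine.
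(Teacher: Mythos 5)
Your proof is correct and takes essentially the same route as the paper's: invoke the Grothendieck property of $C(K)$ for compact extremely disconnected $K$ to upgrade weak$^*$ convergence to convergence of $\mu_n(E)$ for every Borel $E$, conclude that the limit concentrates on the countable set of atoms of the approximating measures, and close with transfinite induction. The only difference is that your Vitali--Hahn--Saks step is redundant: since $\mu_n\ll\theta$ for each $n$, setwise convergence already gives $\mu\ll\theta$ directly (or, as the paper does, simply note $\mu_n(C)=1$ for all $n$, whence $\mu(C)=\lim_n\mu_n(C)=1$).
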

\begin{proof}
Let $\mu\in S(K)$. If $K$ is compact and extremely disconnected, then $C(K)$ is Grothendieck (that is to say, if a sequence $(\mu_n)_{n\in\omega}\subseteq P(K)$ weak$^*$ converges to $\mu$ then for every bounded Borel function $f\colon K\rightarrow \scr{R}$ we have $\int_K f d {\mu_n} \rightarrow \int_K f d\mu$, see \cite[Theorem 9]{grothendieck}).
Hence, if $\mu\in P(K)$ is a limit of a sequence $(\mu_n)_{n\in\omega}$ of finitely supported measures and
$F = \bigcup_{n\in\omega} \supp(\mu_n)$, then
\[\int_K \chi_F \ d\mu = 1. \]
Therefore, 
\[ \mu = \sum_{n\in\om} a_n \delta_{x_n} ,\] 
where $F=\{x_n\colon n\in\om\}$. Similarly, one can prove that all measures in $S(K)$ are purely atomic.
\end{proof}
\noindent
Since the Stone space of a complete Boolean algebra is extremely disconnected (\cite[Proposition 7.21]{handbook}) we can conclude
that every non-atomic measure on the Stone space of $\gM$ or indeed $\beta\omega$ is not in the sequential closure of the finitely supported measures. Recall that a measure on a Boolean algebra is non-atomic if one can find finite partitions of unity consisting of sets of arbitrarily small measure.\\\\
On the other hand we have the following examples:
\begin{itemize}
\item if $K$ is scattered or metrisable then $P(K) = S_1(K)$ (see for example \cite{Mercourakis});
\item $P(2^{\om_1}) = S_1(2^{\om_1})$ (see \cite[Theorem 2]{Losert});
\item $P(2^\con) = S_1(2^\con)$ (see \cite[491Q]{Fremlin-MT4});
\item if $K$ is Koppelberg compact (that is to say, it is a Stone space of a minimally generated Boolean algebra) then $P(K) = S(K)$ (see \cite[Theorem 5.1]{Pbn1}).
\end{itemize}
It is natural to ask to what extent can one control the equalities between the $S_\alpha(K)$ for different $\alpha$, by choosing $K$ appropriately. Recently Avil\'{e}s, Plebanek and Rodr\'{\i}guez proved the following (see \cite{APR}).
\begin{thm} \label{grzes} 
Assuming the continuum hypothesis there exists a space $K$
such that \[ S_1(K)\subsetneq S_2(K) \sub S_3(K) = P(K).\]
\end{thm}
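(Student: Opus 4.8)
The plan is to realize $K$ as the Stone space of a Boolean algebra $\alg$ built by a transfinite recursion of length $\om_1$, exploiting $\CH$ to run through all the data we must control. Working in a Stone space lets us replace integration against continuous functions by evaluation on clopen sets, so that a sequence $(\mu_n)_n$ converges to $\mu$ exactly when $\mu_n(C)\to\mu(C)$ for every clopen $C$, i.e.\ for every $a\in\alg$. The algebra will be generated over a fixed ``ground'' structure attached to $\om$ (carrying a density-like measure) by new elements added one at a time; each new generator is chosen so as to either create a prescribed convergent sequence of measures (to push a target measure into the next sequential level) or to kill a potential convergent sequence (to prevent a witness measure from being reached too early). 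The whole difficulty is to arrange the bookkeeping so that these two kinds of requirements never conflict.

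Next, for the strict inequality $S_1(K)\subsetneq S_2(K)$, I would isolate one distinguished measure $\mu$ --- morally a density measure obtained as an iterated average --- and prove two things about it. On the positive side, $\mu\in S_2(K)$: I would exhibit an outer sequence $(\mu_n)_n$ weak$^*$ converging to $\mu$ in which each $\mu_n$ is itself the limit of a sequence of finitely supported measures, so $\mu_n\in S_1(K)$ and hence $\mu=\lim_n\mu_n\in S_2(K)$; the inner sequences come directly from the generators added during the construction. On the negative side, $\mu\notin S_1(K)$: assuming for contradiction that a single sequence $(\nu_k)_k$ of finitely supported measures converged to $\mu$, I would examine the supports $\bigcup_k\supp(\nu_k)$ and use the combinatorial property engineered into $\alg$ to produce a clopen set $C$ separating $\liminf$ and $\limsup$ of $\nu_k(C)$ from $\mu(C)$, contradicting convergence. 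This non-membership argument is the delicate combinatorial core, because it must survive all later stages of the recursion.

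For the upper bound $S_3(K)=P(K)$, I would show that every probability measure is reachable in three steps. The strategy is a decomposition: using Proposition~\ref{carrier} one may restrict attention to measures concentrated on separable closed sets, and the structure of $\alg$ should let one write an arbitrary $\lambda\in P(K)$ as a weak$^*$ limit of measures each of which lives on a ``small'' clopen piece where the analysis of the previous paragraph applies, placing each piece in $S_2(K)$ and hence $\lambda\in S_3(K)$. Here $\CH$ enters again: the recursion must have visited every measure in a suitably dense family (there are at most $\con=\om_1$ many relevant ones), guaranteeing that the sequences needed to reach $\lambda$ by level $3$ were actually inserted into the algebra.

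The main obstacle, as signalled above, is the tension between the two halves. To separate $S_1(K)$ from $S_2(K)$ one needs a measure that is genuinely hard to approximate by a single sequence of finitely supported measures, which forces the algebra to be ``sequentially poor'' near $\mu$; yet to force $S_3(K)=P(K)$ one needs the algebra to be ``sequentially rich'' enough that three steps suffice for every measure. Reconciling these demands inside one $\om_1$-length recursion --- adding just enough convergent sequences to cover all measures by step $3$ while never inadvertently producing a step-$1$ approximation to the witness $\mu$ --- is where the real work lies, and is exactly what $\CH$ is used to manage.
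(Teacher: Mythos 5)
Your proposal cannot be compared with an in-paper argument, because the paper does not prove Theorem~\ref{grzes} at all: it is quoted from Avil\'es--Plebanek--Rodr\'{\i}guez \cite{APR}. Judged on its own, what you have written is a strategy outline, not a proof, and the essential content is missing at every point where the theorem is actually hard. You never specify the combinatorial property of the algebra $\alg$ that would make the distinguished measure $\mu$ fail to be in $S_1(K)$, you never say how adding a generator ``kills'' a prospective convergent sequence of finitely supported measures, and you never explain why the killing requirements are consistent with the requirements that push measures into $S_2(K)$ and $S_3(K)$. You flag this yourself (``this is where the real work lies''), but that tension \emph{is} the theorem; a plan that defers it has established nothing.

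Moreover, the two steps you do commit to are unjustified or used backwards. First, the $\CH$ bookkeeping: the algebra has cardinality $\om_1=\con$, but $P(K)$ can have cardinality $2^{\con}$, so an $\om_1$-length recursion cannot ``visit'' every measure; your parenthetical claim that only $\con$ many measures are ``relevant'' is precisely what needs proof, and it requires a structural reason (not enumeration) why controlling $\om_1$ many objects suffices to place \emph{all} of $P(K)$ in $S_3(K)$. Second, Proposition~\ref{carrier} is a necessary condition for membership in $S(K)$ --- it is an obstruction, not a reduction tool. It cannot be used to ``restrict attention'' to measures with separable supports when proving $S_3(K)=P(K)$; rather, it forces your construction to guarantee that \emph{every} measure on $K$ has a separable closed set of full measure, since otherwise $S(K)\neq P(K)$ outright (exactly this failure occurs for the spaces built in this paper, which contain clopen copies of $\beta\om$ and hence, by Proposition~\ref{extremely_disconnected}, satisfy $S(K^{(\alpha)})\neq P(K^{(\alpha)})$). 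Neither issue is cosmetic: together they show the proposal has no route to either the lower bound $S_1(K)\subsetneq S_2(K)$ in the presence of the later stages, or to the upper bound $S_3(K)=P(K)$.
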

\noindent
The aim of this article is to prove the following.
\begin{thm}\label{mainfromone} For each $1\leq \alpha<\omega_1$ there exists a Stone space (compact Hausdorff zero-dimensional) space $K^{(\alpha)}$ and a measure $\mu^{(\alpha)}$ such that
	\begin{itemize}
 \item[(1)] $\mu^{(\alpha)}\in S_\alpha(K^{(\alpha)})\setminus \bigcup_{\beta<\alpha} S_\beta(K^{(\beta)})$
 \item[(2)] $S(K^{(\alpha)}) = S_{\alpha+1}(K^{(\alpha)})$.
\end{itemize}
\end{thm}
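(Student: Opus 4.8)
The plan is to realise each $\mu^{(\alpha)}$ as an $\alpha$-fold iterated averaging (density) measure on the Stone space of a carefully chosen countable Boolean algebra, set up uniformly via a well-founded tree so that each averaging step costs exactly one weak$^*$ limit. \textbf{Construction.} Fix for each $\alpha$ a countable well-founded tree $T_\alpha$ of rank $\alpha$ and let the ground set $X^{(\alpha)}$ be a countable set indexed by the branches of $T_\alpha$, with the finitely supported measures $S_0$ sitting on the corresponding points. To each node $t$ of $T_\alpha$ attach a measure $\nu_t$: to a leaf a finitely supported measure, and to an internal node of rank $\gamma$ the weak$^*$ limit $\nu_t = \lim_n \tfrac1n\sum_{i<n}\nu_{t(i)}$ of the uniform averages of its first $n$ immediate successors $t(i)$. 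Let $\mathbb{A}^{(\alpha)}$ be the subalgebra of $\mathcal{P}(X^{(\alpha)})$ generated by the cones below the nodes together with the sets on which all these iterated averages are well defined, and set $K^{(\alpha)} = \Stone(\mathbb{A}^{(\alpha)})$, $\mu^{(\alpha)} = \nu_{\mathrm{root}}$. The crux of the set-up is to take $\mathbb{A}^{(\alpha)}$ small enough that the averages converge clopen-set by clopen-set, yet large enough, and in particular \emph{not} extremely disconnected, so that the non-atomic limit measures genuinely lie in the sequential closure and Proposition \ref{extremely_disconnected} poses no obstruction.

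\textbf{Upper bound.} That $\mu^{(\alpha)}\in S_\alpha(K^{(\alpha)})$ I would prove by induction along the tree: the measure $\nu_t$ at a node of rank $\gamma$ lies in $S_\gamma(K^{(\alpha)})$, since by definition it is the weak$^*$ limit of the averages of its successors, whose ranks are strictly below $\gamma$; the required convergence is exactly the pointwise convergence of the partial averages on each clopen set, guaranteed by the choice of $\mathbb{A}^{(\alpha)}$. Evaluating at the root gives $\mu^{(\alpha)}\in S_\alpha(K^{(\alpha)})$.

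\textbf{Lower bound -- the main obstacle.} The heart of the theorem is $\mu^{(\alpha)}\notin S_\beta(K^{(\alpha)})$ for every $\beta<\alpha$, and this is where essentially all the work lies. I would attach to each element of $S(K^{(\alpha)})$ a sequential rank (the least $\gamma$ with membership in $S_\gamma$) and show, by induction, that the averaging measure at a node of tree-rank $\gamma$ has sequential rank exactly $\gamma$. The genuine difficulty is that $\mu^{(\alpha)}$ spreads its mass over the infinitely many successor cones of the root with \emph{no single cone carrying positive mass}, so there is no cheap localisation: a single weak$^*$-convergent sequence $(\rho_n)$ of rank $<\alpha$ could a priori imitate $\mu^{(\alpha)}$ by diffusing its own mass. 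To rule this out I would prove a combinatorial characterisation of the convergent sequences of $\mathbb{A}^{(\alpha)}$ describing how the mass of $\rho_n$ must eventually be allocated along $T_\alpha$, show that producing in the limit the exact-rank averaging measure in a subtree forces the tail of $(\rho_n)$ to carry rank at least that of the subtree, and then diagonalise this requirement across all subtrees against the well-foundedness of $T_\alpha$ to force rank $\geq\alpha$. Isolating the correct rank invariant -- one that is monotone under conditioning and strictly increasing at each averaging node despite the mass-spreading -- is the main obstacle.

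\textbf{Stabilisation.} Finally $S_{\alpha+1}(K^{(\alpha)}) = S(K^{(\alpha)})$ follows from the same rank analysis. Since $T_\alpha$ has rank $\alpha$, every convergent sequence drawn from $S_{\alpha+1}(K^{(\alpha)})$ has a limit whose witnessing iterated averages factor through $T_\alpha$, hence is again reached by stage $\alpha+1$; that is, $S_{\alpha+1}(K^{(\alpha)})$ is already sequentially closed, and therefore equals $S(K^{(\alpha)})$.
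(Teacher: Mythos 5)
Your proposal is a plan rather than a proof: the two statements that constitute the theorem are exactly the ones you leave open. For the lower bound $\mu^{(\alpha)}\notin\bigcup_{\beta<\alpha}S_\beta(K^{(\alpha)})$ you say you would find a rank invariant that is ``monotone under conditioning and strictly increasing at each averaging node'' and you concede that isolating it is the main obstacle --- that invariant \emph{is} the theorem, and you do not produce it. For the stabilisation $S(K^{(\alpha)})=S_{\alpha+1}(K^{(\alpha)})$ you assert that any limit of a sequence from $S_{\alpha+1}(K^{(\alpha)})$ has ``witnessing iterated averages factoring through $T_\alpha$''; this is unjustified and, as stated, false in spirit: a weak$^*$ limit of measures on $\mathrm{Stone}(\mathbb{A}^{(\alpha)})$ is just some Radon measure, its mass can sit on ultrafilters unrelated to branches of your tree, and nothing forces it to be an iterated average. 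There is also a concrete defect in the construction itself: the family of sets on which an averaging limit exists is not closed under intersection (already for ordinary asymptotic density, the family $\mathcal{D}$ of sets with density is not an algebra), so the algebra ``generated by the cones together with the sets on which all these iterated averages are well defined'' contains sets on which your averages do \emph{not} converge, and $\mu^{(\alpha)}$ is not even well defined on it. The paper avoids this by generating with the density-zero ideal together with a lifted copy $\varphi(\mathfrak{M})$ of the measure algebra, not with all convergence sets.

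For comparison, the paper's construction is also a recursive gluing along a partition $(B_n)_n$ of $\omega$ (so your tree picture is the right shape, and your upper-bound induction matches the paper's), but the invariant carried through the induction in Theorem \ref{Main} is threefold: $\mu\in S_\alpha(\omega)$; $\mu$ is \emph{orthogonal} to every member of the lower classes $S_\beta(K)$ (which gives the lower bound at once, since no probability measure is orthogonal to itself); and $(\mathfrak{M},\lambda)$ is metrically isomorphic to $(\mathbb{B},\mu)$. The third clause is what makes the second provable: it forces the support $R$ of $\mu$ to be a copy of $\mathrm{Stone}(\mathfrak{M})$ on which $\mu$ is Lebesgue measure, hence clopen regular and vanishing on countable sets, so one can find clopen sets of $\mu$-measure $>1-\delta$ avoiding the countably many support points used to build any competing measure; a separate diagonalisation (the paper's Claims 2--4) converts block-by-block orthogonality into a single set in the diagonal filter $\mathcal{F}$ that is $\mu$-large but small along a subsequence, which is what lets orthogonality survive the taking of limits. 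Your bare averaging tree builds no copy of the random algebra into the nodes, so there is no analogue of this mechanism. For part (2), the paper likewise does not argue that limits respect the tree; it proves (Theorem \ref{first}, the hardest argument in the paper, which needs Lemma \ref{newlem} on enclosing countable sets in extremely disconnected closed subspaces and the random-algebra structure of the bricks) that almost-absolute-continuity with respect to $\mu^{(\alpha)}$ passes to limits of convergent sequences, and then caps the hierarchy using Plebanek's Theorem \ref{plebanek} ($\nu\ll\mu$ and $\mu\in S_\alpha(K)$ imply $\nu\in S_{\alpha+1}(K)$) plus a block-by-block induction. Neither of your sketched routes supplies a substitute for either piece of machinery.
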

\noindent
Unfortunately, the above constructed spaces will contain clopen copies of $\beta\omega$ and so by Proposition \ref{extremely_disconnected} one will have $S(K^{(\alpha)})\neq P(K^{(\alpha)})$.\\\\
\noindent
In Section 2 we construct the sequence $(K^{(\alpha)},\mu^{(\alpha)})_{1\leq \alpha<\omega_1}$ and show that property (1) holds. In Section 3 we show that for these space property (2) holds.
\section{Non-triviality of the sequential hierarchy} \label{hierarchy}
\noindent
In this section we will prove that the sequential hierarchy of measures is non-trivial in $\ZFC$
and that measures of arbitrary sequential complexity can be constructed. We shall construct the sequence $(K^{(\alpha)},\mu^{(\alpha)})_{\alpha}$ from Theorem \ref{mainfromone} and show that for each $\alpha$ we have 
\begin{equation}\label{m1}
\mu^{(\alpha)}\in S_\alpha(K^{(\alpha)})\setminus \bigcup_{\beta<\alpha}S_\beta(K^{(\alpha)}).
\end{equation}
\noindent
We will need following observation several times for the remainder of this article, the proof of which is straightforward.
\begin{lem}\label{3} 
Let $K$ be a zero-dimensional space and let $Y$ be a non-empty clopen subset of $K$. 
If $\mu \in S_\alpha(Y)$ then $\mu'$ defined by $\mu'(A) = \mu(Y\cap A)$ is a member 
of $S_\alpha(K)$. Conversely, if $\mu\in S_\alpha(K)$ and $\mu(Y)\neq 0$ then 
$$
\frac{1}{\mu(Y)}\mu\restriction Y \in S_\alpha(Y).
$$
\end{lem}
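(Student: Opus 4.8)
The plan is to prove both directions by induction on $\alpha$, exploiting the fact that in a zero-dimensional space weak$^*$ convergence is detected clopen set by clopen set, so that the relevant integrals become evaluations on clopen sets $C$, which interact cleanly with the fixed clopen set $Y$.

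For the forward direction I would first handle the base case $\alpha=0$. If $\mu\in S_0(Y)=\conv(\delta_y\colon y\in Y)$ then $\mu$ is finitely supported in $Y$, and the extension $\mu'(A)=\mu(Y\cap A)$ is just the same finite convex combination of Dirac measures, viewed now as living on $K$; since each $y\in Y\subseteq K$ gives $\delta_y\in S_0(K)$, we get $\mu'\in S_0(K)$. For the inductive step, suppose $\mu\in S_\alpha(Y)$, so there are $\mu_n\in S_{\beta_n}(Y)$ with $\beta_n<\alpha$ and $\mu_n\to\mu$ in the weak$^*$ topology of $P(Y)$. By the inductive hypothesis each extension $\mu_n'$ lies in $S_{\beta_n}(K)$. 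It then remains to check $\mu_n'\to\mu'$ in $P(K)$: for a clopen $C\subseteq K$ we have $\mu_n'(C)=\mu_n(Y\cap C)$ and $Y\cap C$ is clopen in $Y$ (since $Y$ is clopen in $K$), so $\mu_n(Y\cap C)\to\mu(Y\cap C)=\mu'(C)$ by the convergence $\mu_n\to\mu$. Hence $\mu'=\lim_n\mu_n'\in S_\alpha(K)$, completing the induction.

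For the converse I again argue by induction. Write $\nu=\tfrac{1}{\mu(Y)}\,\mu\restriction Y$ for the normalised restriction, which is a genuine probability measure on $Y$ because $\mu(Y)\neq 0$ and $\mu\restriction Y$ is a finite nonnegative measure. In the base case $\mu\in S_0(K)$ is a finite convex combination $\sum_i a_i\delta_{x_i}$; restricting to $Y$ kills the terms with $x_i\notin Y$ and, after renormalising by $\mu(Y)=\sum_{x_i\in Y}a_i$, yields a convex combination of Diracs at points of $Y$, so $\nu\in S_0(Y)$. For the inductive step take $\mu_n\in S_{\beta_n}(K)$ with $\beta_n<\alpha$ and $\mu_n\to\mu$. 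Since $Y$ is clopen, $\mu_n(Y)\to\mu(Y)\neq 0$, so for all large $n$ we have $\mu_n(Y)\neq 0$ and may form $\nu_n=\tfrac{1}{\mu_n(Y)}\,\mu_n\restriction Y\in S_{\beta_n}(Y)$ by the inductive hypothesis; the finitely many exceptional $n$ may simply be discarded or replaced by an arbitrary fixed element of $S_0(Y)$. One then verifies $\nu_n\to\nu$ in $P(Y)$ by testing on clopen $D\subseteq Y$: indeed $\nu_n(D)=\mu_n(D)/\mu_n(Y)$ and $\nu(D)=\mu(D)/\mu(Y)$, and since $D$ is clopen in $Y$ and $Y$ is clopen in $K$, the set $D$ is clopen in $K$, so both numerator $\mu_n(D)\to\mu(D)$ and denominator $\mu_n(Y)\to\mu(Y)\neq 0$ converge, giving $\nu_n(D)\to\nu(D)$.

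The only genuinely delicate point, and the one I would write out most carefully, is the convergence-of-ratios step in the converse: one must use that the denominators $\mu_n(Y)$ are eventually bounded away from $0$ (which follows from $\mu_n(Y)\to\mu(Y)>0$) so that division is continuous along the sequence, and one must confirm that testing on clopen subsets of $Y$ suffices for weak$^*$ convergence in $P(Y)$, which is exactly the zero-dimensional criterion recalled in the introduction. Everything else is a routine transfer between clopen sets of $K$ and clopen sets of $Y$, made possible precisely because $Y$ is clopen rather than merely closed; this is where the hypothesis is essential, since a clopen set of $Y$ is then automatically clopen in $K$ and vice versa for subsets of $Y$.
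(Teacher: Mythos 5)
Your proof is correct: the transfinite induction on $\alpha$, transferring convergence clopen set by clopen set between $Y$ and $K$ (with the eventual non-vanishing of $\mu_n(Y)$ handled in the converse), is exactly the routine argument intended here. The paper in fact omits the proof altogether, remarking only that it is straightforward, so your write-up is essentially the canonical filling-in of that gap.
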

\noindent
As a corollary to (\ref{m1}) we have the following.
\begin{cor} \label{omega1} There exists a Stone space $K^{(\omega_1)}$ such that for every $1\leq \alpha<\omega_1$ we have $S_\alpha(K)\setminus \bigcup_{\beta<\alpha}S_\beta(K)\neq \emptyset$. 
\end{cor}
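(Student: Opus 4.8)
The plan is to amalgamate all of the spaces $K^{(\alpha)}$, $1\le\alpha<\omega_1$, into a single Stone space in which each $K^{(\alpha)}$ sits as a clopen piece, and then to transport the witnessing measures $\mu^{(\alpha)}$ across via Lemma \ref{3}. Concretely, I would take $K^{(\omega_1)}$ to be the one-point compactification of the topological sum $\bigsqcup_{1\le\alpha<\omega_1}K^{(\alpha)}$, adjoining a single point $\infty$ at infinity. The point of this choice is that each summand $K^{(\alpha)}$ will be clopen, which is exactly the hypothesis needed to run Lemma \ref{3} in both directions.

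First I would check that $K^{(\omega_1)}$ really is a Stone space. Each $K^{(\alpha)}$ is compact Hausdorff, so the sum is locally compact Hausdorff and hence its one-point compactification is compact Hausdorff. Each $K^{(\alpha)}$ is open in the sum, and its complement $\{\infty\}\cup\bigsqcup_{\gamma\ne\alpha}K^{(\gamma)}$ is open in $K^{(\omega_1)}$, so $K^{(\alpha)}$ is clopen. For zero-dimensionality, points lying in some $K^{(\alpha)}$ inherit a clopen neighbourhood basis from the Stone space $K^{(\alpha)}$, while at $\infty$ the sets $\{\infty\}\cup\bigsqcup_{\alpha\notin F}K^{(\alpha)}$, for $F$ ranging over finite sets of indices, form a clopen neighbourhood basis: each has clopen complement $\bigsqcup_{\alpha\in F}K^{(\alpha)}$. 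Thus $K^{(\omega_1)}$ is compact, Hausdorff and zero-dimensional, i.e.\ a Stone space.

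Next, fixing $1\le\alpha<\omega_1$, I would consider the measure $\tilde\mu^{(\alpha)}$ on $K^{(\omega_1)}$ given by $\tilde\mu^{(\alpha)}(A)=\mu^{(\alpha)}(A\cap K^{(\alpha)})$, which is a probability measure since $\tilde\mu^{(\alpha)}(K^{(\omega_1)})=\mu^{(\alpha)}(K^{(\alpha)})=1$. Because $K^{(\alpha)}$ is clopen in $K^{(\omega_1)}$ and $\mu^{(\alpha)}\in S_\alpha(K^{(\alpha)})$, the forward direction of Lemma \ref{3} gives $\tilde\mu^{(\alpha)}\in S_\alpha(K^{(\omega_1)})$. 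Suppose toward a contradiction that $\tilde\mu^{(\alpha)}\in S_\beta(K^{(\omega_1)})$ for some $\beta<\alpha$. Since $\tilde\mu^{(\alpha)}(K^{(\alpha)})=1\ne 0$, the converse direction of Lemma \ref{3} yields $\tilde\mu^{(\alpha)}\restriction K^{(\alpha)}=\mu^{(\alpha)}\in S_\beta(K^{(\alpha)})$, contradicting (\ref{m1}). Hence $\tilde\mu^{(\alpha)}\in S_\alpha(K^{(\omega_1)})\setminus\bigcup_{\beta<\alpha}S_\beta(K^{(\omega_1)})$, so this difference is non-empty for every $1\le\alpha<\omega_1$, as required.

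The only mildly delicate point, and the step I would take most care over, is the verification that the one-point compactification of an \emph{uncountable} disjoint sum is still zero-dimensional at $\infty$; here one uses that every compact subset of the sum is confined to finitely many summands, so that the clopen sets $\{\infty\}\cup\bigsqcup_{\alpha\notin F}K^{(\alpha)}$ do cofinally shrink into any given neighbourhood of $\infty$. Everything else is a direct application of Lemma \ref{3} together with the exact-complexity statement (\ref{m1}).
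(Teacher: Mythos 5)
Your proof is correct and takes essentially the same approach as the paper: the paper also forms the one-point compactification of $\bigsqcup_{\alpha<\omega_1} K^{(\alpha)}$, transports each $\mu^{(\alpha)}$ via $A\mapsto\mu^{(\alpha)}(A\cap K^{(\alpha)})$, and applies Lemma \ref{3} in both directions. The only difference is that you spell out the (correct) verification that the compactification is a Stone space, which the paper leaves implicit.
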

\begin{proof} Let $(K^{(\alpha)},\mu^{(\alpha)})_{\alpha}$ be the sequence promised by Theorem \ref{mainfromone} and let $K$ be the one-point
compactification of the disjoint union $\bigsqcup_{\alpha<\omega_1} K^{(\alpha)}$. Define $\mu'_\alpha\colon \mathrm{Clopen}(K)\rightarrow [0,1]$ by $\mu'_\alpha(A) = \mu^{(\alpha)}(A\cap K^{(\alpha)})$, then by Lemma \ref{3} for every $\alpha<\omega_1$
$$
\mu'_\alpha\in S_{\alpha}(K)\setminus \bigcup_{\beta<\alpha}S_{\alpha}(K).
$$
\end{proof}
\noindent
Let us now fix some terminology and notation. If $\scr{B}$ is a Boolean algebra then by $\mathrm{Stone}(\scr{B})$ we will mean the usual Stone space of $K$ consisting of all ultrafilters on $\scr{B}$. If $K$ is any topological space then by $\mathrm{Clopen}(K)$ and $\mathrm{Borel}(K)$ we denote the collection of clopen subsets of $K$ and the collection of Borel subsets of $K$, respectively.\\\\
We will consider subalgebras $\scr{B}$ of $\cP(\om)$ and finitely additive
probability measures on $\scr{B}$. Recall that a finitely additive measure defined on a Boolean algebra $\scr{B}$ defines a finitely additive measure on the clopen sets of $K = \mathrm{Stone}(\scr{B})$ and that this can be uniquely extended to a $\sigma$-additive Radon measure on the Baire algebra of $K$, the $\sigma$-algebra generated by $\mathrm{Clopen}(K)$. Since $K$ is compact and Hausdorff this measure uniquely extends to a $\sigma$-additive Radon measure on $\mathrm{Borel}(K)$. Once again, for details see \cite{Bogachev}.\\\\ 
Unless otherwise stated, all sequences are assumed to be indexed by $\omega$. For simplicity we will usually identify a (finitely additive) measure on $\scr{B}$ with the corresponding ($\sigma$-additive) measure on $\mathrm{Borel}(\!\mathrm{Stone}(\scr{B}))$. We will also identify elements of a Boolean algebra with the corresponding
clopen subsets of its Stone space.  Given a subalgebra $\scr{B}$ of $\cP(\omega)$, so that there is no confusion between unions in $\cP(\omega)$ and unions in $\mathrm{Stone}(\scr{B})$, if $(B_n)_{n}$ is a sequence in $\scr{B}$ then
by  $\widehat{\bigcup}_{n} B_n$ we will mean the union in $\mathrm{Stone}(\scr{B})$ of the corresponding sequence of clopen sets.  Similarly we define $\widehat{\bigcap}_n B_n$. When considering a subalgebra $\scr{B}$ of $\cP(\omega)$ such that $[\omega]^{<\omega} \subseteq \scr{B}$ we will
identify each natural number $n$ with the corresponding principle ultrafilter $\{B\in \scr{B}\colon  n\in B\}$.\\\\
Recall that a measure $\nu$ on a Boolean algebra $\scr{B}$ is \emph{orthogonal} to a measure $\mu$ on $\scr{B}$ if
for each $\delta>0$ there exists an $F\in \scr{B}$ such that $\nu(F)>1-\delta$ and $\mu(F)<\delta$. Let $\scr{A}$ and $\scr{B}$ be Boolean algebras carrying measures $\nu$ and $\mu$, respectively. We will say that $(\scr{A},\nu)$ is \emph{metrically isomorphic} to $(\scr{B},\mu)$ if there exists a map $\varphi\colon
\scr{A} \to \scr{B}$ such that the map $A\mapsto [\varphi(A)]_{\mathrm{Null}(\mu)}$ is a measure preserving isomorphism (i.e. such that $\mu([\varphi(A)]_{\mathrm{Null}(\mu)})=\nu(A)$ for each $A\in \scr{A}$).\\\\
We now describe the general construction by which we shall obtain the sequence $(K^{(\alpha)},\mu^{(\alpha)})_{\alpha}$ of Theorem \ref{mainfromone}. Fix once and for all a sequence $(B_n)_n$ consisting of infinite and pairwise disjoint subsets of
$\omega$ such that $\bigcup_{n}B_n = \omega$. For each $n$ let $B_n = \{x^n_0<x^n_1<\cdots\}$. For $A\in \cP(\omega)$ and $n\in\omega$ let $A^n = \{x^n_i\colon i\in A\}$ and for $\AAA\subseteq \cP(\omega)$ let $\AAA^n = \{A^n\colon A\in\AAA\}$. Say for each $n$
we have a subalgebra $\scr{B}_n$ of $\cP(\omega)$ carrying a measure $\mu_n$ such that $(\gM,\lambda)$ is metrically isomorphic to each $(\scr{B}_n,\mu_n)$, witnessed by $\varphi_n\colon \gM \to \scr{B}_n$. Recall from Section \ref{introduction} that
$\gM$ is the Boolean algebra of Borel subsets of the unit interval modulo Lebesgue null sets and that $\lambda$ is the Lebesgue measure on $\gM$. For each $n$ let $\mu'_n\colon \scr{B}_n^n\rightarrow \scr{R}$ be the measure defined by 
$$
\mu'_n(A) = \mu_n(\{i\colon x^n_i\in A\}).
$$
Let 
$$
\cF = \{A\subseteq \omega\colon A\cap B_n\in\scr{B}_n^n\mbox{ and } \lim_{n\rightarrow \infty}\mu'_n(A\cap B_n) = 1\}
$$
and 
$$
\cF' = \{\bigcup_{n\in\omega} \varphi_{n}(M)^n\colon M\in \gM\}.
$$
Finally, let $\scr{B}$ be the subalgebra of $\cP(\omega)$ generated by $\cF\cup \cF'$ and let $\mu\colon \scr{B}\rightarrow \scr{R}$ be the measure 
$$
\mu(A)= \lim_{n\rightarrow \infty}\mu'_n(A\cap B_n).
$$
We will call $(\scr{B},\mu)$ the pair constructed \emph{canonically} from $(\scr{B}_n,\mu_n)_n$. The crucial information about this construction is summarised by the following theorem. 
\begin{thm} \label{Main}
For each $n$ let $\scr{B}_n$ be a subalgebra of $\cP(\omega)$ carrying a measure $\mu_n$ and let $K_n = \mathrm{Stone}(\scr{B}_n)$. Let $(\alpha_n)_{n}$ be a non-decreasing sequence of non-zero countable ordinals with $\alpha = \sup_{n} \alpha_n$. Assume that for each $n$
\begin{itemize}
 \item $\mu_n \in S_{\alpha_n}(\omega)$;
 \item $\mu_n$ is orthogonal to each member of $\bigcup_{\beta<\alpha_n} S_{\beta}(K_n)$;
 \item $(\gM, \lambda)$ is metrically isomorphic to $(\scr{B}_n,\mu_n)$ witnessed by $\varphi_n$.
\end{itemize}
Let $(\scr{B},\mu)$ be the pair constructed canonically from $(\scr{B}_n,\mu_n)_n$ and let $K = \mathrm{Stone}(\scr{B})$. Then the following hold:
\begin{itemize}
 \item $\mu\in \left\{\begin{array}{cl} S_{\alpha+1}(\omega),\quad&\mbox{if $(\exists n)(\alpha = \alpha_n)$, }\\
S_\alpha(\omega),\quad&\mbox{otherwise.}\end{array}\right.$
 \item $\mu$ is orthogonal to each member of $\left\{\begin{array}{cl} \bigcup_{\beta\leq \alpha} S_{\alpha}(K),\quad&\mbox{if $(\exists n)(\alpha = \alpha_n)$,}\\
\bigcup_{\beta<\alpha} S_{\alpha}(K),\quad&\mbox{otherwise.}\end{array}\right.$
 \item $(\gM, \lambda)$ is metrically isomorphic to $(\scr{B},\mu)$.
\end{itemize}
\end{thm}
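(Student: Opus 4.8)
The plan is to verify the three conclusions in the order: metric isomorphism, sequential complexity, and finally orthogonality, with the last being the substantial part. First I would record the block geometry of $K = \mathrm{Stone}(\scr{B})$. Since each $\scr{B}_n$ contains the top and the finite sets, the set $\om\setminus B_n=\bigcup_{m\neq n}B_m$ lies in $\cF$ (its trace on each block is empty or full and $\mu'_m\to 1$), so each block $\widehat{B_n}$ is clopen, its trace algebra is $\scr{B}_n^n\cong\scr{B}_n$, and $\widehat{B_n}\cong K_n$ via the de-copying map $x^n_i\mapsto i$. Moreover $\mu(\widehat{B_n})=\lim_m\mu'_m(B_n\cap B_m)=0$, so $\mu$ charges no block and is concentrated on the corona $K\setminus\bigcup_n\widehat{B_n}$; dually, every $A\in\cF$ is $\mu$-conull. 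For the metric isomorphism I would set $\Phi(M)=\bigcup_n\varphi_n(M)^n\in\cF'$; then $\mu'_n(\Phi(M)\cap B_n)=\mu_n(\varphi_n(M))=\lambda(M)$ for all $n$, so $\mu(\Phi(M))=\lambda(M)$, and since each $\varphi_n$ is a homomorphism modulo $\mu_n$-null sets and the blocks are disjoint, $\Phi$ is a homomorphism modulo $\mu$-null sets. Surjectivity modulo null is where the first observation is used: the generators in $\cF$ are $\mu$-conull, hence equal to the unit of $\scr{B}/\mathrm{Null}(\mu)$, while the generators in $\cF'$ are exactly the $\Phi(M)$, so every finite Boolean combination of generators equals some $\Phi(M)$ modulo null. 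Thus $M\mapsto[\Phi(M)]$ is the desired measure isomorphism (and this simultaneously shows the limit defining $\mu(A)$ exists on all of $\scr{B}$).

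For the complexity of $\mu$, let $\nu_N$ be the copy of $\mu_N$ sitting in $\widehat{B_N}$, i.e. $\nu_N(A)=\mu'_N(A\cap B_N)$. By the very definition of $\mu$ one has $\nu_N(A)\to\mu(A)$ for every clopen $A\in\scr{B}$, so $\nu_N\to\mu$ in the weak$^*$ topology. To locate $\nu_N$ in the hierarchy I would check that the de-copying identification $\widehat{B_N}\cong K_N$ carries $S_\beta(\om)$ computed in $K_N$ into $S_\beta(\om)$ computed in $K$: a weak$^*$-convergent sequence in $P(K_N)$ copies to one in $P(K)$ with the copied limit, since for $A\in\scr{B}$ we have $A\cap B_N\in\scr{B}_N^N$ and testing $A$ reduces to testing the de-copy $\pi_N(A)\in\scr{B}_N$ against the original sequence. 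An induction on $\beta$ then gives $\nu_N\in S_{\alpha_N}(\om)$. Finally, if some $\alpha_n=\alpha$ then $\alpha_N=\alpha$ for all large $N$, so all $\nu_N\in S_\alpha(\om)$ and $\mu\in S_{\alpha+1}(\om)$; otherwise $\alpha_N<\alpha$ for all $N$ and $\mu=\lim_N\nu_N\in S_\alpha(\om)$.

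The orthogonality is the heart of the matter. I would prove by induction on $\beta$ (with $\beta\le\alpha$ in the first case and $\beta<\alpha$ in the second) that every $\nu\in S_\beta(K)$ is orthogonal to $\mu$; it suffices to produce a $\mu$-null $F\in\scr{B}$ with $\nu(F)>1-\delta$. For $\beta=0$ the measure $\nu$ is carried by finitely many blocks $J$, and $F=\bigcup_{n\in J}B_n$ works since $\mu(F)=0$. For the successor/limit step write $\nu=\lim_k\rho_k$ with $\rho_k\in S_{\gamma_k}(K)$ and $\gamma_k<\beta$. The key point is that in both cases $\gamma_k<\alpha_n$ for all large $n$ (in the first case $\alpha_n=\alpha>\gamma_k$ eventually; in the second $\alpha_n\nearrow\alpha>\gamma_k$), so by Lemma \ref{3} the normalised block-$n$ restriction of $\rho_k$ lies in $S_{\gamma_k}(K_n)\subseteq\bigcup_{\eta<\alpha_n}S_\eta(K_n)$ and is therefore orthogonal to $\mu_n$ by hypothesis. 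Using these block-wise separations I would choose $S^{(k)}_n\in\scr{B}_n$ with $\mu_n(S^{(k)}_n)\to 0$ capturing the block mass of $\rho_k$, and combine them diagonally into a single set $F=\bigcup_n S_n^n$, which belongs to $\scr{B}$ because its complement lies in $\cF$ and is $\mu$-null; then $\nu(F)=\lim_k\rho_k(F)\ge 1-\delta$.

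The main obstacle is precisely this last uniformisation together with the corona mass. Orthogonality is not preserved under weak$^*$ limits, so the induction cannot merely invoke $\rho_k\perp\mu$; one must produce a single $F$ that separates all large $k$ at once, and one must absorb the mass of the $\rho_k$ that lives on the corona $K\setminus\bigcup_n\widehat{B_n}$, where $\mu$ itself is concentrated. I expect to handle this by strengthening the inductive hypothesis so that the separating sets are chosen uniformly along convergent sequences, exploiting that $\mu(\widehat{B_n})=0$, that $\rho_k\to\nu$ makes the block-mass profiles $\rho_k(\widehat{B_n})$ stabilise, and the block-wise orthogonality above, so that the corona mass of each $\rho_k$ is also absorbed into $F$. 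The asymmetry between the two cases enters here: in the second case $\mu$ itself lies in $S_\alpha(K)$, so one can only hope for orthogonality to $\bigcup_{\beta<\alpha}S_\beta(K)$, whereas in the first case the argument runs up to level $\alpha$, which is exactly what yields $\mu\notin S_\alpha(K)$ and makes the hierarchy strict.
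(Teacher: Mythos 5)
Your first two parts (the metric isomorphism via $M\mapsto\bigcup_n\varphi_n(M)^n$, and the complexity computation via the block copies $\nu_N$ of $\mu_N$ converging weak$^*$ to $\mu$, with the two cases according to whether $\alpha_n=\alpha$ for some $n$) are correct and essentially identical to the paper's. The gap is in the orthogonality induction, and it is fatal in the form you state it. Your inductive hypothesis --- ``every $\nu\in S_\beta(K)$ admits a $\mu$-null $F\in\scr{B}$ with $\nu(F)>1-\delta$'' --- is false already at $\beta=0$. Points of $K$ are ultrafilters on $\scr{B}$, so finitely supported measures are \emph{not} ``carried by finitely many blocks'': a Dirac measure $\delta_u$ may sit at a point $u$ of the set $R$ of ultrafilters extending $\cF$, which is precisely the (closed) support of $\mu$. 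For such $u$, every clopen $F$ containing $u$ meets the support of $\mu$ and hence has $\mu(F)>0$, so no $\mu$-null separating clopen exists; $\delta_u$ is orthogonal to $\mu$ (because $\mu$ on $R$ is a copy of Lebesgue measure, hence nonatomic) but \emph{not} strongly orthogonal in your sense. Since, as you yourself observe, plain orthogonality does not pass through weak$^*$ limits, your induction cannot start, and your proposed remedy (stabilising the block-mass profiles $\rho_k(\widehat{B_n})$ and absorbing corona mass into $F$) cannot repair it: the problem is mass at points of $R$ itself, where $\mu$ lives, not mass spread over the blocks or the rest of the corona.

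What is missing is the paper's mechanism for excising the $R$-points, Claim \ref{D}: given $\nu\in S_\beta(K)$, collect the \emph{countable} set $S\subseteq R$ of all points of $R$ used anywhere in the transfinite tree of sequences constructing $\nu$; since $\mu$ on $R$ is (a copy of) Lebesgue measure on $\mathrm{Stone}(\gM)$, which is nonatomic and clopen regular, one clopen $F_0$ with $\mu(F_0)>1-\delta$ avoids all of $S$ at once. Then $\frac{1}{\nu(F_0)}\nu\restriction F_0$ lies in $S_\beta(K\setminus R)$, and it is only on $S_\beta(K\setminus R)$ that the strong-orthogonality induction is run: there the base case is genuine (each $u\notin R$ omits some $F\in\cF$, so $\delta_u$ has a null clopen neighbourhood), and the uniformisation along a convergent sequence is the paper's Claim \ref{claim2}, proved by splitting each $\lambda_k$ into its $L_0$ and $L_1$ parts (Claims \ref{claim3} and \ref{claim4}) --- your diagonal set $F=\bigcup_n S_n^n$ with complement in $\cF$ is essentially the $L_0$ half, Claim \ref{claim4}, and the $L_1$ half needs the separate argument of Claim \ref{claim3} with sets $V_k$ of vanishing $\mu'_n$-trace. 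So your sketch recovers roughly half of the paper's Claim \ref{claim2}, but without the Claim \ref{D} reduction to $K\setminus R$ the induction you propose proves a false statement, and the theorem's orthogonality clause does not follow.
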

\noindent
Assuming Theorem \ref{Main} for now we can proceed to the construction of $(K^{(\alpha)},\mu^{(\alpha)})_{\alpha}$. Recall that for $A\in\cP(\omega)$ the asymptotic density function $d$ is given
by
\[ d(A)= \lim_{n\to\infty} |A\cap \{0,1,\dots,n\}|/(n+1), \]
should this limit exist. Let $\DD$ be the family of sets with 
asymptotic density and
let $\ZZ$ be the ideal of asymptotic density $0$ sets. Notice that $\DD$ is not an algebra. Let $\varphi\colon \gM \rightarrow \DD$ be such that the map
$$
M\mapsto [\varphi(M)]_{\ZZ}\colon \gM\rightarrow \cP(\omega)/\ZZ
$$
is a monomorphism satisfying $\lambda(M) = d(\varphi(M))$, for every $M\in \gM$ (see \cite[Theorem 2.1]{Frank} or \cite[491N]{Fremlin-MT4}).  Let $\scr{A}$ be the Boolean algebra generated by
$\ZZ$ and the set $\{\varphi(M)\colon M\in
\mathfrak{M}\}$. Let $K^{(1)} = \mathrm{Stone}(\scr{A})$ and $\mu^{(1)}$ be the measure on
$\scr{A}$ defined by
\[ \mu^{(1)}(A) = d(A). \]
It is easy to check that $\mu^{(1)} \in S_1(\om)$. Indeed for every
$A\in \scr{A}$ we have
\[ \mu^{(1)}(A) = \lim_{n\to\infty}
\frac{1}{n+1}(\delta_{0}(A)+\ldots
+ \delta_{n}(A)). \]
Plainly, $\mu^{(1)}$ is orthogonal to every finitely
supported measure because each ultrafilter in $K^{(1)}$ contains a set of arbitrarily small density. If for each $n$ we take $\scr{B}_n = \scr{A}$, $\mu_n =
\nu$ and $\alpha_n = 1$ then the conditions of Theorem \ref{Main} are satisfied and we can
start the induction by letting $(\scr{B},\mu)$ be the pair canonically constructed from $(\scr{B}_n,\mu_n)_{n}$ and by setting $\mu^{(2)} = \mu$ and $K^{(2)} = \mathrm{Stone}(\scr{B})$. At limit stage $\gamma$ we
let $\{\alpha_n\colon n\in \om\}$ be an increasing sequence cofinal in $\gamma$
and again use Theorem \ref{Main}. This proves (\ref{m1}) and therefore the first part of Theorem \ref{mainfromone}.\\\\
Before we move on to the proof of Theorem \ref{Main} let us fix one more piece of notation. If $(\scr{B},\mu)$ has been constructed canonically from $(\scr{B}_n,\mu_n)_n$ then we let $L_0$ be the collection of all non-negative $\nu \in \MM(\mathrm{Stone}(\scr{B}))$ such that $\widehat{\bigcup}_n B_n$ is $\nu$-full. We let $L_1$ be the collection of all non-negative $\nu \in \MM(\mathrm{Stone}(\scr{B}))$ such that $\nu(\widehat{\bigcup}_n B_n) = 0$. Clearly for any non-negative $\nu\in \MM(K)$ there exists $\nu_0\in L_0$ and $\nu_1\in L_1$ such that $\nu = \nu_0+\nu_1$, indeed, just take $\nu_0 = \nu\restriction \widehat{\bigcup}_n B_n$ and $\nu_1 = \nu-\nu_0$. This notation of course depends on the algebra $\scr{B}$, but we hope that the choice of $\scr{B}$ will be clear from the context.
\noindent
\begin{proof}[Proof of Theorem \ref{Main}] For clarity's sake first notice that if $\lambda\in S_\alpha(K)$ for some $\alpha$ and $\lambda(B_n)\neq 0$ then by Lemma \ref{3} 
$$
\frac{1}{\lambda(B_n)}\lambda \restriction B_n \in S_\alpha(B_n)
$$
and thus $\lambda\restriction B_n$ can be viewed as a member of $S_\alpha(K_n)$. If $\lambda \in S_\alpha(K_n)$ then in the same way $\lambda$ can be viewed as a measure in $S_\alpha(B_n)$. By Lemma \ref{3} the measure $\lambda'$ defined by $\lambda'(A) = \lambda(A\cap B_n)$ is a measure in $S_\alpha(K)$.\\\\
Let $R \sub K$ be the collection of ultrafilters $u\in K$ such that $\cF\subseteq u$. Then $R$ is a closed set in $K$ and is indeed the support of $\mu$.
\begin{claim}\label{D}
For every countable subset $S$ of $R$ and $\delta>0$ there is $F_0\in \scr{B}$ such that
$F_0\cap S=\emptyset$ but $\mu(F_0)>1-\delta$.
\end{claim}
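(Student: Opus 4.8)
```latex
\textbf{Proof proposal.} The plan is to exploit the structure of the canonical construction, in particular the fact that $R$ is the set of ultrafilters containing $\cF$ and that $\mu$ is concentrated on $\widehat{\bigcup}_n B_n$. The key observation is that every ultrafilter $u \in R$ contains $\cF$, and so for each such $u$ and each $n$, the trace $u \cap \scr{B}_n^n$ looks like a point of $K_n = \mathrm{Stone}(\scr{B}_n)$ (via the coordinate shift $\varphi_n$). Since $(\gM,\lambda)$ is metrically isomorphic to each $(\scr{B}_n,\mu_n)$ and $\mu_n$ is orthogonal to all Dirac measures (in particular each point of $K_n$ can be separated from large-measure sets), one can push down from the countable set $S$ to a countable collection of points in the $K_n$'s and separate them one coordinate at a time.

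First I would enumerate $S = \{u_k : k \in \omega\}$ and, for each fixed $n$, consider the countably many ultrafilters $\{u_k \cap \scr{B}_n^n : k \in \omega\}$ living essentially in $K_n$. The crucial input is that $\mu_n$ is orthogonal to every finitely supported measure on $K_n$ (this follows from $\mu_n \in S_{\alpha_n}(\omega)$ being orthogonal to members of $S_\beta(K_n)$ for $\beta < \alpha_n$, and in particular to $S_0(K_n)$). Orthogonality to finite support means: for each point $p \in K_n$ and each $\eta > 0$ there is $G \in \scr{B}_n$ with $p \notin G$ (i.e. $p$ in the complement) yet $\mu_n(G) > 1 - \eta$. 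Since a single point avoids a set of measure arbitrarily close to $1$, and we have only countably many points to avoid, I would choose $\eta_k$ summing to less than $\delta$ and, for each $k$, a set $G_k^{(n)} \in \scr{B}_n$ with $u_k \cap \scr{B}_n^n \notin G_k^{(n)}$ and $\mu_n(G_k^{(n)}) > 1 - \eta_k$.

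Next I would assemble these coordinatewise choices into a single $F_0 \in \scr{B}$. The natural candidate is built from the intersection over $k$ (within each coordinate) transferred up through $\cF'$; concretely one wants $F_0 \cap B_n$ to be a member of $\scr{B}_n^n$ of $\mu'_n$-measure close to $1$ that omits all the finitely many relevant traces, and then take $\mu(F_0) = \lim_n \mu'_n(F_0 \cap B_n)$. Because $S \subseteq R$ and the sets in $\cF$ have $\mu'_n(\cdot \cap B_n) \to 1$, the ultrafilters in $S$ are ``spread out'' across the coordinates, so avoiding each $u_k$ requires only a negligible $\mu$-mass. The measure estimate then gives $\mu(F_0) \geq 1 - \sum_k \eta_k > 1 - \delta$, while $F_0 \cap S = \emptyset$ by construction since each $u_k$ is excluded in the coordinate that carries it.

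\emph{The main obstacle} I anticipate is the bookkeeping needed to guarantee that the separating set $F_0$ genuinely lies in the algebra $\scr{B}$ (which is generated only by $\cF \cup \cF'$, not the full $\scr{B}_n^n$) and that a \emph{single} clopen set simultaneously avoids all of the countably many $u_k$ while keeping $\mu$-measure above $1-\delta$. The delicate point is that different $u_k$ may concentrate on different coordinates $B_n$, so one must interleave the coordinatewise avoidance: for those $k$ whose relevant behaviour shows up in coordinate $n$, the excision must happen in $\scr{B}_n^n$, and one must check that summing the small excised masses across all coordinates still leaves total $\mu$-measure $> 1-\delta$. Controlling this requires using the orthogonality uniformly and arranging the $\eta_k$ so that the total loss is under $\delta$; verifying membership in $\scr{B}$ then reduces to checking that the finite Boolean combinations involved are expressible via $\cF'$ together with the density-zero sets.
```
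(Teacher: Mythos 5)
Your key observation is false, and the approach built on it cannot be repaired. If $u\in R$ then $\cF\subseteq u$, and for every $n$ the set $\omega\setminus B_n$ belongs to $\cF$ (its trace on $B_m$ is all of $B_m$ for every $m\neq n$, so the defining limit is $1$); hence $\omega\setminus B_n\in u$, and consequently no subset of $B_n$ whatsoever can lie in $u$. In other words $u\cap\scr{B}_n^n=\emptyset$: the points of $R$ have no coordinatewise traces at all --- they live entirely ``at infinity'' --- so there are no ``countably many points in the $K_n$'s'' to separate, and the orthogonality of $\mu_n$ to Dirac measures on $K_n$ (which the paper does use, but only for measures concentrated on $K\setminus R$, in Claims \ref{claim2}--\ref{claim4}) gives no purchase on this claim.

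The same computation shows that your assembled $F_0$ would achieve the opposite of what is wanted. A set whose trace on each $B_n$ lies in $\scr{B}_n^n$ and whose trace measures $\mu'_n(F_0\cap B_n)$ tend to $1$ is, by definition, a member of $\cF$; but every member of $\cF$ belongs to every ultrafilter of $R$, i.e. $R\subseteq \widehat{F_0}$, so such an $F_0$ \emph{contains} $S$ rather than avoiding it (and if the trace measures stay large but fail to converge appropriately, $F_0$ need not even belong to $\scr{B}$). The separating set must instead come from the other generating family $\cF'$, and this is exactly what the paper does: since $\mu$ is Radon, $\mathrm{Clopen}(R)\cong\scr{B}/\mathrm{Null}(\mu)=\{[\bigcup_n\varphi_n(M)^n]_{\mathrm{Null}(\mu)}\colon M\in\gM\}\cong\gM$, with $\mu$ transferred to $\lambda$, so $R$ is homeomorphic to $\mathrm{Stone}(\gM)$ and $S$ is a countable, hence $\mu$-null, subset of it; clopen regularity of $\lambda$ on $\mathrm{Stone}(\gM)$ (\cite[322Qc]{fremv3}) then gives a clopen $C\subseteq R\setminus S$ with $\mu(C)>1-\delta$, which lifts to $F_0=\bigcup_n\varphi_n(M)^n\in\cF'\subseteq\scr{B}$. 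Concretely: each $u_k\in S$ induces an ultrafilter $p_k=\{M\in\gM\colon \bigcup_n\varphi_n(M)^n\in u_k\}$ on $\gM$; by non-atomicity pick $M_k\in p_k$ with $\lambda(M_k)<\delta 2^{-k-1}$, and put $M=1-\sup_k M_k$, using completeness of $\gM$; then $\lambda(M)>1-\delta$ while $M\wedge M_k=0$ forces $M\notin p_k$, so $F_0=\bigcup_n\varphi_n(M)^n$ misses every $u_k$ and $\mu(F_0)=\lambda(M)>1-\delta$. This passage through the measure algebra $\gM$ (completeness/clopen regularity), rather than coordinatewise avoidance, is the missing idea.
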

\begin{proof} 
First notice that since $\mu$ is Radon
\[ \mathrm{Clopen}(R) \cong \scr{B}/\mathrm{Null}(\mu) = \{[\bigcup_n \varphi_n(M)^n]_{\mathrm{Null}(\mu)}\colon M\in \gM\}. \]	
The latter is clearly isomorphic to $\gM$ and
therefore $\mathfrak{M}$ is isomorphic to $\mathrm{Clopen}(R)$ and it is easy to see that this isomorphism transfers $\mu$ to
$\lambda$. Thus $R$ is homeomorphic to $\mathrm{Stone}(\mathfrak{M})$ and the measure $\mu$ on $R$ can be seen as the Lebesgue measure on $\mathrm{Stone}(\mathfrak{M})$.\\\\
Let $S$ be countable and $\delta>0$. Of course $\mu(S)=0$. Since the Lebesgue measure is clopen regular on $\mathrm{Borel}(R)$ (see \cite[322Qc]{fremv3}) we can find a set $C$ clopen in $R$ which is disjoint from $S$
and is such that $\mu(C)> 1-\delta$. Let $M\in \gM$ such that $C = [\bigcup_n \varphi_n(M)^n]_{\mathrm{Null}(\mu)}$ and $F_0 = \bigcup_n \varphi_n(M)^n\in \scr{B}$. Since $F_0\cap R = C$, $F_0$ is disjoint from $S$ and clearly $\mu(F_0)>1-\delta$.
\end{proof}
\noindent
For measures $\nu_1$ and $\nu_2$ defined on a Boolean algebra $\scr{C}$ we say that $\nu_1$ is \emph{strongly orthogonal} to $\nu_2$ if for each $\delta>0$ there is $C\in\scr{C}$ such that $\nu_1(C)>\nu_1(\scr{C})-\delta$ and $\nu_2(C)=0$.\\\\
In case of measures from $S(K\setminus R)$, the crux of the proof lies in the following claim.
\begin{claim}\label{claim2} If $(\lambda_k)_k$ is a sequence from $P(K)$ such that for all but finitely many $n$, $\mu'_n$ is orthogonal to each $\lambda_k\restriction B_n$, and each $\lambda_k$ is strongly orthogonal to $\mu$, then for every $\delta>0$ we can find an
$F\in \FF$ and a subsequence $(\lambda_{k_n})_{n}$ such that $\lambda_{k_n}(F)\leq \delta$ for each $n$.
\end{claim}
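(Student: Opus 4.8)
The plan is to play the two orthogonality hypotheses against each other on different scales: strong orthogonality to $\mu$ will push the bulk of each $\lambda_k$ off the support $R$, while the blockwise orthogonality of $\mu'_n$ and $\lambda_k\restriction B_n$ will let me shave off the leftover mass on each individual block $B_n$. Note first that, since $R=\{u\colon \FF\subseteq u\}$, we have $R=\bigcap_{F\in\FF}F$ (identifying elements of $\scr{B}$ with clopen sets), and $\FF$ is closed under finite intersections (if $\mu'_n(F\cap B_n),\mu'_n(F'\cap B_n)\to 1$ then $\mu'_n((F\cap F')\cap B_n)\to 1$).

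First I would convert strong orthogonality into a statement about $\FF$. Fix $\delta>0$. For each $k$ pick $C_k\in\scr{B}$ with $\lambda_k(C_k)>1-\delta/2$ and $\mu(C_k)=0$. As $R=\supp(\mu)$, a clopen $\mu$-null set must be disjoint from $R$, so $C_k\cap R=\emptyset$. Hence $C_k\subseteq \bigcup_{F\in\FF}(K\setminus F)$, and compactness of the clopen set $C_k$ together with the closure of $\FF$ under finite intersections yields some $F^*_k\in\FF$ with $C_k\cap F^*_k=\emptyset$. Consequently, if $F\in\FF$ satisfies $F\subseteq F^*_k$ then $F\cap C_k=\emptyset$, so $\lambda_k(F)\le\lambda_k(K\setminus C_k)<\delta/2$. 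The obstruction is that no single $F\in\FF$ can sit below all the $F^*_k$ simultaneously; the most I can arrange is that $F\setminus F^*_{k_n}$ meets only finitely many blocks, after which the blockwise hypothesis must absorb those finitely many blocks.

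The blockwise tool is this: for $n$ beyond the finite exceptional set of the hypothesis, any $k$ and any $\eta>0$, orthogonality of $\mu'_n$ and $\lambda_k\restriction B_n$ gives $G\subseteq B_n$ in $\scr{B}_n^n$ with $\mu'_n(G)>1-\eta$ and $\lambda_k(G)<\eta$; replacing a current candidate $A_n\subseteq B_n$ by $A_n\cap G$ drops its $\lambda_k$-mass below $\eta$ at a cost of at most $\eta$ in $\mu'_n$-mass, and since this only shrinks $A_n$ it cannot undo a containment or a bound already secured. On the finitely many exceptional blocks I simply set $A_m=\emptyset$; this is invisible to membership in $\FF$, which is a condition on the limit. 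I then build $F=\widehat{\bigcup}_m A_m$ and the subsequence $(k_n)$ recursively together with milestones $0=m_0<m_1<\cdots$, with the invariant that $A_m\subseteq F^*_{k_i}$ whenever $m\ge m_i$. Writing $n(m)=\max\{n\colon m_n\le m\}$, a block $m$ is created with initial value $B_m\cap\bigcap_{i\le n(m)}F^*_{k_i}$; having chosen $k_0,\dots,k_n$ I take $m_n$ large enough that $\mu'_m(F^*_{k_i}\cap B_m)\ge 1-2^{-n}/(n+1)$ for all $i\le n$ and $m\ge m_n$ (possible as each $F^*_{k_i}\in\FF$), so this initial value has $\mu'_m$-mass $\ge 1-2^{-n(m)}$. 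Thereafter, at each stage $n>n(m)$ I trim block $m$ with the blockwise tool so that $\lambda_{k_n}(A_m)<\gamma_{n,m}:=\delta\,2^{-m-n}$. Summability in $n$ gives total trimming loss $\sum_{n}\gamma_{n,m}\le\delta\,2^{-m}$ on each fixed block, so $1-\mu'_m(A_m)\le 2^{-n(m)}+\delta 2^{-m}\to 0$ and $F\in\FF$; summability in $m$ gives $\sum_{m<m_n}\gamma_{n,m}\le\delta/2$. Since $A_m\subseteq F^*_{k_n}$ whenever $m\ge m_n$, the set $F\setminus F^*_{k_n}$ lives in $\bigcup_{m<m_n}B_m$, whence $F\cap C_{k_n}\subseteq\bigcup_{m<m_n}A_m$ and
\[ \lambda_{k_n}(F)\le \lambda_{k_n}(K\setminus C_{k_n})+\sum_{m<m_n}\lambda_{k_n}(A_m)<\frac{\delta}{2}+\sum_{m<m_n}\gamma_{n,m}\le\delta. \]

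The main obstacle is precisely the tension inside this recursion: membership $F\in\FF$ forces $\mu'_m(A_m)\to 1$, yet a fixed block may have to be trimmed at \emph{every} later stage to keep each newly considered measure small, so it is in danger of being shaved infinitely often. The whole argument hinges on choosing the trimming budget $\gamma_{n,m}$ doubly geometric, so that each block loses only a vanishing total while every $\lambda_{k_n}$ still sees at most $\delta$, and on maintaining this alongside the containments $A_m\subseteq F^*_{k_i}$ that convert strong orthogonality into a finite-block condition. (I note that nothing forces a proper subsequence: under the hypothesis as stated one may take $k_n=n$, so a single $F\in\FF$ works for every $k$; I phrase it with a subsequence only to match the statement.)
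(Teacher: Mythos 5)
Your overall frame --- converting strong orthogonality into disjointness from some $F^*_k\in\FF$ via compactness of the clopen $C_k$ and closure of $\FF$ under finite intersections, then building $F$ blockwise so that $F\setminus F^*_{k_n}$ lives on finitely many blocks --- is sound and close in spirit to the paper's proof (which decomposes each $\lambda_k$ into its $L_0$ and $L_1$ parts, handled in Claims \ref{claim3} and \ref{claim4}). But the step you yourself identify as the hinge is broken: trimming a fixed block $m$ at \emph{every} stage $n>n(m)$ produces the final $A_m$ as an infinite decreasing intersection of members of $\scr{B}_m^m$, and your ``summable budget'' bound $1-\mu'_m(A_m)\le 2^{-n(m)}+\sum_n\gamma_{n,m}$ presumes countable additivity. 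Here $\scr{B}_m$ is merely a subalgebra of $\cP(\omega)$, so the infinite intersection need not belong to $\scr{B}_m^m$ at all (and then $F\notin\scr{B}$, let alone $\FF$); and $\mu'_m$ is a nonatomic, finitely additive measure on the countable set $B_m$, which can never be countably additive --- decreasing sequences of sets with measures tending to $1$ can have empty intersection (for the density measure, take the cofinite sets $\omega\setminus\{0,\dots,f(n)\}$). Each finite stage of your trimming is fine, but the limit object neither lies in the algebra nor obeys your mass estimate.

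The gap is not cosmetic, because your parenthetical claim that no proper subsequence is needed (take $k_n=n$) is false, which shows the infinite-trimming scheme cannot be repaired as stated. Partition $\omega=\bigsqcup_m S_m$ into infinite sets and, for $k\in S_m$, let $\lambda_k$ be Dirac measures at points enumerating \emph{all} of $B_m$. Each $\lambda_k$ is strongly orthogonal to $\mu$ (since $B_m\in\scr{B}$ and $\mu(B_m)=0$), and $\mu'_n$ is orthogonal to every $\lambda_k\restriction B_n$ for every $n$ (each $\mu'_n$ is nonatomic, and $\lambda_k\restriction B_n=0$ for $n\neq m$); yet any $F$ with $\lambda_k(F)\le\delta<1$ for all $k$ must avoid every point of every $B_m$, whence $\mu'_m(F\cap B_m)=0$ and $F\notin\FF$. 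In your recursion this is exactly a block being shaved to $\emptyset$ by infinitely many trims. The paper escapes this by using the subsequence essentially: a pigeonhole argument yields $N$ such that for each $j$ infinitely many $k$ satisfy $\lambda_k(\bigcup_{N\le i\le j}B_i)<\delta/2$, so $k_{n}$ can be chosen \emph{after} the initial blocks are fixed, with small total mass on them; each $A_n$ is then finalized once, against only the finitely many measures already chosen (Claim \ref{claim4}), while the $L_1$ parts --- which ignore any finite union of blocks --- are absorbed by a single diagonal set working for the whole sequence (Claim \ref{claim3}).
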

\noindent
We divide the proof of Claim \ref{claim2} into two parts considering measures from $L_1$ and measures from $L_0$ separately.
\begin{claim}\label{claim3} If $(\lambda_k)_{k}$ is a sequence from $L_1$ and 
each $\lambda_k$ is strongly orthogonal to $\mu$, then for each $\delta>0$
there exists $F_1\in \FF$ such that for each $k$ we have $\lambda_k(F_1)<\delta$.
\end{claim}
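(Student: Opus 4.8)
The plan is to treat the $\lambda_k$ one at a time via strong orthogonality, and then to amalgamate the resulting $\mu$-full sets into a single member of $\cF$ by a diagonal, block-by-block construction; the point that makes the amalgamation possible is that belonging to $L_1$ renders each $\lambda_k$ insensitive to the behaviour of a clopen set on any finite union of the blocks $B_n$.

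First I would exploit strong orthogonality of $\lambda_k$ to $\mu$ for each fixed $k$: given $\delta$ there is $C_k\in\scr{B}$ with $\mu(C_k)=0$ and $\lambda_k(C_k)>\lambda_k(K)-\delta$. Put $F^{(k)}=\om\sm C_k$. Directly from the definition of $\cF$ one checks $F^{(k)}\in\cF$, since $F^{(k)}\cap B_n=B_n\sm(C_k\cap B_n)\in\scr{B}_n^n$ and $\lim_n\mu'_n(F^{(k)}\cap B_n)=\mu(F^{(k)})=1$; moreover $\lambda_k(F^{(k)})=\lambda_k(K)-\lambda_k(C_k)<\delta$. Thus each $\lambda_k$ can be individually pushed below $\delta$ by a set of $\cF$, and the whole difficulty is to find one set of $\cF$ that works simultaneously for every $k$---a naive infinite intersection $\bigcap_k F^{(k)}$ need not lie in $\cF$, as its $\mu$-value may drop below $1$.

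The crucial observation, and the only place the hypothesis $\lambda_k\in L_1$ enters, is a tail-insensitivity property. Since $\lambda_k(\widehat{\bigcup}_n B_n)=0$ we have $\lambda_k(B_n)=0$ for every $n$, hence $\lambda_k(\widehat{\bigcup}_{n<N}B_n)=0$ for every $N$. Consequently, if $F,F'\in\scr{B}$ satisfy $F\cap B_n\sub F'\cap B_n$ for all $n\geq N$, then $F\sm F'\sub\widehat{\bigcup}_{n<N}B_n$ in the Stone space and so $\lambda_k(F)\leq\lambda_k(F')$; in other words $\lambda_k$ sees only the tail of the sequence $(F\cap B_n)_n$.

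With this in hand I would carry out the diagonalisation. Choose $0=N_0<N_1<\cdots$ and define $F_1$ block-by-block by $F_1\cap B_n=\bigcap_{i\leq j}(F^{(i)}\cap B_n)$ for $N_j\leq n<N_{j+1}$; each such piece lies in $\scr{B}_n^n$ because $\scr{B}_n^n$ is an algebra. Fix $k$: once $n\geq N_k$ the block index $j$ is at least $k$, so $F_1\cap B_n\sub F^{(k)}\cap B_n$, whence $F_1\sm F^{(k)}\sub\widehat{\bigcup}_{n<N_k}B_n$ and, by the previous paragraph, $\lambda_k(F_1)\leq\lambda_k(F^{(k)})<\delta$, as desired. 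It then remains to guarantee $F_1\in\cF$, i.e.\ $\lim_n\mu'_n(F_1\cap B_n)=1$: on block $j$ one has $\mu'_n(F_1\cap B_n)\geq 1-\sum_{i\leq j}\bigl(1-\mu'_n(F^{(i)}\cap B_n)\bigr)$, and since every $F^{(i)}\in\cF$ gives $\mu'_n(F^{(i)}\cap B_n)\to 1$, choosing the $N_j$ to increase fast enough (say so that $1-\mu'_n(F^{(i)}\cap B_n)<(j+1)^{-2}$ for all $i\leq j$ and all $n\geq N_j$) forces the error on block $j$ below $1/(j+1)$, which tends to $0$ as $n\to\infty$. I expect the tail-insensitivity step to be the only non-routine ingredient; the diagonal bookkeeping and the choice of the $N_j$ are then mechanical.
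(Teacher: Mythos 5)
Your proof is correct and takes essentially the same route as the paper: your block-by-block set $F_1$, with $F_1\cap B_n=B_n\setminus\bigcup_{i\leq j}C_i$ on the $j$-th block, is precisely the paper's $F_1=\bigcup_{n\geq n_0}C_n$ where $C_n=B_n\setminus\bigcup_{j\leq i}V_j$ for $n\in[n_i,n_{i+1})$, and both arguments close with the same observation that a measure in $L_1$ vanishes on every finite union of the blocks $B_n$, so that discarding finitely many blocks costs nothing. The only (immaterial) difference is bookkeeping: you control each error $1-\mu'_n\bigl(F^{(i)}\cap B_n\bigr)$ separately by $(j+1)^{-2}$ and apply a union bound, whereas the paper bounds $\mu'_n\bigl(\bigcup_{j\leq i}V_j\cap B_n\bigr)$ directly by $1/(i+1)$.
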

\begin{proof} Fix $\delta>0$. Since $(\lambda_n)_n$ is strongly orthogonal to $\mu$ we can find, for each $k$, a set $V_k\in \scr{B}$ such that
$\lambda_k(V_k)>\lambda_k(K)-\delta$ and $\lim_n \mu'_n(V_k\cap B_n) = 0$. 
Clearly, $\lim_n \mu'_n(\bigcup_{k\leq l} V_k \cap
B_n) = 0$ for each $l\in \om$. Therefore, there is
a sequence $(n_i)_i$ of natural numbers such
that
\begin{itemize}
	\item $n_i<n_{i+1}$ for each $i$;
	\item for every $n\geq n_i$ we have
		$\mu_n'(\bigcup_{j\leq i} V_j \cap B_n) <
		1/(i+1)$.
\end{itemize}
For each $i$ and $n\in [n_i,n_{i+1})\cap \omega$, let $C_n = B_n\setminus (\bigcup_{j\leq i} V_j) \in\scr{B}_n^n$. Now set $F_1 = \bigcup_{n\geq n_0} C_n$. Then for each $i$ and $n\geq n_i$ we have $\mu_n'(F_1 \cap B_n)> 1-1/(i+1)$ so
that $F_1\in \cF$. Since for each $k$ we have $V_k\setminus \bigcup_{l<k} B_l\subseteq K\setminus F_1$ it follows that $\lambda_k(F_1)<\delta$.
This completes the proof of Claim \ref{claim3}.
\end{proof}
\begin{claim}\label{claim4} Suppose $(\lambda_k)_k$ is a subsequence from $L_0$ such that for all but finitely many $n$, $\mu'_n$ is orthogonal to each $\lambda_k\restriction B_n$. Then for each $\delta>0$ there is an $F_2\in \FF$ such that $\lambda_k(F_2)\leq\delta$ for infinitely many $k$.
\end{claim}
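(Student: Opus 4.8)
The plan is to reduce the claim to a single diagonal construction on the blocks and then to isolate the one genuine difficulty, namely the mass of the $\lambda_k$ that drifts off to higher and higher blocks. First I would record the reduction coming from $\lambda_k\in L_0$: since the $B_n$ are pairwise disjoint clopen sets whose union is $\nu$-full for every $\nu\in L_0$, countable additivity gives $\lambda_k(F)=\sum_n\lambda_k(F\cap B_n)$ for every $F\in\cF$. Writing $G_n=F_2\cap B_n$, the task becomes: produce $G_n\in\scr{B}_n^n$ with $\mu'_n(G_n)\to 1$ (so that $F_2=\widehat{\bigcup}_n G_n\in\cF$) and with $\sum_n\lambda_k(G_n)\le\delta$ for infinitely many $k$. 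Since only the limit $\mu'_n(G_n)\to 1$ matters, I am free to set $G_n=\emptyset$ on any finite initial segment of blocks. Let $N$ be a threshold with $\mu'_n$ orthogonal to every $\lambda_k\restriction B_n$ for $n\ge N$.

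Next I would pass to a subsequence. As each $\lambda_k$ is a subprobability measure, a diagonal extraction lets me assume $\lambda_k(B_n)\to c_n$ for every $n$; Fatou gives $\sum_n c_n\le 1$, so I may fix $M\ge N$ with $\sum_{n>M}c_n<\delta/4$ and set $G_n=\emptyset$ for $n\le M$. On the remaining blocks I build the $G_n$ in rounds together with indices $k_1<k_2<\cdots$. At round $i$, having fixed $G_n$ for $M<n\le m_{i-1}$ and put $W_{i-1}=\bigcup_{M<n\le m_{i-1}}G_n$, I first choose $k_i>k_{i-1}$ with $\lambda_{k_i}(W_{i-1})<\delta/4$; this is possible because $W_{i-1}$ lives in the fixed finite range $(M,m_{i-1}]$, so $\lambda_k(W_{i-1})\le\sum_{M<n\le m_{i-1}}\lambda_k(B_n)\to\sum_{M<n\le m_{i-1}}c_n<\delta/4$. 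I then pick $m_i>m_{i-1}$ and, for each $n\in(m_{i-1},m_i]$, use the orthogonality of $\mu'_n$ to $\lambda_{k_1}\restriction B_n,\dots,\lambda_{k_i}\restriction B_n$ (valid since $n>M\ge N$) to choose, by finitely many intersections, a set $G_n\in\scr{B}_n^n$ with $\mu'_n(G_n)>1-\beta_n$ and $\lambda_{k_l}(G_n)<\beta_n$ for all $l\le i$, where $(\beta_n)$ is fixed in advance with $\beta_n\to 0$ and $\sum_{n>M}\beta_n<\delta/2$.

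Finally I would verify the two requirements for $F_2=\widehat{\bigcup}_{n>M}G_n$. Since $\mu'_n(G_n)>1-\beta_n\to 1$, we have $F_2\in\cF$. Fix a selected index $k_i$. Every block $n>m_{i-1}$ is treated in round $i$ or later, so $\lambda_{k_i}(G_n)<\beta_n$ there, while the blocks $M<n\le m_{i-1}$ contribute exactly $\lambda_{k_i}(W_{i-1})<\delta/4$; hence
$$
\lambda_{k_i}(F_2)=\lambda_{k_i}(W_{i-1})+\sum_{n>m_{i-1}}\lambda_{k_i}(G_n)<\frac{\delta}{4}+\sum_{n>M}\beta_n<\delta .
$$
As this holds for every $i$, it holds for infinitely many $k$, as required.

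The step I expect to be the crux is the interaction between two opposing demands: each $\lambda_k$ may spread its mass over infinitely many blocks, so that no single finite truncation controls it, yet each $\lambda_k$ also charges the low blocks, which are built before $k$ is even selected. The escaping mass is tamed by applying the orthogonality shaving uniformly on the whole tail $n>m_{i-1}$ against a summable budget $(\beta_n)$; the low-block contribution is neutralised not by shaving but by \emph{choosing} the index $k_i$ to be nearly disjoint from the already-committed finite piece $W_{i-1}$, which the estimate $\lambda_k(W_{i-1})\to\sum_{M<n\le m_{i-1}}c_n<\delta/4$ makes possible. Note that $\lambda_k\in L_0$ is used twice: once to turn $\lambda_k(F_2)$ into the block sum $\sum_n\lambda_k(G_n)$, and once, through $\sum_n c_n\le 1$, to secure a tail of blocks carrying asymptotically negligible limiting mass.
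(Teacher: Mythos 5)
Your proof is correct and follows essentially the same route as the paper: an interleaved diagonal construction in which each new index $k_i$ is chosen to put mass less than $\delta/4$ on the finite part of $F_2$ already committed, while the orthogonality of $\mu'_n$ to the finitely many $\lambda_{k_l}\restriction B_n$ selected so far shaves each new block against a summable budget, and $\lambda_k\in L_0$ converts $\lambda_k(F_2)$ into the block sum. The only cosmetic difference is how the good indices are guaranteed to exist: you extract a subsequence with $\lambda_k(B_n)\to c_n$ and apply Fatou to find a tail of blocks of small limiting mass, whereas the paper uses a direct pigeonhole computation producing a threshold $N$ such that for each $j$ infinitely many $k$ satisfy $\lambda_k(\bigcup_{N\le i\le j}B_i)<\delta/2$.
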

\begin{proof} Fix $\delta>0$. A straightforward computation shows that we can find $N\in\omega$ such that for each $j$ there are infinitely many $k$ where
$$
\lambda_k(\bigcup_{N\leq i\leq j}B_i)<\delta/2.
$$
In light of this and the orthogonality assumption on the $\lambda_k$ we may, without loss of generality, assume that for every $j$ there are
infinitely many $k$'s such that
$\lambda_k(\bigcup_{i\leq j} B_i) < \delta/2$ and also that for every $n$, the measure $\mu'_n$ is orthogonal to every $\lambda_k\restriction B_n$.\\\\
We construct a sequence $(k_n)_{n}$ of natural numbers and a sequence
$(A_n)_{n}$ such that
\begin{itemize}
	\item $k_n< k_{n+1}$ for each $n$;
	\item $A_n \in \scr{B}^n_n$ and $\mu'_n(A_n)>1 -
		1/(n+1)$ for each $n$;
	\item for every $n$ and $l > n$ we have $\lambda_{k_n}(\bigcup_{i\leq n}A_i)<\delta/2$ and $\lambda_{k_{n}}(A_{l})<\delta/2^{l+1}$.
\end{itemize}
Let $A_0 = B_0$ and let $k_0$ be such that
$\lambda_{k_0}(A_0)<\delta/2$. Suppose we have constructed
$k_n$ and $A_n$ as above. Since $\mu'_{n+1}$ is
orthogonal to $\lambda_{k_j}\restriction B_{n+1}$ 
for each $j\leq n$,
there is $A_{n+1}\in \scr{B}^{n+1}_{n+1}$ such that 
\begin{itemize}
	\item $\mu'_{n+1}(A_{n+1})>1 - 1/(n+2)$;
	\item $\lambda_{k_j}(A_{n+1})<\delta/2^{n+2}$ for
		each $j\leq n$.
\end{itemize}
Let $F_2=\bigcup_n A_n$. Clearly $F_2 \in \FF$ and for any $n$, since $\lambda_{k_n} \in L_0$, we have
\[ \lambda_{k_n}(F_2) = \lim_{m\to \infty} \lambda_{k_n}(\bigcup_{n<m} A_n) \leq \delta. \]
This completes the proof of Claim \ref{claim4}.
\end{proof}
\noindent
Claim \ref{claim2} now follows since each $\lambda_k$ can be decomposed into a member from $L_0$ and $L_1$. We now have the following two cases.\\\\
\noindent
\bld{Case 1.} $\alpha_n<\alpha$ for each $n$.\\\\
Since each $\mu'_n\in S_{\alpha_n}(\omega)$ (with respect to $K_n$, and therefore with respect to $K$), we have $\mu \in S_{\alpha}(\omega)$ (with respect to $K$). Let $\nu\in S_{\beta}(K)$ for some $\beta<\alpha$. Then for some $N\in\omega$ we have $\beta<\alpha_n$ for each $n>N$. 
Fix $\delta>0$ and let $S$ be the countable set of points in $R$ which appear
in the process of constructing $\nu$. Using Claim \ref{D} we can find a clopen $F_0$ such that
$\mu(F_0)>1-\delta$ and $F_0\cap S=\emptyset$. The measure $\frac{1}{\nu(F_0)} \nu\restriction F_0\in S_\beta(K\setminus R)$.\\\\
Notice that, by the definition of $R$, all measures in $S_0(K\setminus R)$ are strongly orthogonal to $\mu$. By a repeated application of Claim \ref{claim2} and the assumption that, for each $n>N$, measures from $\bigcup_{\beta<\alpha_n}S_\beta(B_n)$ are orthogonal to $\mu'_n$, it follows by induction that all measures in $S_\beta(K \setminus R)$ are strongly orthogonal to $\mu$. Thus we can find $F_1\in \scr{B}$ such 
that $(\nu\restriction F_0)(F_1)<\delta$ and $\mu(F_1)=1$. In particular $\nu(F_0\cap F_1)<\delta$ but $\mu(F_0\cap F_1)>1-\delta$ and $\mu$ is orthogonal to $\nu$.\\\\
\bld{Case 2.} $\alpha_n=\alpha$ for some $n$.\\\\
If $\nu\in S_\alpha(K)$, then it is a limit of measures from $\bigcup_{\beta<\alpha} S_\beta(K)$ and we can proceed as in Case 1.\\\\
Finally, as in Claim \ref{D}, we see that $(\gM,\lambda)$ is metrically isomorphic to $(\scr{B}, \mu)$ which is witnessed by the map
\[ M\mapsto \bigcup_n \varphi_n(M)^n. \]
This completes the proof of Theorem \ref{Main}. 
\end{proof}
\section{Upper bounds of the sequential hierarchy}
\noindent
In the previous section we constructed the sequence $(K^{(\alpha)},\mu^{(\alpha)})_{\alpha}$ of Theorem \ref{mainfromone} witnessing the non-triviality of the sequential hierarchy. A priori it could be that even in $P(K^{(2)})$ there
are measures of arbitrarily high sequential complexity (like in $P(K^{(\om_1)})$ from Corollary \ref{omega1}). Then the construction of $K^{(\alpha)}$ for $\alpha>2$ would be obsolete. This is not the case. In this section we prove the second part of Theorem \ref{mainfromone}, that is, we show that for each $1\leq\alpha<\omega_1$ we have 
\begin{equation}\label{m2}
S(K^{(\alpha)}) = S_{\alpha+1}(K^{(\alpha)}).
\end{equation}
\noindent
Recall that a non-negative Borel measure $\nu$ is \emph{absolutely continuous} with respect to a non-negative Borel measure $\mu$, and we write $\nu\ll \mu$, if $\mathrm{Null}(\mu)\subseteq \mathrm{Null}(\nu)$. We will need the following fact.
\begin{thm}[Plebanek] \label{plebanek}
Let $K$ be a Stone space and let $\mu\in S_\alpha(K)$ for some $\alpha<\om_1$.
If $\nu \ll \mu$ then 
$\nu\in S_{\alpha+1}(K)$.
\end{thm}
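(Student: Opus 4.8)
The plan is to prove Plebanek's theorem by induction on $\alpha<\om_1$, reducing the general absolutely continuous case to the special case where $\nu$ has a bounded Radon--Nikodym derivative. Since $K$ is a Stone space and $\mu$ is Radon, given $\nu\ll\mu$ we may write $g=d\nu/d\mu\in L^1(\mu)$ by the Radon--Nikodym theorem. For each $m$ set $g_m=\min(g,m)$ and let $\nu_m$ be the measure with $d\nu_m=g_m\,d\mu$. Then $\nu_m\uparrow\nu$ in total variation, so $\nu_m\to\nu$ weak$^*$; since each $g_m$ is bounded, if I can show every such $\nu_m\in S_{\alpha+1}(K)$ then $\nu$, being a weak$^*$ limit of a sequence from $S_{\alpha+1}(K)$, lands in $S_{\alpha+2}(K)$ --- which is one level too high. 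So I will instead arrange the bounded case to give $\nu_m\in S_\alpha(K)$ (not merely $S_{\alpha+1}$), and then the limit gives $\nu\in S_{\alpha+1}(K)$ as required.

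For the bounded case, the key observation is that $\mu\in S_\alpha(K)$ means $\mu=\lim_k\mu_k$ with each $\mu_k\in S_{\beta_k}(K)$, $\beta_k<\alpha$. The approach is to push the density $g$ (bounded by $m$) back through this approximation. The cleanest route is to approximate $g$ uniformly by simple functions $s=\sum_{i\le N}c_i\chi_{C_i}$ with $C_i$ clopen (using that clopen sets generate the Baire $\sigma$-algebra and $\mu$ is clopen-regular, exactly as invoked in Claim \ref{D} via \cite[322Qc]{fremv3}), so that $\|g-s\|_{L^1(\mu)}$ is small. The measure $s\,d\mu$ is then a fixed real-linear combination $\sum_i c_i(\mu\restriction C_i)$, and by Lemma \ref{3} each $\mu\restriction C_i\in S_\alpha(K)$; I must check that $S_\alpha(K)$ (or rather the cone of non-negative scalar multiples and sums landing back in $P(K)$ after normalisation) is closed under the finite positive combinations that arise, which follows because a finite convex combination of weak$^*$-convergent sequences converges and does not raise the sequential rank.

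The main obstacle, and the step I expect to be most delicate, is the bookkeeping needed to keep the rank at exactly $\alpha$ rather than letting it creep up to $\alpha+1$. Each approximation step (truncating $g$, then approximating by a clopen simple function, then realising $s\,d\mu$ through the defining sequence $(\mu_k)$ of $\mu$) threatens to introduce an extra limit and hence an extra level in the $S_\bullet$ hierarchy. The right way to control this is a diagonal argument: rather than taking three nested limits, I would choose, for each target error $1/j$, a single measure $\rho_j\in\bigcup_{\beta<\alpha}S_\beta(K)$ within total-variation (hence weak$^*$) distance $1/j$ of $\nu$, by simultaneously choosing the truncation level, the simple-function approximation, and an index $k$ large enough in the sequence defining $\mu$; then $\nu=\lim_j\rho_j$ exhibits $\nu\in S_\alpha(K)$ directly in the bounded case, and $\nu\in S_{\alpha+1}(K)$ in general. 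A subtlety to verify is that $\rho_j$ can be taken to be a genuine probability measure (normalising the positive combination) and that the errors in total variation indeed force weak$^*$ convergence, which is immediate since $\bigl|\int f\,d\rho_j-\int f\,d\nu\bigr|\le\|f\|_\infty\,\|\rho_j-\nu\|$.
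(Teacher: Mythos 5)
Your overall architecture (Radon--Nikodym, approximate the density, show that sufficiently nice densities do not raise the sequential rank, then take one more limit) is the right one and matches the paper's, but the step you yourself flag as delicate --- the diagonal argument --- is genuinely broken, and your proof needs it. You claim one can choose, for each $j$, a measure $\rho_j\in\bigcup_{\beta<\alpha}S_\beta(K)$ within \emph{total variation} distance $1/j$ of $\nu$ by additionally passing to a large index $k$ in a sequence witnessing $\mu\in S_\alpha(K)$. This is impossible in general: that witnessing sequence converges only weak$^*$, not in variation. Concretely, for $\alpha=1$ and $\nu=\mu=\mu^{(1)}$ on $K^{(1)}$, any candidate $\rho_j$ built from the finitely supported $\mu_k$ is purely atomic with finite support, while $\nu$ is non-atomic, so $\|\nu-\rho_j\|=2$ for every $j$. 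Retreating to weak$^*$ closeness does not rescue the argument either: to conclude $\rho_j\to\nu$ weak$^*$ you would have to pick $k=k_j$ making $\bigl|\int f\,d\mu_{k_j}-\int f\,d\mu\bigr|$ small \emph{uniformly} over the relevant test functions $f$, which weak$^*$ convergence does not provide. Indeed, if nested weak$^*$ limits could be collapsed by diagonalisation in this way, the hierarchy $S_1\subsetneq S_2\subsetneq\cdots$ that is the subject of the paper would collapse. Your intermediate claim that bounded densities land in $S_\alpha(K)$ exactly is precisely the kind of strengthening the authors explicitly state they do not know how to prove.

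The fix is to drop the truncation entirely, which removes the need to diagonalise. The paper proves by a clean induction on $\alpha$ (no diagonal) that if the density $f$ is \emph{continuous} then $f\,d\mu\in S_\alpha(K)$: a witnessing sequence $(\mu_n)_n$ for $\mu$ yields the witnessing sequence $(f\,d\mu_n)_n$ for $f\,d\mu$, because $\int f\chi_A\,d\mu_n\to\int f\chi_A\,d\mu$ for clopen $A$. Your clopen simple functions are continuous, so your computation $s\,d\mu=\sum_i c_i(\mu\restriction C_i)$ together with Lemma \ref{3} and closure of $S_\alpha$ under normalised finite positive combinations gives the same conclusion for them. Since continuous (equivalently, clopen simple) functions are dense in $L^1(\mu)$ --- with no boundedness restriction --- one chooses $f_n\to g$ in $L^1(\mu)$ directly, obtaining $\nu$ as a \emph{single} total-variation (hence weak$^*$) limit of the measures $f_n\,d\mu\in S_\alpha(K)$, whence $\nu\in S_{\alpha+1}(K)$. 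Your two total-variation approximations (truncation, then simple functions) merge into this one, and the problematic third, weak$^*$, limit never needs to be interleaved.
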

\begin{proof}
By the Radon-Nikodym theorem we can find a function $f\colon K\to \er$ such that
\[ \nu(A) = \int f \cdot \chi_A \ d\mu \]
for every clopen $A\sub K$.
\begin{clm} If $f$ is continuous then $\nu\in S_\alpha(K)$. 
\end{clm}
\begin{proof}
Assume that $\mu\in S_1(K)$ and
$(\mu_n)_n$ is a sequence of finitely supported measures converging to $\mu$. Define
\[ \nu_n(A) = \int f \cdot \chi_A \ d\mu_n \]
for each $n$ and clopen $A\sub K$. Then for every clopen $A\sub K$ the sequence $(\nu_n(A))_n$ converges to $\nu(A)$, 
since each $f\cdot \chi_A$ is continuous. Hence, $(\nu_n)_n$ converges to $\nu$ in the weak$^*$ topology.
It is also easy to check that $\nu_n$ is finitely supported for every $n$. Proceed by induction.
\end{proof}
\noindent
Consider now the general case, when $f\colon K\to \er$ is measurable (with respect to the completion of $\mu$).
Recall that since $K$ is compact and $\mu$ is Radon the space of continuous functions is dense in $L^p(K,\mu)$ whenever $0<p<\infty$ (see \cite[Corollary 4.2.2]{Bogachev}). Therefore, there is a sequence $(f_n)_n$ of real valued continuous functions on $K$ that converges to $f$ in $L^1(\mu)$. For every $n$ and clopen $A\sub K$ define 
\[ \nu_n(A) = \int f_n \cdot \chi_A d\mu. \]
From the claim it follows that the sequence $(\nu_n)_n$ consists of measures from $S_\alpha(K)$. It is also convergent to $\nu$ and so $\nu\in S_{\alpha+1}(K)$.
\end{proof}
\noindent
Theorem \ref{plebanek} is true not only for zero-dimensional spaces, but this assumption slightly shortens the proof and it is enough for our purposes. It is unclear to us if the conclusion of this theorem can be strengthen to $\nu\in S_\alpha(K)$.\\\\
Recall from Section \ref{introduction} that a measure $\nu$ on a Boolean algebra $\scr{A}$ is \emph{non-atomic} if for each $\varepsilon>0$ there exists a finite partition $(A_i)_{i<n}$ of $\scr{A}$ such that $\nu(A_i)<\varepsilon$ for each $i$. By
the Hammer-Sobczyk decomposition theorem every measure on a Boolean algebra is a (unique) sum of a purely atomic measure and a non-atomic one (see \cite{sobczyk} and \cite[Theorem 5.2.7]{rao}). With this in mind we shall henceforth refer to the
\emph{non-atomic (or atomic) part of a measure}. We will also need the following.
\begin{lem} \label{difference} Let $K$ be a Stone space and let $\nu\in S_\alpha(K)$, for some $\alpha$, be a measure that is not purely atomic. If $\nu''$ is the non-atomic part of $\nu$ then \[ \frac{1}{\nu''(K)} \nu'' \in S_\alpha(K).\]
\end{lem}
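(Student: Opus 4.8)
The plan is to start from the Hammer--Sobczyk decomposition $\nu=\nu'+\nu''$, where $\nu'=\sum_n a_n\delta_{x_n}$ is the purely atomic part and $\nu''$ the non-atomic part, and to write $D=\{x_n\colon n\in\omega\}$ for the (countable) set of atoms. First I would record the two facts that drive everything: since $\nu''$ is non-atomic and Radon one has $\nu''(D)=0$, so $\nu''=\nu\restriction(K\setminus D)$, and consequently the Radon--Nikodym density is $d\nu''/d\nu=\chi_{K\setminus D}$. In particular this density is an \emph{indicator} of a co-countable set, and is typically discontinuous; feeding it into Theorem \ref{plebanek} (with $\mu=\nu$) yields only $\frac{1}{\nu''(K)}\nu''\in S_{\alpha+1}(K)$. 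The whole content of the lemma is therefore to save exactly one level, i.e.\ to exhibit $\frac{1}{\nu''(K)}\nu''$ as the limit of a \emph{single} sequence drawn from $\bigcup_{\beta<\alpha}S_\beta(K)$, and the special structure of the density (an indicator supported off a countable set) is what should make this possible.

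Next I would isolate the atoms by clopen sets. For each $m$ I would produce a clopen $C_m$ with $\nu''(C_m)<1/m$ and $\nu'(K\setminus C_m)<1/m$: choose $N$ with $\sum_{n>N}a_n<1/m$, and for each $n\le N$ use zero-dimensionality together with $\nu''(\{x_n\})=0$ and outer regularity of $\nu''$ to pick a clopen $U_n\ni x_n$ with $\nu''(U_n)<1/(m2^{n})$, then set $C_m=\bigcup_{n\le N}U_n$. Since $K\setminus C_m$ is clopen, Lemma \ref{3} gives $\frac{1}{\nu(K\setminus C_m)}\nu\restriction(K\setminus C_m)\in S_\alpha(K)$, and a direct computation on clopen sets (using $\nu'(K\setminus C_m)\to0$ and $\nu''(C_m)\to0$) shows these restrictions converge weak$^*$ to $\frac{1}{\nu''(K)}\nu''$. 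This reproves the $S_{\alpha+1}$ bound but is the skeleton I want to refine.

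To gain the level I would open up the $S_\alpha$-membership of $\nu$: write $\nu=\lim_k\mu_k$ with $\mu_k\in\bigcup_{\beta<\alpha}S_\beta(K)$. For a \emph{fixed} clopen $C_m$ the restriction commutes with the limit, $\mu_k\restriction(K\setminus C_m)\to\nu\restriction(K\setminus C_m)$ weak$^*$ in $k$ (tested on clopen sets $A$, since $A\setminus C_m$ is clopen), and by Lemma \ref{3} each normalized $\mu_k\restriction(K\setminus C_m)$ again lies in $\bigcup_{\beta<\alpha}S_\beta(K)$. This produces a double array in $\bigcup_{\beta<\alpha}S_\beta(K)$ whose iterated limit (first $k\to\infty$, then $m\to\infty$) is $\frac{1}{\nu''(K)}\nu''$. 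The goal is then to extract a single diagonal sequence $\mu_{k(m)}\restriction(K\setminus C_m)$ from this array that still converges to $\frac{1}{\nu''(K)}\nu''$; such a sequence witnesses $\frac{1}{\nu''(K)}\nu''\in S_\alpha(K)$ and completes the argument.

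The hard part will be exactly this diagonalization: the weak$^*$ topology on $P(K)$ is not metrizable, so a single choice $k=k(m)$ must approximate $\nu\restriction(K\setminus C_m)$ simultaneously on \emph{every} clopen set $A$, and there is no \emph{a priori} uniformity in $A$. I would resolve it by reducing to a countably generated setting before diagonalizing: all the measures in play are absolutely continuous with respect to $\nu$, and $\nu\in S_\alpha(K)\subseteq S(K)$ is concentrated on a separable, measure-theoretically countably generated part of $K$ (this is the phenomenon behind Proposition \ref{carrier}). Passing to the corresponding metrizable quotient, the weak$^*$ topology restricted to the relevant countable family becomes metrizable, so the iterated limit genuinely collapses to a diagonal limit. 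Making this reduction precise --- identifying the countably generated subalgebra on which convergence needs to be checked and verifying that the diagonal choice respects it --- is the step I expect to require the most care.
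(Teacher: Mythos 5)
Your skeleton matches the paper's proof up to the point you yourself flag as the hard part: the same clopen sets isolating the atomic part (your $C_m$ are the paper's $A_n$, with $\nu''(A_n)<1/n$ and $\nu'(K\setminus A_n)<1/n$), and the same diagonal candidate $\nu_k\restriction(K\setminus A_k)$ built from a witnessing sequence $(\nu_k)_k$ for $\nu\in S_\alpha(K)$. But at that point your proposal has a genuine gap, and the repair you sketch does not work. The claim that ``all the measures in play are absolutely continuous with respect to $\nu$'' is false: the witnesses lie in $\bigcup_{\beta<\alpha}S_\beta(K)$, and in the base case are finitely supported, hence typically singular with respect to the non-atomic part of $\nu$ (no finitely supported measure is absolutely continuous with respect to a non-atomic measure). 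Moreover Proposition \ref{carrier} gives only a \emph{separable} closed carrier, not a metrizable one: on $K=\beta\omega$ every measure is carried by the separable space $K$ itself, yet weak$^*$ convergence must be tested on all of $\cP(\omega)$ and admits no countable convergence-determining family of clopen sets. So there is no ``metrizable quotient'' in which your iterated limit collapses to a diagonal limit, and your proposed reduction cannot be completed.

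The missing idea --- which is the actual content of the paper's proof --- is that the required uniformity over clopen sets is already built into the construction, with no metrizability needed. Since $\nu(A_n\setminus A_k)\le \nu'(A_n\setminus A_k)+\nu''(A_n\setminus A_k)<1/k+1/n\le 2/n$ for $n<k$, and likewise for $A_k\setminus A_n$, and since each $A_n\triangle A_k$ is a single fixed clopen set on which weak$^*$ convergence may be tested, one can pass to a subsequence of $(\nu_k)_k$ with $\nu_k(A_n\setminus A_k)<2/n$ and $\nu_k(A_k\setminus A_n)<2/n$ for all $n<k$: these are countably many constraints, finitely many per index $k$, so no metrizability is invoked. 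Then for an \emph{arbitrary} clopen $C$, setting $\nu''_k=\nu_k\restriction(K\setminus A_k)$ and $C'=C\setminus A_n$, one has the bounds $|\nu''_k(C)-\nu''_k(C')|\le \nu_k(A_n\setminus A_k)<2/n$, uniformly in $C$, and $|\nu''(C)-\nu''(C')|\le \nu''(A_n)<1/n$, while $|\nu''(C')-\nu''_k(C')|\le \nu'(C')+|\nu(C')-\nu_k(C')|+\nu_k(A_k\setminus A_n)$ is small for large $k$ because $C'$ is one fixed clopen set. Hence the diagonal sequence $(\nu''_k/\nu''_k(K))_k$ converges to $\nu''/\nu''(K)$ on every clopen set, which is exactly the witness you were seeking for membership in $S_\alpha(K)$. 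In short: the symmetric-difference estimate supplies the uniformity in $C$ that your proposal identifies as missing, whereas the absolute-continuity and separability route you propose rests on a false premise and cannot supply it.
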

\begin{proof} Let $\nu'$ be the atomic part of $\nu$. For every $n$ using the non-atomicity of $\nu''$ we can find a clopen subset $A_n$ of $K$ such that $\nu''(A_n)<1/n$ and $\nu'(K\setminus A_n)<1/n$.\\\\
Assume that $(\nu_k)_k$ witnesses that $\nu\in S_\alpha(K)$. For each $k,n$ we have $\nu(A_n\setminus A_k) = \nu'(A_n \setminus A_k) + \nu''(A_n\setminus A_k) < 1/k + 1/n \leq 2/n$ and similarly $\nu(A_k\setminus A_n) < 1/n + 1/k \leq 2/n$. Thus, we can assume (considering a subsequence if needed) that 
$\nu_k(A_n \setminus A_k)<2/n$ and $\nu_k(A_k\setminus A_n)<2/n$ for each $n<k$.
For each $k$ define $\nu''_k = \nu_k |(K\setminus A_k)$. We will show that $(\nu''_k)_k$ converges to $\nu''$. 
Indeed, let $C$ be a clopen subset of $K$ and $\varepsilon>0$. Find $n$ such that $2/n<\varepsilon/5$. 
Denote  $C' = C\setminus A_n$.
Let $K>n$ be such that for every $k>K$ we have $|\nu_k(C')-\nu(C')|<\varepsilon/5$. We have 
\[ |\nu''(C) - \nu''(C')| \leq \nu''(A_n) < \varepsilon/5 \]
and
\[ |\nu''_k(C) - \nu''_k(C')|  =  |\nu_k(C\setminus A_k) - \nu_k(C\setminus (A_n \cup A_k))| 
\leq \nu_k(A_n\setminus A_k) < \varepsilon/5. \]
Observe that for $k>K$
\[ |\nu''(C') - \nu''_k(C')| \leq 
|\nu''(C') - \nu(C')| + |\nu(C') - \nu_k(C')| + |\nu_k(C') - \nu''_k(C')| \leq \]
\[ \leq \nu'(C') + \varepsilon/5 + |\nu_k(C\setminus A_n) - \nu_k(C\setminus (A_n \cup A_k))| \leq \]
\[ \leq 2\varepsilon/5 + \nu_k(A_k\setminus A_n) < 3\varepsilon /5. \]
Therefore, $|\nu''(C) - \nu''_k(C)| < \varepsilon$ for each $k>K$ and $(\nu''_k)_k$ converges to $\nu''$. Hence, the sequence $(\nu_k/\nu''_k(K))_k$ witnesses that $\nu''/\nu''(K)\in S_\alpha(K)$.
\end{proof}
\noindent
In what follows, according to the notation of Theorem \ref{mainfromone}, by $K$ and $\mu$ we will mean $K^{(\alpha)}$ and $\mu^{(\alpha)}$ for some $\alpha\in \om_1$. In the case when $\alpha = 1$ by $B_n$ we will mean the singleton $\{n\}$. We will
also denote $\bigcup_{j\leq n} B_n$ by  $B'_n$. Recall that by $R$ we denote the closed subspace of $K$ which supports $\mu$.\\\\
Every measure $\nu\in P(K)$ can be decomposed in the following way: $\nu = \nu^0+\nu^1+\nu^2$, where $\nu^0$ is the $L_0$ part of $\nu$, $\nu^1$ is the atomic part of $\nu-\nu^0$ and $\nu^2$ is the non-atomic part of $\nu-\nu^0$. Of course here $\nu-\nu^0$ is just the $L_1$ part of $\nu$. We shall say that $\nu$ is \emph{almost absolutely continuous with respect to $\rho$}, and we write $\nu\ll^* \rho$, if
$\nu^2 \ll \rho$.\\\\
The following result says that $\ll^*$, with respect to some $K$ and $\mu$, is inherited by limits of convergent sequences of measures.
\begin{thm} \label{first}
Let $(\nu_n)_n$ be a sequence of measures in $P(K)$. If for each $n$ we have $\nu_n\ll^* \mu$ and if $(\nu_n)_n$ converges to a non-atomic $\nu\in P(K)$, then $\nu \ll^* \mu$.
\end{thm}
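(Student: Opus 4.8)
The plan is to prove the contrapositive-style decomposition statement directly. We want to show that if each $\nu_n \ll^* \mu$ (i.e.\ the non-atomic $L_1$-part $\nu_n^2$ is absolutely continuous with respect to $\mu$) and $\nu_n \to \nu$ with $\nu$ non-atomic, then $\nu \ll^* \mu$, which here means $\nu^2 \ll \mu$. Since $\nu$ is assumed non-atomic, the atomic part $\nu^1$ vanishes, so $\nu = \nu^0 + \nu^2$, and the real content is to control the non-atomic $L_1$-part $\nu^2$ of the limit. The key structural input is the description of $R = \supp(\mu)$ from Claim~\ref{D}: $\mu$ lives on a copy of $\mathrm{Stone}(\gM)$, and most importantly, for every countable $S \subseteq R$ and every $\delta > 0$ there is a clopen $F_0 \in \scr{B}$ with $\mu(F_0) > 1-\delta$ and $F_0 \cap S = \emptyset$. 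This is the tool that lets one separate countably-supported mass from $\mu$.

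\textbf{First} I would reduce to understanding where $\nu^2$ can concentrate. The obstruction to $\nu^2 \ll \mu$ is the existence of a Borel set $E$ with $\mu(E)=0$ but $\nu^2(E) > 0$. Because $\mu$ is carried by $R$ and is (up to isomorphism) Lebesgue measure there, and because the $\mu$-null clopen sets are exactly the complements of the sets $F_0$ furnished by Claim~\ref{D}, I would try to show that any singular-to-$\mu$ non-atomic mass of $\nu$ must already have been present as singular non-atomic mass in the $\nu_n$ — contradicting $\nu_n \ll^* \mu$. Concretely, suppose $\nu^2 \not\ll \mu$; then there is $\eta > 0$ and a sequence of clopen sets $G_j$ (or a $G_\delta$/Baire set obtained by Radon regularity) with $\mu(G_j) \to 0$ but $\nu^2(G_j) \geq \eta$ for all $j$. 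Using clopen regularity of $\mu$ (as in Claim~\ref{D}) I can arrange these $G_j$ to be clopen and shrinking against $\mu$. The plan is then to pass this singular behaviour back along the convergent sequence: for the clopen sets $G_j$ we have $\nu_n(G_j) \to \nu(G_j)$, and I would combine this with the decomposition $\nu_n = \nu_n^0 + \nu_n^1 + \nu_n^2$ to locate where in the $\nu_n$ this $\eta$-much singular non-atomic mass resides.

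\textbf{The hard part} will be disentangling the three components in the limit, because weak$^*$ convergence does not respect the $L_0/L_1$ splitting or the atomic/non-atomic splitting in any automatic way — mass can migrate between the pieces across the limit. In particular, atomic mass in the $\nu_n^1$ could in principle accumulate to produce non-atomic mass in $\nu^2$, and $L_0$-mass concentrated on $\widehat{\bigcup}_n B_n$ could in the limit leak toward $R$. To handle this I would lean on the orthogonality machinery of Theorem~\ref{Main} together with Claim~\ref{claim2}: measures that are strongly orthogonal to $\mu$ can be uniformly pushed off a set in $\FF$ of large $\mu$-measure, and this is precisely the mechanism that prevents singular-to-$\mu$ mass from surviving into $\nu^2$ as genuinely $\mu$-absolutely-continuous mass. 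The endgame is to argue that the only way $\nu$ can acquire non-atomic mass on a $\mu$-null set is for the $\nu_n^2$ to have carried comparable singular mass, contradicting $\nu_n^2 \ll \mu$.

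\textbf{I expect the main obstacle} to be precisely the control of the non-atomic part under limits, i.e.\ ruling out that a sequence of atomic (or $L_0$) measures fuses into singular non-atomic mass orthogonal to $\mu$. I would isolate this in a lemma of the form: if $\lambda_k$ is non-atomic, $L_1$, and strongly orthogonal to $\mu$, then along the relevant subsequence one can find $F \in \FF$ with $\mu(F) \approx 1$ and $\lambda_k(F)$ small, so that any weak$^*$ limit of such $\lambda_k$ remains orthogonal to $\mu$ rather than absolutely continuous — this is essentially Claim~\ref{claim3} applied to $\nu_n^2$ minus its $\mu$-absolutely-continuous part. Granting that, the non-atomic part $\nu^2$ of the limit splits as an $L^1(\mu)$-piece plus a piece orthogonal to $\mu$, and the hypothesis $\nu_n \ll^* \mu$ forces the orthogonal piece to be $0$, yielding $\nu^2 \ll \mu$ and hence $\nu \ll^* \mu$.
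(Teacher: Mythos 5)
Your high-level strategy (argue by contradiction, use clopen regularity to produce clopen sets $G_j$ with $\mu(G_j)\to 0$ but $\nu^2(G_j)\ge \eta$, then trace this singular mass back through the convergent sequence) matches the paper's opening moves (Lemma \ref{findseq}), and you correctly identify the central danger: mass migrating between the $L_0$, atomic and non-atomic components across the limit. But the mechanism you propose for defusing that danger does not work. You want to apply the orthogonality machinery of Claims \ref{claim2} and \ref{claim3} to whatever parts of the $\nu_n$ are singular to $\mu$; those claims, however, require \emph{strong} orthogonality to $\mu$, i.e.\ that almost all of the mass can be pushed onto a set of $\mu$-measure exactly zero. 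The atomic parts $\nu_n^1$ need not have this property: their atoms are arbitrary ultrafilters of $K\setminus\widehat{\bigcup}_n B_n$ and may lie inside $R=\mathrm{supp}(\mu)$, where every clopen neighbourhood has strictly positive $\mu$-measure (since $\mu\restriction R$ is a copy of $\lambda$ on $\mathrm{Stone}(\gM)$, which is strictly positive). Such point masses are orthogonal but not strongly orthogonal to $\mu$, so Claim \ref{claim3} yields no set $F\in\cF$ uniformly avoiding them, and your concluding step ("the hypothesis forces the orthogonal piece to be $0$") is precisely the assertion to be proved, with no mechanism behind it. Note also that "Claim \ref{claim3} applied to $\nu_n^2$ minus its $\mu$-absolutely continuous part" is vacuous: by hypothesis $\nu_n^2\ll\mu$, so that difference is zero; the threat comes entirely from $\nu_n^0$ and $\nu_n^1$.

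What the paper does instead, and what is missing from your outline, is twofold. First, it collects the countably many atoms of all the $\nu_k^1$ and, via Lemma \ref{newlem} (which rests on Lemma \ref{p4} and a structural analysis of the bricks of $K^{(\alpha)}$), encloses them together with $R$ in a closed \emph{extremely disconnected} subspace $R'$; the completeness of $\mathrm{Clopen}(R')$ is what later permits forming the supremum $\bigvee_i G_{2i}$ and hence a single clopen $H\in\scr{B}$. Second, it runs a diagonal construction producing sequences $(k_i)$, $(n_i)$, $(Z_i)$, $(F_i)$ satisfying conditions (a)--(h), which simultaneously control the $L_0$ parts (via the annuli $B'_{n_{i+1}}\setminus B'_{n_i}$), the atomic parts (via $R'$), and the non-atomic parts (via $\nu_k^2\ll\mu$ and the sets $A_k$), so that $\nu_{k_i}(H)$ oscillates by a fixed positive amount and convergence fails; it also needs Proposition \ref{L0} as a separate ingredient to reduce to the case $\nu^2(R')=1$. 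None of these steps has an analogue in your proposal, so as it stands the argument has a genuine gap at its core.
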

\noindent
We postpone the proof of Theorem \ref{first} until the end. For now we have the following corollary.
\begin{cor}\label{everythingisalmostabsolutlycontinuoustomu}
	If $\nu\in S(K)$ then $\nu \ll^* \mu$.
\end{cor}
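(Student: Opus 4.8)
The plan is to deduce Corollary \ref{everythingisalmostabsolutlycontinuoustomu} from Theorem \ref{first} by an induction on the sequential rank $\alpha$, showing that $\nu \in S_\alpha(K)$ implies $\nu \ll^* \mu$ for every $\alpha \le \omega_1$. First I would address the base case $S_0(K)$: a finitely supported measure $\nu$ has no non-atomic part, so $\nu^2 = 0$ and $\nu^2 \ll \mu$ holds trivially; hence $\nu \ll^* \mu$ for all $\nu \in S_0(K)$.

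For the successor/limit step, suppose $\nu \in S_\alpha(K)$, so there is a sequence $(\nu_n)_n$ with each $\nu_n \in S_{\beta_n}(K)$ for some $\beta_n < \alpha$ and $\nu_n \to \nu$ in the weak$^*$ topology. By the inductive hypothesis each $\nu_n \ll^* \mu$. The goal is to conclude $\nu \ll^* \mu$, i.e. $\nu^2 \ll \mu$ where $\nu^2$ is the non-atomic $L_1$ part of $\nu$. The subtlety is that Theorem \ref{first} is stated for a limit of $L_0$-almost-continuous measures converging to a \emph{non-atomic} $\nu$, whereas a general $\nu \in S(K)$ need not be non-atomic. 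So the key reduction is to pass from $\nu$ to its non-atomic part. Writing $\nu = \nu^0 + \nu^1 + \nu^2$ as in the paper, the part that must be controlled is $\nu^2$, the non-atomic piece of the $L_1$ part; note $\nu^0$ lives over $\widehat{\bigcup}_n B_n = R$ and contributes nothing to the condition $\nu^2 \ll \mu$.

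Here I would invoke Lemma \ref{difference}: since $\nu \in S_\alpha(K)$, its non-atomic part (suitably normalised) again lies in $S_\alpha(K)$, and in fact we can realise this non-atomic part as a weak$^*$ limit of measures of rank $<\alpha$. The plan is to combine Lemma \ref{difference} with the decomposition so that the normalised non-atomic part $\nu''/\nu''(K)$ is a non-atomic limit of a sequence $(\tilde\nu_n)_n$ of lower-rank measures, each of which is $\ll^* \mu$ by the inductive hypothesis; Theorem \ref{first} then yields $\nu''/\nu''(K) \ll^* \mu$, and since absolute continuity and the $L_1/L_0$ splitting are unaffected by scaling, this gives $\nu^2 \ll \mu$, i.e. $\nu \ll^* \mu$.

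The main obstacle I anticipate is bookkeeping the interaction between the three-part decomposition $\nu = \nu^0+\nu^1+\nu^2$ and the hypotheses of Theorem \ref{first}: one must check that the non-atomic part of $\nu$ coincides (up to the $L_0$ contribution, which is supported on $R$ and is irrelevant to $\ll^* \mu$) with the limit furnished by Lemma \ref{difference}, and that the approximating sequence $(\tilde\nu_n)_n$ genuinely consists of measures satisfying $\tilde\nu_n \ll^* \mu$ rather than merely lying in $S_{\beta_n}(K)$. Once the induction is set up so that each approximant inherits $\ll^*\mu$, the limit $\alpha=\omega_1$ gives every $\nu \in S(K) = S_{\omega_1}(K)$ the property $\nu \ll^* \mu$, which is exactly the corollary.
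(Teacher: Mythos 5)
Your proposal is correct and follows essentially the same route as the paper: induction on $\alpha$ with the purely atomic base case, Lemma \ref{difference} to replace $\nu$ by its normalised non-atomic part so that the non-atomicity hypothesis of Theorem \ref{first} is met, and the inductive hypothesis applied to the lower-rank approximating sequence to verify $\tilde\nu_n\ll^*\mu$. The bookkeeping you flag (that the non-atomic part's $L_1$ non-atomic piece coincides with $\nu^2$, so the conclusion transfers back to $\nu$) is exactly the final step of the paper's argument.
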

\begin{proof}
Of course, if $\nu\in S_0(K)$, then $\nu$ is purely atomic and thus $\nu\ll^*\mu$. Suppose
now that $\nu\in S_\alpha(K)$ and for each $\beta<\alpha$ all measures from $S_\beta(K)$ are almost absolutely continuous with respect to $\mu$.
By Lemma \ref{difference} the non-atomic part $\nu'$ of $\nu$ is in $S_\alpha(K)$.
Let $(\nu'_n)_n$ be a sequence from $\bigcup_{\beta<\alpha}S_\beta(K)$ convergent to $\nu'$.
According to Theorem \ref{first} we have $\nu'\ll^* \mu$. But this means that $\nu\ll^* \mu$.
\end{proof}
\noindent
With this we can complete the proof of Theorem \ref{mainfromone}.
\begin{proof}[Proof of (\ref{m2})]	
Let $\nu \in S(K^{(1)})$ and let $\nu'$ and $\nu''$ be the atomic and non-atomic parts of $\nu$, respectively. Since all measures in $L_0(K^{(1)})$ are purely atomic we must have $\nu'' = \nu^2$. By Corollary \ref{everythingisalmostabsolutlycontinuoustomu} then $\nu''\ll \mu^{(1)}$ and so by Theorem \ref{plebanek} we must have $\nu''\in S_2(K^{(1)})$. Since $\nu'\in S_1(K^{(1)})$ we must have $\nu = \nu_1+\nu_2 \in S_2(K^{(1)})$.\\\\
Assume now that $2\leq \alpha<\omega_1$ and that for each $1\leq \beta<\alpha$ we have $S(K^{(\beta)}) = S_{\beta+1}(K^{(\beta)})$. Let $\nu \in S(K^{(\alpha)})$. Again by Corollary \ref{everythingisalmostabsolutlycontinuoustomu} we have $\nu^2\ll \mu^{(\alpha)}$. By the inductive hypothesis, for each $n$, we have $\nu^0\restriction B_n \in S_{\alpha}(K^{(\alpha)})$. Since $\nu^0$ is the limit of the measures 
$$
\sum_{k\leq n}\nu^0\restriction B_k,
$$
appropriately rescaled, we must have $\nu^0\in S_{\alpha+1}(K^{(\alpha)})$. Clearly $\nu^1\in S_1(K^{(\alpha)})$ and by Theorem \ref{plebanek} we have $\nu^2\in S_{\alpha+1}(K^{(\alpha)})$. Thus $\nu\in S_{\alpha+1}(K^{(\alpha)})$ and we are done.
\end{proof}
\noindent
Towards the proof of Theorem \ref{first}, we will first consider the particular case when the measures concerned are from $L_0$.
\begin{prop} \label{L0}
Let $(\nu_n)_n$ be a sequence of measures in $P(K)$. If each $\nu_n\in L_0$ and $(\nu_n)_n$ converges to a measure $\nu$, 
then $\nu \ll^* \mu$.
\end{prop}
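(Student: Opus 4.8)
The plan is to discard everything except the non-atomic $L_1$ part $\nu^2$ of the limit, since $\nu\ll^*\mu$ means exactly $\nu^2\ll\mu$ and the pieces $\nu^0,\nu^1$ are irrelevant. Recall that $\mu$ is concentrated on $R$, that $R$ is the set of ultrafilters containing $\cF$ (so $R=\bigcap_{F\in\cF}F$ and $\mu(F)=1$ for every $F\in\cF$), and that $(\mathrm{Clopen}(R),\mu)\cong(\gM,\lambda)$. I would argue by contradiction: take the Lebesgue decomposition $\nu^2=\nu^2_{ac}+\nu^2_s$ with $\nu^2_{ac}\ll\mu$ and $\nu^2_s\perp\mu$, and aim to show $\nu^2_s=0$.

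First I would encode the action of $\nu$ on the generating family $\cF'$ as a single measure on $\mathrm{Stone}(\gM)$. Writing $G_M=\bigcup_n\varphi_n(M)^n\in\cF'$ and letting $C_M$ be the clopen subset of $\mathrm{Stone}(\gM)$ coded by $M$, the map $M\mapsto G_M$ is a Boolean homomorphism $\gM\to\scr{B}$, so $\rho_k(C_M):=\nu_k(G_M)$ defines a finitely additive probability on $\mathrm{Clopen}(\mathrm{Stone}(\gM))$ (it is a probability because $\nu_k\in L_0$ gives $\nu_k(G_{\top})=\nu_k(\widehat{\bigcup}_nB_n)=1$), hence a Radon probability measure $\rho_k$. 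Since the $C_M$ exhaust the clopen algebra and $\nu_k(G_M)\to\nu(G_M)$, the $\rho_k$ converge weak$^*$ to a probability $\rho$ on $\mathrm{Stone}(\gM)$ with $\rho(C_M)=\nu(G_M)$ for all $M$. This identity is the bridge: reading off absolute continuity of $\nu^2$ with respect to $\mu$ should amount to locating $\nu^2$ inside $\rho$ and showing the corresponding mass is $\lambda$-absolutely continuous.

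The main obstacle is that $\nu(G_M)$ also records the in-block mass $\nu^0(G_M)$ and the atomic-at-infinity mass $\nu^1(G_M)$, so $\rho$ cannot be equated with $\nu^2$ directly. My plan is to peel these off using $\cF$: because $\mu(F)=1$ and $R\subseteq F$ for each $F\in\cF$, a diagonally chosen $F\in\cF$ --- built as in the constructions of $F_1$ and $F_2$ in Claims \ref{claim3} and \ref{claim4}, coordinated with the rate at which $\mu'_n(\,\cdot\cap B_n)\to\mu$ --- annihilates the in-block contribution in the limit while leaving the infinity mass essentially intact, and the atomic part is controlled separately using non-atomicity of $\nu^2$. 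Having isolated $\nu^2$, I would exploit mutual singularity: since $\nu^2_s\perp\mu$ and both measures are Radon on a zero-dimensional space, for every $\delta>0$ there is a clopen $V$ with $\nu^2_s(V)>\nu^2_s(K)-\delta$ and $\mu(V)<\delta$, i.e. $\nu^2_s$ is strongly orthogonal to $\mu$. Passing to the $\cF'$-level and applying the $\cF$-escape technique of Claim \ref{claim2} should then produce an $F\in\cF$ with $\nu^2_s(F)$ arbitrarily small; since $R\subseteq F$ forces the genuine infinity mass to sit inside every such $F$, this squeezes $\nu^2_s(K)\le\delta$ for all $\delta$, whence $\nu^2_s=0$ and $\nu^2\ll\mu$.

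I expect two delicate points. The first is justifying that intersecting with a member of $\cF$ separates the in-block and infinity contributions in the limit rather than merely in each $\nu_k$; this needs a careful diagonal choice of $F$ against the block measures $\mu'_n$. The second is that the separation lemmas are stated for sequences of measures orthogonal to the $\mu'_n$ block by block, not for the single limit $\nu^2_s$; I would bridge this by approximating $\nu^2_s$ from within by the conditional block restrictions of the $\nu_k$ along a subsequence realising the limit, thereby reattaching the single measure to a genuine sequence to which Claim \ref{claim2} applies.
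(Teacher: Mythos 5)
Your proposal breaks down at its central transport step. The map $M\mapsto G_M=\bigcup_n\varphi_n(M)^n$ is \emph{not} a Boolean homomorphism: the paper's notion of metric isomorphism only makes $A\mapsto[\varphi_n(A)]_{\mathrm{Null}(\mu_n)}$ a homomorphism, so $G_{M_1}\cap G_{M_2}$ and $G_{M_1\cap M_2}$ may differ by a set whose trace on each $B_n$ is merely $\mu'_n$-null. Your measures $\nu_k\in L_0$ are concentrated on $\widehat{\bigcup}_n B_n$, which is exactly the $\mu$-null region (each $B_n$ is $\mu$-null) where these representative ambiguities live, and they can charge such differences fully: already for $\nu_k=\delta_j$ with $j\in\omega$, whether $j\in G_M$ depends on the chosen representative $\varphi_n(M)$, so $\rho_k(C_M):=\nu_k(G_M)$ is not finitely additive and even $\rho_k(\mathrm{Stone}(\gM))=1$ can fail. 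The same structural point defeats your fallback on the orthogonality machinery: Proposition \ref{L0} carries \emph{no} orthogonality hypothesis on the $\nu_k$ (their block restrictions may equal the $\mu'_n$ themselves), so the hypotheses of Claim \ref{claim2}/Claim \ref{claim4} (block-wise orthogonality to $\mu'_n$) can simply be false for any sequence you manufacture from the $\nu_k$. You also conflate orthogonality with strong orthogonality: a clopen $V$ with $\mu(V)<\delta$ is not the $\mu(V)=0$ witness that Claim \ref{claim3} consumes, and mutual singularity of Radon measures does not in general yield exact-null clopen witnesses --- this is precisely why the paper proves Lemma \ref{findseq} and works with decreasing sequences $D_0\supseteq D_1\supseteq\cdots$ with $\mu(D_n)\to 0$ instead.

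The final squeeze is also a non sequitur: producing, for each $\delta$, some $F\in\cF$ with $\nu^2_s(F)<\delta$ says nothing about $\nu^2_s(K)$, because it is false that ``the infinity mass sits inside every $F\in\cF$'' --- $R$ is the set of ultrafilters \emph{containing} $\cF$, but $L_1$ measures need not concentrate on $R$ (atoms can sit at ultrafilters off $R$, and a singular non-atomic measure, even one living on $R$, is exactly the kind of mass not pinned down by $\cF$). The paper's actual proof uses a mechanism your outline never touches: an oscillation argument against weak$^*$ convergence. Assuming $\nu^2\not\ll\mu$, it takes $Z_i\in\scr{B}$ with $\mu(Z_i)<1/(i+1)$ but $\nu^2(Z_i)>\varepsilon$; since each $\nu_k\in L_0$ puts essentially all its mass in some finite block union $B'_m$ while the mass $\nu^2(Z_i)$ survives at infinity, one interleaves strips $E_i=Z_i\cap(B'_{n_{i+1}}\setminus B'_{n_i})$ so that along a subsequence the $\nu_{k_i}$ alternately charge and avoid the single set $E=B'_{n_0}\cup\bigcup_i E_{2i}$ with a fixed gap (above $r+\varepsilon$ at even stages, below $r+\varepsilon/2$ at odd ones, $r$ calibrated to $\nu^0(K)$). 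The $\mu$-smallness of the $Z_i$ is used exactly where your approach has no substitute: it forces $\mu'_n(E\cap B_n)\to 0$, so the complement of $E$ lies in $\cF$ and hence $E\in\scr{B}$, making $E$ a legitimate test set on which convergence of $(\nu_k)_k$ is contradicted. Without this idea the proof does not close.
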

\begin{proof}
Assume for the contradiction that there is an $\varepsilon>0$ such that for each $i\in\scr{N}$ there is $Z_i\in \scr{B}$ such that $\mu(Z_i) <1/(i+1)$ and $\nu^2(Z_i)>\varepsilon$. Let $Z_0=\om$. If $\nu^0(K) = 0$ then, in what follows, let $r = 0$. Otherwise choose $r>0$ such that  $r<\nu^0(K)<r+\varepsilon/2$. See to it that $r+\varepsilon <1$.\\\\
We construct inductively increasing sequences $(n_i)_i$ and $(k_i)_i$ of natural numbers such that
\begin{itemize}
    \item[(1)] $\mu_n(Z_i)<1/(i+1)$ for each $n>n_i$ with  $i>0$;
	\item[(2)] $\nu_{k_i}\left( B'_{n_0} \cup ((B'_{n_{i+1}} \setminus B'_{n_i}) \cap Z_i)\right) > r+\varepsilon$ for each even $i$;
    \item[(3)] $\nu_{k_i}( B'_{n_{i+1}} \setminus B'_{n_i})  \geq 1-r-\varepsilon/2$ for each odd $i$.
\end{itemize}
Let $n_0$ be such that $\nu^0(B'_{n_0})\geq r$ and take $k_0$ such that $\nu_k(B'_{n_0})\geq r$ for each $k\geq k_0$.
Let $n_1$ be such that $\mu_n(Z_1)<1/2$ for each $n>n_1$ and $\nu_{k_0}(B'_{n_1} \cap Z_0)>r+\varepsilon$. 
This is possible since $\nu_{k_0}(Z_0) = 1$ and $\nu_{k_0}\in L_0$.\\\\
Assume now
 that we have constructed $k_i$ and $n_{i+1}$ for even $i$. Since $\nu(B'_{ n_{i+1}}) = \nu^0(B'_{n_{i+1}})$ there exists $k_{i+1} > k_i$ such that 
\[ \nu_{k_{i+1}} (B'_{n_{i+1}}) < r + \varepsilon/2. \]
Now find $n_{i+2}>n_{i+1}$ and for every $n>n_{i+2}$ we have 
\[ \nu_{k_{i+1}}(B'_{n_{i+2}} \setminus B'_{n_{i+1}})\geq 1- r - \varepsilon/2\] and 
\[ \mu_n(Z_{i+2})<1/(i+3). \]
Since $\nu(Z_{i+2}\setminus B'_{n_{i+2}})\geq \nu^2(Z_{i+2}\setminus B'_{n_{i+2}}) = \nu^2(Z_{i+2})>\varepsilon$ we can find $k_{i+2} > k_{i+1}$ such that
\[ \nu_{k_{i+2}} \left( B'_{n_0} \cup (Z_{i+2} \setminus B'_{n_{i+2}})\right) > r + \varepsilon. \]
Let $n_{i+3}>n_{i+2}$ be such that \[ \nu_{k_{i+2}}\left(B'_{n_0}\cup ( (B'_{n_{i+3}} \setminus B'_{n_{i+2}}) \cap Z_{i+2})\right)>r+\varepsilon\] and 
\[ \mu_n(Z_{i+3})<1/(i+4) \] for every $n>n_{i+3}$.\\\\
Finally, let 
\[ E = B'_{n_0} \cup \left(\bigcup_{i} (B'_{n_{2i+1}} \setminus B'_{n_{2i}}) \cap Z_{2i}\right). \]
Then $E\in \scr{B}$ because of (1) and the fact that $E\cap B_n\in \scr{B}_n$ for each $n$. By (2) we have that $\nu_{k_i}(E)>r+\varepsilon$ for each even $i$ and (3) implies that $\nu_{k_i}(E)<r+\varepsilon/2$ for each odd $i$. This shows that the sequence 
$(\nu_k)_k$ does not converge, a contradiction.
\end{proof}\noindent
We will need three more lemmas.
\begin{lem}\label{findseq} If $\nu^2 \not\ll \mu$ then there exists a sequence of clopen sets $D_0\supseteq D_1\ldots$ 
	such that $\mu(D_n)\rightarrow 0$ and $\nu(\widehat{\bigcap}_n D_n)>0$, where the intersection is taken in $K$.
\end{lem}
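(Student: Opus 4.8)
The goal is to produce a decreasing sequence of clopen sets $D_0 \supseteq D_1 \supseteq \cdots$ with $\mu(D_n) \to 0$ but $\nu(\widehat{\bigcap}_n D_n) > 0$, under the hypothesis $\nu^2 \not\ll \mu$. The starting point is to unwind what $\nu^2 \not\ll \mu$ means: there is a $\mu$-null Borel set $N$ with $\nu^2(N) = \varepsilon > 0$. Since $\mu$ is a Radon measure on a Stone space and $\mu$-null sets are approximable from outside by clopen sets, I would first produce, for each $n$, a clopen set $E_n$ with $N \subseteq E_n$ and $\mu(E_n) < 2^{-n}$ (say), so that $\mu(E_n) \to 0$ while $\nu^2(E_n) \geq \nu^2(N) = \varepsilon$ for every $n$. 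Replacing $E_n$ by $D_n = \bigcap_{k \le n} E_k$ makes the sequence decreasing and keeps $\mu(D_n) \to 0$; the real content is to show the $\nu$-mass does not escape in the limit.

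The heart of the matter is the passage from $\nu(D_n) \ge \varepsilon$ for each $n$ to $\nu(\widehat{\bigcap}_n D_n) > 0$. Here the crucial subtlety is that $\widehat{\bigcap}_n D_n$ is the intersection taken in the Stone space $K$, which can be strictly larger than the clopen set corresponding to the Boolean intersection (if it even exists in $\scr{B}$), and it is precisely this ``Stone-space intersection'' that carries the surviving mass. The clean way to argue is via outer regularity of the Radon measure $\nu$: since each $D_n$ is clopen (hence closed) and the $D_n$ are decreasing, the Stone-space intersection $\widehat{\bigcap}_n D_n = \bigcap_n D_n$ is closed, and by continuity of the measure along a decreasing sequence of closed sets (using $\sigma$-additivity of the Radon measure $\nu$ and compactness of $K$) one gets $\nu\bigl(\widehat{\bigcap}_n D_n\bigr) = \lim_n \nu(D_n) \ge \varepsilon > 0$.

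So the skeleton is: (i) extract a $\mu$-null witness set $N$ with $\nu^2(N) = \varepsilon > 0$ from the failure of absolute continuity; (ii) use clopen outer regularity of $\mu$ to trap $N$ inside clopen sets of $\mu$-measure tending to $0$, intersecting to make them decreasing; (iii) observe each such clopen set still has $\nu$-measure at least $\varepsilon$ because it contains $N$; (iv) invoke continuity of the Radon measure $\nu$ from above along the decreasing sequence of closed sets to conclude $\nu\bigl(\widehat{\bigcap}_n D_n\bigr) \ge \varepsilon$.

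\textbf{Expected obstacle.} The delicate point, and the one I would check most carefully, is the interplay between Borel sets in the Stone space and the clopen approximation in step (ii). The hypothesis $\nu^2 \not\ll \mu$ gives a $\mu$-null set that is a priori only Borel, and one must be sure it can be enclosed in clopen sets of arbitrarily small $\mu$-measure \emph{while} retaining positive $\nu^2$-mass; this relies on $\mu$ being clopen-regular (as used for $\mu$ in Claim \ref{D}) so that $\mu$-null Borel sets sit inside clopen sets of vanishing $\mu$-measure. A secondary care is that $\nu^2$-mass, not merely $\nu$-mass, survives: since $\nu^2 \le \nu$ as measures, $\nu(D_n) \ge \nu^2(D_n) \ge \nu^2(N) = \varepsilon$, so the bound transfers harmlessly to $\nu$, and the conclusion $\nu\bigl(\widehat{\bigcap}_n D_n\bigr) > 0$ follows as stated.
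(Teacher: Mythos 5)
This is essentially the paper's argument: take a $\mu$-null Borel set $D'$ with $\nu^2(D')>0$, enclose it in clopen sets of $\mu$-measure tending to $0$ using clopen regularity, pass to finite intersections to make the sequence decreasing, and note that $D'\subseteq\widehat{\bigcap}_n D_n$ already forces $\nu(\widehat{\bigcap}_n D_n)\geq\nu^2(D')>0$ (so your appeal to continuity from above is not even needed). The one refinement the paper makes, and which you should adopt, is to take the witness set $D'$ inside $R$: clopen outer regularity is a property of $\mu\restriction R\cong(\mathrm{Stone}(\gM),\lambda)$ rather than of Radon measures on Stone spaces in general (on $\beta\omega$ the only clopen superset of $\omega$ is $\beta\omega$ itself), and restricting to $R$ is harmless here because $\nu^2$ is concentrated on $R$.
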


\noindent
\begin{proof}
Let $D'\sub R$ be a Borel set such that $\mu(D')=0$ and $\nu^2(D')>0$. Since $\mu\restriction R$ is clopen regular we can find 
a sequence $(D'_n)_n$ from $\scr{B}$ such that $\mu(D'_n) \rightarrow 0$ and $D'\sub D'_n$ for each $n$. Now let $D_n = \widehat{\bigcap}_{i\leq n}D'_i$.
\end{proof}

\noindent
\begin{lem}\label{p4} If $V\subseteq K\setminus \widehat{\bigcup}_i B_i$ is a countable set such that for each $v\in V$ there exists an $A\in v$ such that $\mu(A) = 0$, 
then there exists a clopen $A$ such that $V\subseteq A$ and $\mu(A) = 0$.
\end{lem}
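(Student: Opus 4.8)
The plan is to build the required clopen set as an amalgamation of the individual null witnesses for the points of $V$, after truncating each witness so that the pieces live on disjoint tails of the blocks $B_n$. The truncation is what both keeps each witness inside its ultrafilter and lets me control the total measure.

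First I would record the characterisation that does the bookkeeping. Recall that every member of $\scr{B}$ meets each block in a set of $\scr{B}_n^n$ (this holds for the generators $\cF$ and $\cF'$ and is preserved by Boolean operations, since $B_n\in\scr{B}_n^n$). I claim: if $A\subseteq\omega$ satisfies $A\cap B_n\in\scr{B}_n^n$ for every $n$ and $\mu'_n(A\cap B_n)\to 0$, then $A\in\scr{B}$ and $\mu(A)=0$. Indeed $(\omega\setminus A)\cap B_n=B_n\setminus(A\cap B_n)\in\scr{B}_n^n$ and, since $\mu'_n$ has total mass $1$ on $\scr{B}_n^n$, $\mu'_n((\omega\setminus A)\cap B_n)=1-\mu'_n(A\cap B_n)\to 1$; hence $\omega\setminus A\in\cF\subseteq\scr{B}$, so $A\in\scr{B}$, and $\mu(A)=\lim_n\mu'_n(A\cap B_n)=0$. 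So it suffices to produce such an $A$ with $A\in v$ for every $v\in V$.

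Enumerate $V=\{v_k:k\in\omega\}$ and, using the hypothesis, fix for each $k$ a set $A_k\in v_k$ with $\mu(A_k)=0$; in particular $A_k\in\scr{B}$, so $A_k\cap B_n\in\scr{B}_n^n$ and $\mu'_n(A_k\cap B_n)\to 0$. The one structural fact I use about $V$ is that $v_k\notin\widehat{\bigcup}_iB_i$ gives $B_i\notin v_k$, hence $\omega\setminus B'_m\in v_k$ for every $m$. This lets me replace $A_k$ by the truncation $A_k^*:=A_k\setminus B'_{m_k}$, which still lies in $v_k$ (as the intersection of the two members $A_k$ and $\omega\setminus B'_{m_k}$ of the ultrafilter) and which meets only the blocks $B_n$ with $n>m_k$. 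Setting $A:=\bigcup_kA_k^*$, one has $A_k^*\subseteq A$ and so $A\in v_k$ for each $k$, i.e. $V\subseteq A$.

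The heart of the argument is choosing the cut-offs $m_k$ so that $A$ lands in $\scr{B}$ and is $\mu$-null, and this is the main obstacle. Because of the truncations, $A\cap B_n=\bigcup_{k:\,m_k<n}(A_k\cap B_n)$, so taking $(m_k)$ strictly increasing already forces this to be a \emph{finite} union, hence a member of $\scr{B}_n^n$. The delicate point is $\mu'_n(A\cap B_n)\to 0$: as $n$ grows more summands appear, so the interference must be tamed by a diagonal choice. Having fixed $m_0<\cdots<m_{k-1}$, I would use that the finite sum $n\mapsto\sum_{j\le k}\mu'_n(A_j\cap B_n)$ tends to $0$ (each summand does, for fixed $j$) to pick $m_k>m_{k-1}$ so large that $\sum_{j\le k}\mu'_n(A_j\cap B_n)<1/(k+1)$ for all $n>m_k$. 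Then for each $n$, writing $k(n)=\max\{k:m_k<n\}$, we get $\mu'_n(A\cap B_n)\le\sum_{j\le k(n)}\mu'_n(A_j\cap B_n)<1/(k(n)+1)$, and since $k(n)\to\infty$ this gives $\mu'_n(A\cap B_n)\to 0$. By the characterisation of the second paragraph, $A\in\scr{B}$ with $\mu(A)=0$, and as $V\subseteq A$ this $A$ is the required clopen set.
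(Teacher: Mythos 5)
Your proposal is correct and follows essentially the same route as the paper: fix a null witness $A_k\in v_k$ for each point, diagonally choose increasing cut-offs so that the partial sums $\sum_{j\le k}\mu'_n(A_j\cap B_n)$ stay below $1/(k+1)$ on the corresponding tail, and take the union of the truncated witnesses, using $v_k\notin\widehat{\bigcup}_i B_i$ to keep each $v_k$ inside. Your explicit verification that a set meeting each $B_n$ in $\scr{B}_n^n$ with $\mu'_n(A\cap B_n)\to 0$ lies in $\scr{B}$ (via its complement belonging to $\cF$) is a detail the paper leaves implicit, but the argument is the same.
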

\noindent
\begin{proof} Let $V= \{v_i\colon i\in\omega\}$ and for each $i$ let $A_i\in \scr{B}$ be such $\mu(A_i) = 0$ and $A_i\in v_i$. Inductively choose a sequence of integers $n_0<n_1<n_2<\cdots$ such that for each $n\geq n_i$ we have $\sum_{m\leq i}\mu_n(A_m)<\frac{1}{i+1}$. Now set
$$
A  = \bigcup_{i} (  \bigcup_{m\leq i}A_m\cap \bigcup_{k\in [n_i,n_{i+1})}B_k ) = \bigcup_{i}(A_i\setminus \bigcup_{k<n_i} B_k).
$$
If $n\geq n_i$ then 
$$
\mu_n(A) = \mu_n(A\cap B_n) = \mu_n(\bigcup_{m\leq i}A_m)\leq \sum_{m\leq i}\mu_n(A_m)< \frac{1}{i+1}.
$$
Plainly $A\in\scr{B}$ and $\mu(A) = 0$. Moreover, for each $i$ we have $A_i\setminus A = A_i\cap \bigcup_{k < n_i}B_{k}$. Since $v_i\not\in \widehat{\bigcup}_iB_i$ we must have $v_i\in A$.
\end{proof}
\begin{lem}\label{newlem} Let $V$ be a countable subset of $K$ disjoint from $\widehat{\bigcup}_{i} B_i\cup R$. Then there exists a closed extremely disconnected set $E\subseteq K$ such that $V\subseteq E$.
\end{lem}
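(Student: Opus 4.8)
The plan is to capture $V$ inside a copy of $\beta\om$, exploiting the fact that a compact zero‑dimensional space is extremally disconnected as soon as its algebra of clopen sets is complete --- exactly how $R=\supp\mu$ is seen to be extremally disconnected in Claim \ref{D}, where $\mathrm{Clopen}(R)\cong\gM$. First I would note that the two hypotheses on $V$ feed the two earlier ingredients. Disjointness from $R$ gives, for each $v\in V$, some $F\in\cF$ with $F\notin v$, hence a clopen set $\om\setminus F\in v$ with $\mu(\om\setminus F)=1-\mu(F)=0$ (as $F\in\cF$ forces $\mu(F)=\lim_n\mu'_n(F\cap B_n)=1$). Disjointness from $\widehat{\bigcup}_i B_i$ then lets me apply Lemma \ref{p4} to these null sets and obtain a single clopen $A\supseteq V$ with $\mu(A)=0$, so $\mu'_n(A\cap B_n)\to 0$; it suffices to build $E$ inside $\widehat A$.

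Next I would fix the mechanism that produces extremally disconnected subsets. Choose pairwise disjoint clopen sets $U_i\subseteq A\cap B_{n_i}$ in distinct blocks $n_i$ with $\mu'_{n_i}(U_i)\to 0$. Then for every $S\subseteq\om$ the set $\bigcup_{i\in S}U_i$ has block traces in $\scr B^n_n$ and block measures tending to $0$, so its complement lies in $\cF$ and $\bigcup_{i\in S}U_i\in\scr B$. Consequently, for \emph{any} choice of points $d_i\in\widehat{U_i}$ these unions separate $\{d_i:i\in S\}$ from the rest, the assignment $i\mapsto d_i$ extends to a homeomorphism of $\beta\om$ onto $\overline{\{d_i\}}$, and $\overline{\{d_i\}}\cong\beta\om$ is extremally disconnected. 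The lemma is thereby reduced to choosing the $d_i$ so that $V\subseteq\overline{\{d_i\}}$, i.e.\ so that each $v_j$ is a limit $\lim_{i\to q_j}d_i$ along some ultrafilter $q_j$ on $\om$; equivalently, for every $C\in v_j$ there should be infinitely many $i$ with $C\in d_i$.

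To arrange this I would disintegrate each $v_j$ across the blocks. Since $\bigcup_n\widehat{B_n}\supseteq\om$ is dense in $K$, the canonical continuous map $\beta\big(\bigsqcup_n\widehat{B_n}\big)\to K$ is onto, so every off‑block point --- in particular each $v_j$ --- can be written as $\lim_{n\to p_j}d^{j}_{n}$ for block‑ultrafilters $d^{j}_{n}\in\widehat{B_n}$; using the null set $\om\setminus F_j\in v_j$ one may take $d^{j}_{n}$ concentrated on $(\om\setminus F_j)\cap B_n$, so the enclosing $U$'s automatically have block measure tending to $0$. Finally I would merge the countably many disintegrations into one strongly discrete family by distributing the block indices $\om=\bigsqcup_j N_j$ with $N_j\in p_j$ and setting $d_i:=d^{j}_{n_i}$ for $i$ in the part of $N_j$, after shrinking each $U_i$ to measure $<1/n_i$.

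The hard part is precisely this last merge: producing a \emph{single} copy of $\beta\om$ through \emph{all} of $V$ at once. Splitting the blocks succeeds only when the $p_j$ can be chosen pairwise separated, and the genuine obstruction is that two distinct points of $V$ may induce the \emph{same} block‑support ultrafilter (differing only in the within‑block direction); then no partition of the blocks separates them, and a block‑transversal $\beta\om$ provably cannot contain both. Overcoming this forces putting several --- but, to keep each $\bigcup_{i\in S}U_i$ in $\scr B$, only finitely many per block --- points in each block together with a measure‑guided refinement of the disintegration resolving the within‑block difference of the $v_j$. An alternative route, conceptually matching Claim \ref{D}, is to realise $E$ not as $\beta\om$ but as $\supp\rho$ for a Radon measure $\rho$ on $\widehat A$ whose reduced algebra $\scr B\restriction A/\mathrm{Null}(\rho)$ is complete and whose support contains $V$; the difficulty then migrates to keeping that reduced algebra complete while forcing the prescribed $v_j$ into the support.
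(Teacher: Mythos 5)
Your first reduction is sound and matches the paper: disjointness from $R$ gives each $v\in V$ a $\mu$-null clopen neighbourhood, and Lemma \ref{p4} produces a single clopen $A\supseteq V$ with $\mu(A)=0$; your observation that the closure of a block-transversal $(d_i)$ with $d_i\in \widehat{U_i}$, $U_i\subseteq A\cap B_{n_i}$, is a copy of $\beta\om$ is also correct (the separating unions lie in $\scr{B}$ because their complements are in $\cF$). But the argument breaks one step earlier than the ``merge'' you flag, namely at the disintegration $v=\lim_{n\to p}d_n$ with $d_n\in\widehat{B_n}$. Surjectivity of the canonical map $\beta\bigl(\bigsqcup_n\widehat{B_n}\bigr)\to K$ only yields a preimage point of the Stone--\v{C}ech compactification, not a selection limit. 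Concretely: since $\mu(A)=0$, \emph{every} $C\subseteq A$ whose traces $C\cap B_n$ lie in $\scr{B}_n^n$ belongs to $\scr{B}$ (its complement is in $\cF$), so $\scr{B}\restriction A$ is the \emph{full} product $\prod_n\bigl(\scr{B}_n^n\restriction (A\cap B_n)\bigr)$ and $\widehat{A}$ is its Stone space. Its fiber over a free ultrafilter $p$ on $\om$ is the Stone space of the Boolean ultraproduct of the (restricted random) block algebras --- an atomless, $\aleph_1$-saturated algebra of size $\con$. Under $\CH$ this fiber carries $2^{\omega_1}=2^{\con}$ ultrafilters, while the $p$-limits of selections $(d_n)_n$ number at most $\con$; so consistently a \emph{single} point $v\in V$ (lying outside $\widehat{\bigcup}_i B_i\cup R$, as required) is not a block-selection limit at all, and no transversal copy of $\beta\om$ --- nor your finitely-many-per-block variant --- can contain it. Since the lemma is a theorem of $\ZFC$, this is fatal to the plan of capturing $V$ inside one copy of $\beta\om$.

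This is exactly why the paper's $E$ is not a copy of $\beta\om$ but a union $O\cup P$. The paper stratifies $K$ into a hierarchy of ``bricks'' $\mathcal{B}_n$ with canonical measures $\mu^D$ and null ideals $\mathcal{I}_n$; a clopen null at \emph{every} level is homeomorphic to $\beta\om$ by a descending-ordinal-rank argument on the indices $\xi(D)$ (Claim \ref{w4}), while a clopen $A\in\mathcal{I}_n\setminus\mathcal{I}_{n+1}$ has $A/\mathcal{I}_{n+1}$ isomorphic to the random algebra (Claim \ref{w2}), whose Stone space is extremally disconnected by completeness. The points of $V$ sitting over positive-measure directions --- precisely the non-selection points above --- are absorbed into the closed sets $P_i=\mathrm{Stone}(A_i/\mathcal{I}_{n_i+1})$, whose closure $P$ is again a random-algebra Stone space, and only the residue null at all levels goes into a clopen $\beta\om$-copy $O$ via an $\mathcal{I}_n$-refinement of Lemma \ref{p4} (Claim \ref{w1}). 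Your closing ``alternative route'' (realise $E$ as the support of a measure with complete reduced algebra) gestures at the right object, but it is left entirely unexecuted, and the obstruction you call ``the hard part'' is not a bookkeeping matter of distributing block indices: without the brick-rank machinery and the random-algebra quotients the proposal does not yield a proof.
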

\noindent
\begin{proof} 
	\noindent
For each infinite subset $D$ of $\omega$ fix the enumeration $\{x_0^D<x^D_1<\ldots\}$.  Let $\alpha<\omega_1$ and let $D\sub K^{(\alpha)}$ be a clopen homeomorphic to $K^{(\xi)}$ for some $ 1\leq \xi\leq \alpha$. For $n\in\om$ define  \[B_n(D) = \{x_i^D\colon i\in B_n\}. \]
If $D$ is homeomorphic to $K^{(1)}$, then $B_n(D) = \{x_n^D\}$ and if $k\in\om$, then let $B_n(\{k\}) = \{k\}$.
Denote $\mathcal{B}(D) = \{B_0(D), B_1(D), \dots\}$. Loosely speaking
if $D$ is a ``brick'' used in the recursive construction of $K$ at some level, then $\mathcal{B}(D)$ denote the bricks used \emph{directly} for the construction of $D$. Now, let $\mathcal{B}_0 = \{K\}$, and define inductively \[\mathcal{B}_{n+1} =
\bigcup\{ \mathcal{B}(D)\colon
D\in \mathcal{B}_n\}.\] So,
$\mathcal{B}_1= \{B_0, B_1, \dots\}$, $\mathcal{B}_2$ is the family of all ``bricks'' used directly for the construction of $B_0$, $B_1$, \dots \ and so on. Each $\mathcal{B}_n$ is a partition of $\om$ into clopen subsets of $K$. For each $D\in
\bigcup_{n\in\omega}\mathcal{B}_{n}$ there is $\xi(D)\leq\alpha$ such that $D$ is a homeomorphic copy of $K^{(\xi(D))}$. 
Also, each $D\in \bigcup_{n\in\omega}\mathcal{B}_{n}$ carries a canonical measure $\mu^D$, analogous to the measure $\mu$ for
$K$ (if $D$ is a singleton of a natural number $k$, then put $\mu^D=\delta_k$). For each $n\in\omega$ let 
$$
\mathcal{I}_n = \{I\in \mathrm{Clopen}(K)\colon \mu^D(I\cap D)=0 \mbox{ for each }D\in \mathcal{B}_n\}.
$$
\noindent
Each member of $V$ contains an element of $\mathcal{I}_0 = \mathrm{Null}(\mu)$. By Lemma \ref{p4} we can find a clopen $A_0\in \mathcal{I}_0$ such that $V\subseteq A_0$.
\begin{claim}\label{w4} If $A\in \mathcal{I}_n$ for each $n$ then $A$ is homeomorphic to $\beta\omega$.
\end{claim}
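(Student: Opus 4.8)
Write $A_0=A\cap\om$, so that $A=\widehat{A_0}$ and $\mathrm{Clopen}(A)$ is the relative algebra $\scr{B}\har A_0:=\{B\cap A_0\colon B\in\scr{B}\}$. Since $\widehat{A_0}$ is homeomorphic to $\beta\om$ exactly when this relative algebra is the full power set $\cP(A_0)$ of the (infinite) set $A_0$, so that $\widehat{A_0}=\beta A_0$, the plan is to prove that every $X\sub A_0$ is the trace on $A_0$ of some member of $\scr{B}$.

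I would argue by induction along the well-founded tree of bricks $\bigcup_n\mathcal{B}_n$, which is well-founded because the complexities $\xi(D)$ strictly decrease as one passes to sub-bricks. For each brick $D$ let $\scr{B}_D$ be the algebra with $\mathrm{Stone}(\scr{B}_D)=D$, and let $\cF_D$ be the analogue of the ``asymptotically full'' family $\cF$ for the canonical construction of $D$. The statement to be proved by induction is that the trace $\scr{B}_D\har(A_0\cap D)$ equals $\cP(A_0\cap D)$. When $D=\{k\}$ is a singleton brick this is trivial, which is the base case.

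For the inductive step fix a brick $D\cong K^{(\xi)}$ with top sub-bricks $(D_m)_m=\mathcal{B}(D)$ and canonical measure $\mu^D$. First one checks that the hypothesis $A\in\mathcal{I}_n$ for all $n$ localises: the sub-bricks of $D$ at its own level $m$ lie among the members of $\mathcal{B}_{m+1}$, so $\mu^{D'}(A\cap D')=0$ for every sub-brick $D'$ of $D$ at every level; in particular, through the isomorphisms $\varphi_m$ one has $\mu'_m(A_0\cap D_m)=\mu^{D_m}(A_0\cap D_m)=0$ for each $m$. Now fix $X\sub A_0\cap D$ and set $X_m=X\cap D_m$. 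By the inductive hypothesis applied to each $D_m$ we get $X_m\in\scr{B}_{D_m}$, and from $X_m\sub A_0\cap D_m$ we get $\mu'_m(X_m)=0$, whence
\[
\mu'_m\big((D\sm X)\cap D_m\big)=\mu'_m(D_m)-\mu'_m(X_m)=1
\]
for every $m$. As moreover $(D\sm X)\cap D_m=D_m\sm X_m\in\scr{B}_{D_m}$, the complement $D\sm X$ meets the defining conditions of $\cF_D$, so $D\sm X\in\cF_D\sub\scr{B}_D$ and therefore $X\in\scr{B}_D$. Taking $D=K$ gives $X\in\scr{B}$ for every $X\sub A_0$, so $\scr{B}\har A_0=\cP(A_0)$ and $A\cong\beta\om$.

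The delicate point will be the bookkeeping in the inductive step: one must match the global level $n$ with the correct local level inside each brick and verify, through the metric isomorphisms $\varphi_m$, that the brick measure $\mu^{D_m}$ coincides with $\mu'_m$, so that ``$A$ is null at every level'' genuinely forces $\mu'_m(X_m)=0$. Once these identifications are in place, the complementation trick through $\cF_D$ is immediate and delivers the whole claim.
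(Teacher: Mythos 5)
Your strategy---a positive induction along the well-founded tree of bricks, proving that every $X\sub A_0\cap D$ already belongs to $\scr{B}_D$ via the complementation trick $D\sm X\in\cF_D$---is essentially the paper's own argument run in reverse: the paper supposes some $C\sub A$ is not clopen and descends through bricks $D_1\supseteq D_2\supseteq\cdots$ with $\xi(D_1)>\xi(D_2)>\cdots$, using at each step exactly your observation that a null set whose trace on every sub-brick is clopen is itself clopen (its complement lands in $\cF_D$), and the strictly decreasing ordinals give the contradiction. For bricks with $\xi(D)\geq 2$ your step is correct, modulo the small repair that your inductive hypothesis as stated gives $X_m$ only as a \emph{trace} of a member of $\scr{B}_{D_m}$; to get actual membership intersect a representative with the clopen set $A\cap D_m\in\scr{B}_{D_m}$, or simply strengthen the inductive statement to ``every subset of $A_0\cap D$ lies in $\scr{B}_D$''.

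The genuine gap is your base case. You take the singleton bricks as the base and push the $\cF_D$ mechanism through the $K^{(1)}$-bricks, but for $D\cong K^{(1)}$ the sub-bricks are singletons $\{x^D_m\}$ carrying Dirac measures, so your requirement $\mu'_m(X_m)=0$ says precisely $x^D_m\notin A$ for all $m$. This is not available: every nonempty clopen set meets the dense set $\om$, so in all intended applications (e.g.\ $A_0\supseteq V$ in Lemma \ref{newlem}) $A$ does contain integers. Worse, if one insists on reading the Dirac conditions into the hypothesis ``$A\in\mathcal{I}_n$ for all $n$'', then---since every $k\in\om$ becomes a singleton brick at some finite level, the descent of the ordinals $\xi$ being finite---the hypothesis would force $A\cap\om=\emptyset$, i.e.\ $A=\emptyset$, and the claim degenerates; so the intended hypothesis only concerns the non-trivial bricks, and your localisation ``$\mu'_m(X_m)=0$ at every level'' fails at the singleton level. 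What is needed at $\xi=1$, and what the paper uses (``if $\xi(D_1)=1$ then every subset of $D_1\cap A$ would be clopen''), is a different mechanism: the algebra of a $K^{(1)}$-brick contains a copy of the asymptotic density ideal $\ZZ$, which is \emph{hereditary}; since $\mu^D(A\cap D)=0$ means $A_0\cap D$ has density zero, every $X\sub A_0\cap D$ is itself density zero and hence already clopen, with no appeal to $\cF_D$. Relocating the base of your induction to the bricks with $\xi(D)=1$ and inserting this heredity argument makes your proof correct and brings it into line with the paper's.
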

\noindent 
\begin{proof} Let $C\subseteq A$ and suppose that $C$ is not clopen. Since $A \in \mathcal{I}_0$ we know that for some $D_1\in \mathcal{B}_{1}$ we have $C\cap D_1$ is not clopen. If $\xi(D_1)= 1$ then every subset of $D_1\cap A$ would be clopen, so $\xi(D_1)>1$.
	Since $A \in\mathcal{I}_1$ we can find some $D_2\in \mathcal{B}_{2}$ such that $D_2\cap C$ is not clopen and $\xi(D_1)>\xi(D_2)$ (and $D_1\supseteq D_2$). 
As before we see that
$\xi(D_2)>1$. In this way we construct a sequence of non-zero ordinals $\xi(D_1)>\xi(D_2)>\xi(D_3)>\ldots$ which is a contradiction. 
\end{proof}
\bigskip

\noindent
If $A_0$ is homeomorphic to $\beta\omega$ then we may take $P = \emptyset$ and $O = A_0$ and we are done. So by Claim \ref{w4}, we may assume that we can find
$$
n_0 = \max\{n\colon A_0\in \mathcal{I}_n\}.
$$
If $G\subseteq K$ is clopen and $\mathcal{J}\subseteq \mathrm{Clopen}(K)$ is an ideal such that $G\not\in \mathcal{J}$, then by $G/\mathcal{J}$ we shall mean the Boolean algebra $\mathrm{Clopen}(G)/\mathcal{J}$.
\begin{claim}\label{w2} Let $A\in \mathcal{I}_{n}\setminus \mathcal{I}_{n+1}$, then $A/\mathcal{I}_{n+1}$ is isomorphic to the random algebra.
\end{claim}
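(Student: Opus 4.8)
The plan is to understand precisely what the quotient $A/\mathcal{I}_{n+1}$ looks like combinatorially and then identify it with the random (measure) algebra $\gM$. Recall that $A \in \mathcal{I}_n$ means that for every brick $D \in \mathcal{B}_n$ the canonical measure $\mu^D$ vanishes on $A \cap D$, whereas $A \notin \mathcal{I}_{n+1}$ means that for some brick $D \in \mathcal{B}_{n+1}$ (a sub-brick sitting inside some $D' \in \mathcal{B}_n$) the measure $\mu^D$ does \emph{not} vanish on $A \cap D$. The guiding intuition is that passing from $\mathcal{I}_n$ to $\mathcal{I}_{n+1}$ ``descends one level'' in the recursive construction, and on each brick $D$ of level $n+1$ the canonical measure $\mu^D$ is metrically isomorphic to $(\gM,\lambda)$ by the last clause of Theorem \ref{Main}. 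So the heart of the matter is to show that modding $\mathrm{Clopen}(A)$ by $\mathcal{I}_{n+1}$ exactly recovers the measure-algebra structure carried by the level-$(n+1)$ bricks.

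\textbf{First} I would set up the quotient explicitly. For each brick $D \in \mathcal{B}_{n+1}$ meeting $A$ nontrivially (i.e.\ with $\mu^D(A \cap D) > 0$), the canonical construction gives $\mathrm{Clopen}(D)/\mathrm{Null}(\mu^D) \cong \gM$ via the metric isomorphism of Theorem \ref{Main}. The ideal $\mathcal{I}_{n+1}$ is, by definition, the collection of clopen sets that are $\mu^D$-null on every level-$(n+1)$ brick simultaneously; so quotienting $\mathrm{Clopen}(A)$ by $\mathcal{I}_{n+1}$ should produce, loosely, a ``measured sum'' of copies of $\gM$ indexed by those bricks $D$ with $\mu^D(A\cap D)>0$. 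I would then argue that because $A \in \mathcal{I}_n$, each individual brick $D$ contributes its \emph{full} local random algebra rather than a proper piece, and the $\sigma$-additive gluing of countably many copies of the Lebesgue/random measure algebra (via the $\widehat{\bigcup}$ operation over the partition $\mathcal{B}_{n+1}$ restricted to $A$) is again metrically isomorphic to $(\gM,\lambda)$. The key external input is the classical uniqueness theorem for separable atomless measure algebras: any countable ``measured disjoint union'' of copies of the Lebesgue algebra is isomorphic to the Lebesgue algebra itself.

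\textbf{The main obstacle} will be verifying that the quotient is genuinely \emph{atomless} and \emph{separable}, i.e.\ that $A/\mathcal{I}_{n+1}$ carries a strictly positive atomless $\sigma$-additive measure making it a random algebra, rather than something merely Boolean-isomorphic to it without the measure. The subtlety is that $\mathcal{I}_{n+1}$ is defined purely in terms of the measures $\mu^D$ on level-$(n+1)$ bricks, so I must check that a clopen subset of $A$ is $\mathcal{I}_{n+1}$-null \emph{if and only if} it is null for the aggregate measure $\sum_D \mu^D\!\restriction\!(A\cap D)$; this requires using $A\in\mathcal{I}_n$ to rule out the pathological possibility that $A$ meets a brick in a set that is ``invisible'' at level $n$ but resurfaces at level $n+1$ with positive measure in an uncontrolled way. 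Once the ideal $\mathcal{I}_{n+1}\cap\mathrm{Clopen}(A)$ is identified with the null ideal of this aggregate Radon measure, separability follows from the metric isomorphism of each factor to the separable algebra $\gM$ (countably many separable factors remain separable), and atomlessness follows because $\lambda$ is atomless on each factor and no atom can be created by the countable gluing. I would conclude by invoking the Maharam-type classification: a separable atomless probability algebra is, up to measure isomorphism, the Lebesgue/random algebra, whence $A/\mathcal{I}_{n+1}\cong\gM$ as desired.
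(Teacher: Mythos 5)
Your proposal is correct and follows essentially the same route as the paper: decompose $A/\mathcal{I}_{n+1}$ over the bricks $D\in\mathcal{B}_{n+1}$, observe that each factor $(A\cap D)/\mathcal{I}_{n+1}$ is either trivial or a copy of the random algebra, and conclude via the fact that a countable product (or measured sum) of random algebras is again the random algebra — your appeal to the Maharam/uniqueness theorem for separable atomless measure algebras is just a more explicit citation of that last step. One small remark: the reason each nontrivial factor is a full random algebra is simply that every nonzero principal ideal of the measure algebra is isomorphic to the measure algebra, not the hypothesis $A\in\mathcal{I}_n$ (which is rather what guarantees the product decomposition behaves well).
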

\noindent
\begin{proof} For each $D\in \mathcal{B}_{n+1}$ the algebra $D/\{I\cap A\cap D\colon I\in \mathcal{I}_{n+1}\}$ is either isomorphic to the random algebra or trivial. 
Since $A\notin \mathcal{I}_{n+1}$ at least for one $D\in \mathcal{B}_{n+1}$ the above is isomorphic to the random algebra. Thus,
$A/\mathcal{I}_{n+1}$ is just the countable (perhaps, finite) product of random algebras, which is again isomorphic to the random algebra.
\end{proof}
\bigskip

\noindent
By Claim \ref{w2} we know that $A_0/ \mathcal{I}_{n_0+1}$ is the random algebra. Define $P_0 = \mathrm{Stone}(A_0/\mathcal{I}_{n_0+1})$.
\begin{claim}\label{w1} Let $A\in \mathrm{Null}(\mu)$ and let $n\in \om$. If $U\subseteq A$ is a countable set disjoint from  $\widehat{\bigcup}_i B_i$ such that each member of $U$ contains a member of $\mathcal{I}_n$, then there exists an $O\in \mathcal{I}_n$ such that $U\subseteq O$.
\end{claim}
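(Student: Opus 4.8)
The plan is to treat this as the level-$n$ analogue of Lemma~\ref{p4}: for each $u\in U$ I will choose a witness $I_u\in u\cap\mathcal{I}_n$ and glue the $I_u$ into a single clopen set by truncating along the bricks $B_j$, exactly as in the proof of Lemma~\ref{p4}. The case $n=0$ is literally Lemma~\ref{p4} (since $\mathcal{I}_0=\mathrm{Null}(\mu)$), so I would assume $n\geq 1$ throughout.

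First I would record three structural facts about the brick ideals, each obtained by a short induction on the recursive construction of $K$. (a) For $n\geq 1$ the partition $\mathcal{B}_n$ refines $\mathcal{B}_1=\{B_j\colon j\in\om\}$, so every $D\in\mathcal{B}_n$ is contained in a unique brick $B_j$. (b) Since each canonical measure $\mu^{D'}$ for $D'\in\mathcal{B}_m$ is the limit of the $\mu^{D}$ over the sub-bricks $D\in\mathcal{B}(D')$, the ideals are nested, $\mathcal{I}_n\subseteq\mathcal{I}_{n-1}\subseteq\cdots\subseteq\mathcal{I}_1\subseteq\mathcal{I}_0=\mathrm{Null}(\mu)$. (c) On a single brick the canonical measure agrees with the defining measure, $\mu^{B_j}=\mu'_j$ on $\scr{B}_j^j$, and $\scr{B}\restriction B_j=\scr{B}_j^j$. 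The upshot of (b) and (c) is that a witness $I\in\mathcal{I}_n$ is not merely $\mu$-null but \emph{exactly} null on each brick, i.e.\ $\mu'_j(I\cap B_j)=0$ for every $j$. This exactness is precisely what lets me avoid the measure-adapted choice of truncation points that Lemma~\ref{p4} required.

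Now the construction. Enumerate $U=\{u_i\colon i\in\om\}$ (the finite case being trivial, as a finite union of $\mathcal{I}_n$-sets is again a clopen member of $\mathcal{I}_n$). For each $i$ pick $I_i\in u_i\cap\mathcal{I}_n$, and replacing $I_i$ by $I_i\cap A$ (still in $u_i$ and, as $\mathcal{I}_n$ is an ideal, still in $\mathcal{I}_n$) I may assume $I_i\subseteq A$. With $B'_i=\bigcup_{k\leq i}B_k$, set
\[ O=\widehat{\bigcup}_i\,(I_i\setminus B'_i). \]
Three things then need checking. First, $U\subseteq O$: since $u_i$ is disjoint from $\widehat{\bigcup}_kB_k$ it contains no brick, so $\om\setminus B'_i\in u_i$, whence $I_i\setminus B'_i\in u_i$ and $O\in u_i$. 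Second, $O\in\scr{B}$: for fixed $j$ the truncation gives $O\cap B_j=\bigcup_{i<j}(I_i\cap B_j)$, a \emph{finite} union of relative-clopen sets, so $O\cap B_j\in\scr{B}_j^j$, and by the exactness from (b)--(c) one has $\mu'_j(O\cap B_j)=0$; hence $(\om\setminus O)\cap B_j\in\scr{B}_j^j$ with $\mu'_j((\om\setminus O)\cap B_j)=1$, so $\om\setminus O\in\cF\subseteq\scr{B}$ and $O\in\scr{B}$. Third, $O\in\mathcal{I}_n$: given $D\in\mathcal{B}_n$, let $B_j\supseteq D$ be its brick by (a); the same computation yields $O\cap D=\bigcup_{i<j}(I_i\cap D)$, so $\mu^D(O\cap D)\leq\sum_{i<j}\mu^D(I_i\cap D)=0$ because each $I_i\in\mathcal{I}_n$.

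The only genuinely delicate point is the second one, showing that the infinite union $O$ is actually clopen; this is exactly the role of truncating by $B'_i$. Removing the first $i$ bricks from $I_i$ forces $O\cap B_j$ to be a finite union for every $j$, which simultaneously keeps each $u_i$ inside $O$ (because the $u_i$ avoid all bricks) and lets me exhibit $\om\setminus O$ as a member of $\cF$. Everything else is bookkeeping with the ideal properties of $\mathcal{I}_n$ and the structural facts recorded in the first step.
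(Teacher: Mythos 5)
Your proof is correct and takes essentially the same route as the paper: the paper likewise picks witnesses $O_i\in\mathcal{I}_n\cap u_i\cap\cP(A)$ and glues them via the identical truncated union $O=\bigcup_i\bigl(O_i\setminus\bigcup_{k\leq i}B_k\bigr)$, verifying $U\subseteq O$ (since no $u_i$ meets $\widehat{\bigcup}_i B_i$) and $O\in\mathcal{I}_n$ (via $D\subseteq B_j$ and $\mu^D(O\cap D)\leq\sum_{i<j}\mu^D(O_i\cap D)=0$) exactly as you do. The only divergence is in the clopenness step, and it is cosmetic: the paper deduces $O\in\scr{B}$ directly from $A\in\mathrm{Null}(\mu)$ (so $\mu'_j(O\cap B_j)\leq\mu'_j(A\cap B_j)\to 0$ and $\omega\setminus O\in\cF$), which handles every $n\in\omega$ uniformly, whereas your exact-nullity argument via the nesting $\mathcal{I}_n\subseteq\mathcal{I}_1$ is also valid (though for bricks homeomorphic to $K^{(1)}$ the nesting needs density of the integers rather than ``the canonical measure is the limit of the sub-brick measures'') but forces the separate appeal to Lemma~\ref{p4} for $n=0$, a case split the paper avoids.
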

\noindent 
\begin{proof} Enumerate $U = \{u_0,u_1,...\}$. For each $i$ let $O_i \in \mathcal{I}_n\cap u_i\cap \cP(A)$. Let \[O = \bigcup_{i} \left(  O_i\setminus \bigcup_{k\leq i} B_k \right).\] Since $A\in\mathrm{Null}(\mu)$ we know that $O$ is clopen, and
		clearly $u_i\in O$ (since no $u_i$ is a member of $\widehat{\bigcup}_i B_i$). Now for any $D\in \mathcal{B}_n$, since $D\subseteq B_i$ for some $i$, we have $O\cap D\subseteq O\cap B_i \subseteq \bigcup_{k<i} O_k$. But then 
$$
\mu^D(O\cap D)\leq  \sum_{k<i} \mu^D(O_k) = 0,
$$
and $O\in \mathcal{I}_n$.
\end{proof}
\bigskip

\noindent
By Claim \ref{w1}, we can find a clopen $A_1\subseteq A_0$ such that $V\setminus P_0\subseteq A_1$. If $A_1$ is homeomorphic to $\beta\omega$ then we may take $P = P_0$ and $O = A_1$, and we are done. So we may assume, by Claim \ref{w4}, that we can find 
$$
n_1 = \max\{n\colon A_1\in \mathcal{I}_n\}.
$$
Let $P_1 = \mathrm{Stone}(A_1/\mathcal{I}_{n_1+1})$ and notice that, as before, it is homeomorphic to the Stone space of the random algebra.\\\\
Continuing in this way we may, without loss of generality, assume that there exists a sequence of clopens $(A_i)_{i\in \omega}$ and a sequence of closed sets $(P_i)_{i\in\omega}$ such that $V\subseteq A_0$ and for each $i$ we have:
\begin{itemize}
\item $A_i\supseteq A_{i+1}$;
\item $A_i\in \mathcal{I}_i$;
\item $P_i$ is homeomorphic to the Stone space of the random algebra;
\item $P_i\subseteq A_i\setminus A_{i+1}$;
\item $(A_i\setminus P_i)\cap V\subseteq A_{i+1}$, in particular $(A_i\setminus (A_{i+1}\cup P_i))\cap V = \emptyset$.
\end{itemize}
\noindent
Let $P = \mathrm{Closure}(\widehat{\bigcup}_i P_i)$.
\begin{clm} $P$ is extremely disconnected.
\end{clm}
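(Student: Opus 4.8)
The plan is to verify extreme disconnectedness of $P$ directly from the definition: that the closure in $P$ of every open set is open. First I would record two structural facts about $P=\mathrm{Closure}(\widehat{\bigcup}_i P_i)$. Each $P_i$ is clopen in $P$: since $A_i\setminus A_{i+1}$ is clopen in $K$ and the $P_j$ lie in the pairwise disjoint sets $A_j\setminus A_{j+1}$, the identity $\overline{S}\cap G=\overline{S\cap G}\cap G$ for open $G$ (applied with $G=A_i\setminus A_{i+1}$) gives $(A_i\setminus A_{i+1})\cap P=P_i$. Consequently the remainder $Q=P\setminus\bigcup_i P_i$ is contained in $\widehat{\bigcap}_i A_i$, because $P\subseteq A_0$ while no point of $Q$ lies in any annulus $A_i\setminus A_{i+1}$. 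Secondly, each $P_i$ is itself extremely disconnected, being the Stone space of the random (measure) algebra, which is complete.

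Next I would reduce the statement to a separation problem. Let $U\subseteq P$ be open and put $W_i=\overline{U}\cap P_i$. Since $P_i$ is open, $W_i=\overline{U\cap P_i}$, and since $P_i$ is clopen in $P$ and extremely disconnected this is clopen in $P$ and contained in $P_i$. Using that $\bigcup_i P_i$ is dense in $P$ one checks $\overline{U}=\overline{\bigcup_i W_i}$ (the inclusion $\subseteq$ comes from density of $U\cap\bigcup_iP_i$ in $U$), so it suffices to prove that for an \emph{arbitrary} choice of clopen sets $W_i\subseteq P_i$ the closure $\overline{\bigcup_i W_i}$ is open. Writing $W_i'=P_i\setminus W_i$, the two closures $\overline{\bigcup_i W_i}$ and $\overline{\bigcup_i W_i'}$ together cover $P$, so it is enough to show they are disjoint; each is then the complement of the other and hence clopen. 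As $W_i$ and $W_i'$ are already separated inside each $P_i$, any point of the intersection must lie in $Q\subseteq\widehat{\bigcap}_i A_i$.

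The heart of the argument is to separate the two vertical families by a single clopen subset of $K$. For each $i$ lift $W_i$ to a clopen set $C_i\subseteq A_i\setminus A_{i+1}$ of $K$ with $C_i\cap P_i=W_i$ and $C_i\cap W_i'=\emptyset$; this is possible because $W_i$ is clopen in the Stone space $P_i$. Crucially, since $C_i\subseteq A_i\in\mathcal{I}_{n_i}\subseteq\mathcal{I}_1$ (for $i\geq 1$) and $\mathcal{I}_1$ is an ideal, each $C_i$ satisfies $\mu^{B_k}(C_i\cap B_k)=0$ for every top-level brick $B_k$. I would then apply the staggering device already used in Lemma \ref{p4} and Claim \ref{w1}: choosing an increasing sequence $(m_i)_i$ that grows fast enough, the set $C=\bigcup_i\big(C_i\setminus B'_{m_i}\big)$ (union in $\cP(\omega)$) lies in $\scr{B}$, i.e.\ is clopen in $K$, by the same null-set computation $\mu^D(C\cap D)\leq\sum_k\mu^D(C_k\cap D)=0$ on each brick $D$. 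For all sufficiently large $i$ one then has $W_i\subseteq C$ and $W_i'\cap C=\emptyset$, so $C$ separates the tails of the two families.

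Finally I would conclude. On each $P_i$ the two closures meet $P_i$ in $W_i$ and $W_i'$, which are disjoint; and any common point in $Q$ would be approached through the tails ($i\to\infty$) of \emph{both} families, forcing it to lie simultaneously in $C\cap P$ and $(K\setminus C)\cap P$, which is impossible. Hence $\overline{\bigcup_i W_i}$ is clopen, and $P$ is extremely disconnected. The main obstacle is the third step, the production of the clopen separating set $C$: one must check both that $C$ genuinely belongs to $\scr{B}$ — this is exactly where the $\mu^{B_k}$-nullity of the $C_i$, inherited from $A_i\in\mathcal{I}_{n_i}$, is indispensable — and that subtracting the initial blocks $B'_{m_i}$ does not destroy the containments at limit points of $Q$, which holds because such points are reached only through the tails. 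Both are handled by the measure bookkeeping of Lemma \ref{p4} and Claim \ref{w1}.
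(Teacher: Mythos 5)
Your overall scaffolding (each $P_i$ is clopen in $P$ because $(A_i\setminus A_{i+1})\cap P=P_i$; the residual set $Q$ lies in $\widehat{\bigcap}_i A_i$; reducing extremal disconnectedness to separating $\bigcup_i W_i$ from $\bigcup_i W_i'$ for arbitrary clopen $W_i\subseteq P_i$) is correct and is essentially an unpacking of what the paper proves more compactly, namely that $\mathrm{Clopen}(P)$ is the full product $\prod_i A_i/\mathcal{I}_{n_i+1}$, a countable product of random algebras, hence the random algebra, hence complete. But the step you yourself identify as the heart of the argument has a genuine gap. You build the separating set as a \emph{staggered} union $C=\bigcup_i(C_i\setminus B'_{m_i})$ in the style of Lemma \ref{p4} and Claim \ref{w1}, and then assert that ``for all sufficiently large $i$ one has $W_i\subseteq C$.'' Since the $C_l$ live in pairwise disjoint annuli, $C\cap C_i=C_i\setminus B'_{m_i}$, so $W_i\subseteq C$ would force $W_i\cap \widehat{B'_{m_i}}=\emptyset$. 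There is no reason for this: $P_i=\mathrm{Stone}(A_i/\mathcal{I}_{n_i+1})$ can meet $\widehat{B_k}$ for small $k$ in an essential (non-negligible in $A_i/\mathcal{I}_{n_i+1}$) way, e.g.\ when $A_i\cap B_0\notin\mathcal{I}_{n_i+1}$, and then $W_i$ may be entirely contained in $\widehat{B'_{m}}$ for every $m\geq 0$, so no choice of $m_i$ helps. The staggering trick works in Lemma \ref{p4} and Claim \ref{w1} precisely because the points to be captured there are assumed to lie \emph{outside} $\widehat{\bigcup}_k B_k$, so deleting initial blocks is harmless; here the sets $W_i$ to be captured have no such property, and your closing remark that common points of $Q$ ``are reached only through the tails'' does not address the uncovered pieces $W_i\cap\widehat{B'_{m_i}}$, whose union can accumulate on $Q$.

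The fix is to drop the staggering entirely: this is exactly what the paper's one-line proof rests on. Because $C_i\subseteq A_i\in\mathcal{I}_i$ and the ideals $\mathcal{I}_i$ descend through the brick hierarchy, a modification of the ordinal-descent argument of Claim \ref{w4} shows that the \emph{plain} union $\bigcup_i C_i$ of clopen sets $C_i\subseteq A_i$ is already clopen in $K$ (on each brick $D\in\mathcal{B}_n$ only the finitely many $C_i$ with $i\leq n$ can fail to be $\mu^D$-null, and any failure of clopenness would produce an infinite strictly decreasing sequence of ordinals $\xi(D_1)>\xi(D_2)>\cdots$). With $C=\bigcup_i C_i$ clopen one gets $\bigcup_i W_i\subseteq C$ and $C\cap\bigcup_i W_i'=\emptyset$ outright, so $\overline{\bigcup_i W_i}\subseteq C$ and $\overline{\bigcup_i W_i'}\subseteq K\setminus C$ and your separation argument closes. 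Equivalently, once this unstaggered union is available, one sees directly that every sequence of clopens $W_i\subseteq P_i$ is induced by a single clopen of $K$, i.e.\ $\mathrm{Clopen}(P)\cong\prod_i A_i/\mathcal{I}_{n_i+1}$, which is the paper's route.
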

\noindent
\begin{proof}
\noindent 
Modifying the proof of Claim \ref{w4} it can be shown that if $C_i \sub A_i$ for each $i$, then $C = \bigcup_i C_i$ is a clopen subset of $K$. 
So, the clopen algebra of $\mathrm{Closure}(\widehat{\bigcup}_n P_n)$ is isomorphic to the product of $\prod_n A_n/\mathcal{I}_{n+1}$. Hence, it is isomorphic to the countable product of random algebras, and so it is isomorphic to the random algebra.
\end{proof}
\bigskip

\noindent
Now for each $v\in \widehat{\bigcap}_i A_i\setminus P$ we can find clopen set $O_v$, disjoint from $P$ and containing $v$. In particular each $O_v$ is disjoint from each $P_i$ and therefore $O_v\in \bigcap_{i} \mathcal{I}_i$. We can of course assume
that each $O_v$ is a subset of $A_0$. Just as in Claim \ref{w1}, we can find a clopen $O$ such that $(\widehat{\bigcap}_i A_i\setminus P)\cap V\subseteq O\subseteq A$ and $O\in \bigcap_{i} \mathcal{I}_i$. By Claim \ref{w4} we see that $O$ is homeomorphic to $\beta\omega$. Let $E = O\cup P$. Now if $v\in A_0\setminus E$ then for some $i$ we have that $v\in (A_i\setminus (A_{i+1}\cup P_i))\cap V = \emptyset$, and this is a contradiction. Thus $V\subseteq E$, which completes the proof of Lemma \ref{newlem}.
\end{proof}

\noindent
We are now ready to prove Theorem \ref{first}. 
\begin{proof}[Proof of Theorem \ref{first}] Let $A$ be the collection of ultrafilters used to define each of the $\nu^1_k$. There is a closed extremely disconnected space $R' (= E \cup R$, where $E$ is given by Lemma \ref{newlem})\ such that $A\subseteq R'$. Assume for a contradiction that $\nu^2$ is not absolutely continuous with respect to $\mu$. We construct two strictly increasing sequences of natural numbers $(k_i)_{i}$ and $(n_i)_{i}$, two sequences of clopen sets $(F_i)_{i}$ and $(Z_{i})_{i}$ and strictly positive reals $\alpha,\delta,\epsilon$ with 
\begin{equation}\label{p45}
\mbox{$\alpha+8\delta <2\epsilon$ and $\delta<\epsilon<\alpha$} 
\end{equation}
and such that the following holds for each $i$:
\begin{enumerate}[(a)]
	\item $Z_{i}\supseteq Z_{i+1}$;
        \item $F_i\subseteq Z_{i}\setminus Z_{{i+1}}$;
	\item $\mu_n(Z_{i})<\frac{1}{i+1}$, for each $n>n_i$;
	\item $\nu_k(Z_0) < \alpha$, for each $k$;	
	\item $\nu^0_{k_{i+1}}\left(Z_{i}\cap (B'_{n_{i+1}}\setminus B'_{n_i})\right) > \nu^0_{k_{i+1}}(Z_{i}) - \delta$;
        \item $\nu^1_{k_{i+1}}(R'\cap F_i) > \nu^1_{k_{i+1}}(Z_i) - \delta$;
	\item $\nu^2_{k}(R'\cap Z_{i}\setminus Z_{{i+1}}) > \nu^2_{k}(Z_{i}) - \delta$, for each $k\geq k_{i+1}$;
	\item $\nu_{k}(Z_{i}) > \epsilon - \delta$, for each $k\geq k_{i+1}$.	
\end{enumerate}
\noindent
Assuming the above for now, define for each $i$ the set
\[G_i = R' \cap (Z_{i}\setminus Z_{{i+1}}).\]
Let $G\in \scr{B}$ be such that $G\cap R' = \bigvee_i G_{2i}$. Here ``$\bigvee$'' denotes the supremum taken in the complete Boolean algebra $\mathrm{Clopen}(R')$. For each $i$ consider the set
\[E_i = Z_{i}\cap (B'_{n_{i+1}}\setminus B'_{n_i}). \]
Let
\[ H = \left(G\setminus \bigcup_i E_{2i+1}\right)\cup \bigcup_i E_{2i}.\]
Notice that $H\in \scr{B}$. 
Indeed, $\bigcup_i E_{2i}\cap B_n\in \scr{B}_n$ for each $i$ and $n$ and $\mu_n(E_{2i})$ converges to $0$ for every $i$, 
so $\bigcup_i E_{2i}\in \scr{B}$. The same argument works for $\bigcup_i E_{2i+1}$.\\\\
\noindent
Assume that $i$ is even and let $j=k_{i+1}$. Then 
\begin{itemize}
	\item $E_i \sub H$ and thus, by the definition of $E_i$ and by (e), $\nu^0_{j}(H)>\nu^0_{j}(Z_{i})-\delta$;
	\item $(F_{i}\cap R')\setminus \widehat{\bigcup}_i B_i \sub H$ so, by (f), $\nu^1_{j}(H)>\nu^1_{j}(Z_{i})-\delta$;
	\item $G_i \setminus \widehat{\bigcup}_i B_i \sub H$ so, by the definition of $G_i$ and by (g), $\nu^2_{j}(H)>\nu^2_{j}(Z_{i})- \delta$.
\end{itemize}
Therefore 
\[ \nu_j(H)>\nu^0_j(Z_{i})+\nu^1_j(Z_{i})+\nu^2_j(Z_{i}) - 3\delta = \nu_j(Z_{i})- 3\delta > \epsilon - 4\delta. \]
Similarly if $i$ is odd and $j = k_{i+1}$, then 
\[ \nu_j(Z_0\setminus H)> \epsilon - 4\delta \]
and, by (d), 
\[ \nu_j(H)<\nu_j(Z_0) - \nu_j(Z_0\setminus H) < \alpha - \epsilon + 4\delta. \]
Thus for every $i$ and $j$ we have
$$
\nu_{k_{2i+1}}(H) < \alpha -\epsilon + 4\delta <\epsilon - 4\delta <\nu_{k_{2j}}(H).
$$
This shows that the sequence $(\nu_k)_{k}$ does not converge and we have our contradiction.\\\\
Let us now construct the above sequences and reals. We may assume that $\nu^2(R') = 1$. Indeed, let $J\in \scr{B}$ be such that $\mu(J)=0$. Since $A\subseteq R'$ and for each $k$ we have $\nu^2_k\ll \mu$, we must have $\nu_k\restriction J \in L_0$. According to Proposition \ref{L0} (normalising if necessary) the limit of $\nu_k\restriction J$ is almost absolutely continuous with respect
to $\mu$, so it has to belong to $L_0$. Thus, $\nu^2(J)=0$. Since $\nu$ is non-atomic it follows that $\nu^2(R')=1$.
\\\\
By Lemma \ref{findseq} we can find a sequence $D_0\supseteq D_1\supseteq \ldots $ from $\scr{B}$ such that $\mu(D_i)<\frac{1}{i+1}$ and $\nu^2(\widehat{\bigcap}_{i}D_i)>0$. Let $A_i = D_i\setminus B'_i$. Since for each $i$ we have $A_i\subseteq D_i$ and $\nu^2(A_i) = \nu^2(D_i)$ we still have $\mu(A_i)<\frac{1}{i+1}$ and $\nu^2(\widehat{\bigcap}_{i}A_i) = \nu^2(\widehat{\bigcap}_{i}D_i)> 0$. Let $\epsilon = \nu(\widehat{\bigcap}_{i}A_i)$. Now choose any $\delta$ and $\alpha$ satisfying (\ref{p45}).\\\\
Let $N$ be such that $\nu^0(B'_N)>\nu^0(K)-\delta$. By truncating the sequence $(A_i)_{i}$ we can assume that $A_0\cap B'_N = 0$ and $\nu(A_0)<\alpha$. By truncating the sequence $(\nu_k)_{k}$ we may assume that $\nu_k(A_0)<\alpha$ for each $k$. Since $\nu^2_k\ll \mu$ we can further assume (by considering a subsequence of $(A_i)_{i}$ if necessary) that $\nu^2_k(A_k)<\delta$ for each $k$.\\\\
Let $(\delta_i)_{i}$ be a sequence of strictly positive reals such 
that $\sum_{i} \delta_i=  \delta$. Let $k_0 = 0$ and $n_0>N$ (so that $\nu^0(B'_{n_0}\setminus B_{N}')<\delta$). 
Let $Z_0 = A_0$. We proceed by induction. Suppose that we have constructed $(F_l)_{l < i}$, $(Z_l)_{l\leq i}$, $(k_l)_{l\leq i}$, $(n_l)_{l\leq i}$ satisfying conditions (a)--(h) and such that
\begin{itemize}
 \item $\nu(F_j) <\delta_j$, for each $j<i$;
 \item $Z_j = A_{k_j}\setminus \bigcup_{l<j} F_l$, for each $j\leq i$.
\end{itemize}
Observe that \[ \nu(Z_i) = \nu({A}_{k_i}\setminus \bigcup_{j< i}F_j)>{\epsilon} -\sum_{j< i}\delta_j > {\epsilon}
- \delta \] so we can find $M$ such that 
\begin{equation}\label{ph7}
\nu_k(Z_{i})>{\epsilon}-\delta
\end{equation}
for each $k\geq M$.\\\\
Since for each $k$ we have $\nu^2_k(A_k) < \delta$ it follows that 
\begin{equation}\label{ph8}
\nu^2_k(Z_i\setminus A_k) > \nu^2_k(Z_i)-\delta.
\end{equation}
Since $n_i>N$ we can find $k_{i+1}>\max\{M,k_i\}$ such that for all $k\geq k_{i+1}$ we have
\begin{equation}\label{pp3}
\nu^0_k(Z_i\setminus B'_{n_i}) > \nu^0_k(Z_i)-\delta.
\end{equation}
Let $F_{i}\sub Z_i$ be a clopen such that 
\begin{equation}\label{ph4}
\nu^1_{k_{i+1}}(F_{i}\cap R')\geq \nu^1_{k_{i+1}}(Z_i)-\delta
\end{equation}
and $\nu(F_{i})<\delta_{i}$. We can do this since $\nu^1_{k_{i+1}}$ is purely atomic and $\nu$ is non-atomic.\\\\
Now set $Z_{i+1} = A_{k_{i+1}} \setminus \bigcup_{j\leq i} F_j$. By (\ref{pp3}) and the fact that $\nu^0_{k_{i+1}}\in L_0$, there is $n_{i+1}>n_i$ such that 
\begin{equation}\label{ph5}
 \nu^0_{k_{i+1}}\left(Z_{i} \cap (B'_{n_{i+1}}\setminus B'_{n_i})\right)>\nu^0_{k_{i+1}}(Z_{i})-\delta
\end{equation}
and
\begin{equation}\label{ph6}
\mu_n(Z_{{i+1}})<\frac{1}{i+2} 
\end{equation}
for each $n>n_{i+1}$. Now proceed by induction.\\\\
Finally, conditions (a), (b) and (d) are clear, (\ref{ph6}) implies (c), (\ref{ph5}) implies (e), (\ref{ph4}) implies (f), (\ref{ph7}) implies (h), and (\ref{ph8}) together with $Z_{i+1}\sub
A_{k_{i+1}}$ and the fact that $\nu^2_{k_{i+1}}(R')=1$ implies (g).
\end{proof}

\noindent
We finish this section with two open problems. The first one is obvious. 
\begin{prob}
Can we improve Theorem \ref{mainfromone} to show that for each $1\leq \alpha <\omega_1$ we have $S(K^{(\alpha)}) = S_\alpha(K^{(\alpha)})$?
\end{prob}
\noindent
With regards to Theorem \ref{grzes} we have the following.
\begin{prob}
Is it provable in $\ZFC$ that for each $\alpha\leq \om_1$ there is a compact $K$ such that $S_\alpha(K) = P(K)$ with
$$
S_\alpha(K)\setminus \bigcup_{\beta<\alpha}S_\beta(K) \neq \emptyset.
$$
\end{prob}
\noindent
Of course, one can replace the space of probability measures by any topological space and consider the corresponding sequential hierarchy there. Indeed, given a topological space $K$ and a set $A\subseteq K$ we let $\mathsf{Seq}_0(A) = A$. If $\alpha\leq \omega_1$ and for each $\beta<\alpha$ we have defined $\mathsf{Seq}_\beta(A)$ then we let 
$$
\mathsf{Seq}_\beta(A) = \{\lim_n x_n\colon (\forall n)(\exists \beta<\alpha)(x_n\in \mathsf{Seq}_\beta(A))\}.
$$
It is worth recalling here that in \cite[Section 12]{OpenProblems} it is asked if in $\ZFC$ there is a sequentially compact space such that
$$
\sup_{A\subseteq K}\min\{\alpha:\mathsf{Seq}_\alpha(A) = \mathsf{Seq}_{\omega_1}(A)\}\geq 2.
$$
\section{Acknowledgements}
\noindent
Some of the results in this article were obtained during a visit to the Mathematical Institute at the University of Wroc\l aw by the second author. The second author wishes to thank the Mathematical Institute at Wroc\l aw for their hospitality and acknowledge the support of the INFTY Research Networking Program (European Science Foundation) via grant 3548. The authors would like to thank Grzegorz Plebanek for his stimulating discussion and valuable remarks. The authors would also like to thank the anonymous referee for pointing out a significant gap in their arguments in a previous version of this article, and for his helpful comments.

\nocite{Fremlin-Cp}
\bibliographystyle{alpha}
\bibliography{fbib}

\end{document}